\definecolor{darkblue}{rgb}{0,0,0.6}
\definecolor{darkgreen}{rgb}{0.004, 0.296, 0.125}
\newcommand{\bbF}{\mathbb F}
\newcommand{\bbZ}{\mathbb Z}
\DeclareMathOperator{\Sq}{Sq}
\DeclareMathOperator{\SO}{SO}
\DeclareMathOperator{\GL}{GL}
\DeclareMathOperator{\SL}{SL}
\DeclareMathOperator{\Qd}{Qd}
\DeclareMathOperator{\Ann}{Ann}
\DeclareMathOperator{\Res}{Res}
\DeclareMathOperator{\Rey}{\mathcal R}
\DeclareMathOperator{\chara}{char}
\numberwithin{equation}{section}
\newtheorem{theorem}[equation]{Theorem}
\newtheorem{corollary}[equation]{Corollary}
\newtheorem{lemma}[equation]{Lemma}
\newtheorem{proposition}[equation]{Proposition}
\theoremstyle{definition}
\newtheorem{definition}[equation]{Definition}
\newtheorem{question}[equation]{Question}
\newtheorem{remark}[equation]{Remark}
\title[Steenrod closed parameter ideals in the cohomology of $A_4$]{Steenrod closed parameter ideals in the mod-$2$ cohomology of $A_4$ and $\SO(3)$}
\author{Henrik Rüping}
\author{Marc Stephan}
\author{Erg{\" u}n Yal{\c c}{\i }n}
\address{Continentale Krankenversicherung a.G., Ruhrallee 94, 44139 Dortmund, Germany}
\email{henrikrueping@googlemail.com}
\address{Faculty of Mathematics, Bielefeld University, PO Box 100 131, 33501 Bielefeld, Germany}
\email{marc.stephan@math.uni-bielefeld.de}
\address{Department of Mathematics, Bilkent University, 06800 
Bilkent, Ankara, Turkey}
\email{yalcine@fen.bilkent.edu.tr}
\date{\today}
\keywords{Free actions on products of spheres, Steenrod closed parameter ideals, Dickson algebra}
\subjclass{Primary  55M35; Secondary 13C05, 55S10, 57S17, 20J06}
\begin{document}

\begin{abstract}
In this paper, we classify the parameter ideals in $H^*(BA_4;\bbF_2)$ and in the Dickson algebra $H^*(B\SO(3);\bbF_2)$ that are closed under Steenrod operations. Consequently, we obtain restrictions on the dimensions $n,m$ for which $A_4$ (and $\SO(3)$) can act freely on $S^n\times S^m$.
\end{abstract}
\maketitle

\section{Introduction}
It is a classical problem to classify all finite groups which can act freely on a sphere $S^n$. This problem is still open for some dimensions $n$. If we allow the dimension of the sphere to vary, then the problem is solved with the works of Smith, Swan, Milnor, and Madsen-Thomas-Wall (see \cite{hambleton2015} for a survey). It is proved that a finite group $G$ acts freely on $S^n$ for some $n\geq 1$ if and only if all the subgroups of $G$ with order $p^2$ and $2p$ are cyclic for all primes $p$. The first condition that every subgroup of order $p^2$ is cyclic is equivalent to the condition that the cohomology of the group $G$ in $\bbF_p$-coefficients is periodic (see \cite[Definition~6.1]{adem2013cohomology} for a definition). The dimension of the sphere $S^n$ depends on the periodicity of the group cohomology $H^*(BG; \bbZ)$, however there are also other obstructions which can make the dimension bigger than the periodicity. It requires delicate computations involving number theory to find the exact dimensions of spheres on which a finite group with periodic cohomology can act freely (see \cite{hambleton2015}).

For every prime $p$, the \emph{$p$-rank} of a finite group $G$ is defined to be the maximum integer $s$ such that $(\bbZ/p)^s$ injects into $G$, and the \emph{rank} of $G$ is the maximum of its $p$-ranks over all primes $p$ dividing the order of $G$. It is conjectured by Benson and Carlson \cite{bensoncarlson1987} that any rank $r$ group acts freely on a finite CW-complex homotopy equivalent to a product of $r$ spheres $S^{n_1}\times \cdots \times S^{n_r}$ for some $n_1, \ldots, n_r \geq 1$.
Although this conjecture is still open in full generality, some results are known for the existence of such actions. In particular, any rank $2$ finite group which does not involve $\Qd(p)=(\mathbb Z /p)^2 \rtimes \SL_2(p)$ for any odd prime $p$ admits such actions (see \cite{jackson2007qd} and \cite{adem2001periodic}).

If a finite group $G$ is known to act freely on a product of two spheres $S^n\times S^m$, it is interesting to ask which dimensions $n$ and $m$ are possible for such actions. In particular, one can ask if every rank $2$ group acts freely on $S^n \times S^n$ for some $n$. This question was answered negatively by Oliver in \cite{Oliver} where he proved that the alternating group on $4$ elements $A_4$ can not act freely on a finite CW-complex $X$ with $H^* (X; \bbZ )\cong H^*(S^n \times S^n; \bbZ)$. Oliver proved this result by considering ideals in $H^* (BA_4; \bbF_2)$ which are closed under Steenrod operations. The cohomology ring $$H^* (BA_4; \bbF_2)\cong H^* (B(\bbZ/2 \times \bbZ/2); \bbF_2 )^{C_3}$$ is isomorphic to the algebra $\bbF_2[u,v,w]/\langle u^3+v^2+vw+w^2\rangle,$ where $u$ has degree $2$ and $v,w$ have degree $3$ (see \cref{thm:AdemMilgram}). Oliver proved the following:

\begin{theorem}[{\cite[Lemma~1]{Oliver}}]\label{thm:oliver} 
Let $I\subset H^*(BA_4;\bbF_2)$ be a nonzero ideal generated by homogeneous elements of the same degree $i$. If $I$ is closed under the Steenrod operations, then $I=\langle v^k\rangle$ for $k=i/3$.
\end{theorem}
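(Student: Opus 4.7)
The plan is to use the basic principle that an ideal $I \subseteq H^*(BA_4;\bbF_2)$ generated in a single degree $i$ satisfies $I_{i+j} = I_i \cdot H^j$, so Steenrod closure $\Sq^j I_i \subseteq I_{i+j}$ becomes a very restrictive linear-algebra condition on $I_i \subseteq H^i$. As input I need the Steenrod action on the generators, which I get via the identification $H^*(BA_4;\bbF_2) \cong \bbF_2[x,y]^{C_3}$; choosing $u = x^2+xy+y^2$, $v = x^2y+xy^2$ and $w = x^3 + x^2 y + y^3$, a direct calculation gives
\[
\Sq^1 u = v,\quad \Sq^1 v = 0,\quad \Sq^1 w = u^2,\quad \Sq^2 v = uv,\quad \Sq^2 w = u(v+w),
\]
together with $\Sq^2 u = u^2$, $\Sq^3 v = v^2$ and $\Sq^3 w = w^2$. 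By Cartan this determines $\Sq^n$ on the normal-form monomial basis $\{u^a v^b w^c : a \le 2,\ 2a + 3(b+c) = d\}$ of $H^d(BA_4;\bbF_2)$ for every $d$.

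The first reduction uses $\Sq^1$. Since $A_4^{\mathrm{ab}} = C_3$ gives $H^1(BA_4;\bbF_2) = 0$, one has $I_{i+1} = I_i \cdot H^1 = 0$, so closure forces $\Sq^1 f = 0$ for every $f \in I_i$. The derivation property of $\Sq^1$ together with the values above makes it straightforward to describe $\ker(\Sq^1|_{H^i})$ in each residue class of $i \bmod 3$: in degree $i = 3k$ (basis $\{v^b w^{k-b}\}_{b=0}^{k}$) one gets
\[
\ker(\Sq^1|_{H^{3k}}) = \mathrm{span}\{v^{k-2m} w^{2m} : 0 \le 2m \le k\},
\]
while in the other two residues every basis element has a factor of $u$ or $u^2$ and most are not killed by $\Sq^1$.

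For $i \not\equiv 0 \pmod 3$ a second round of closure finishes the argument: $\Sq^2 I_i \subseteq I_{i+2} = H^2 \cdot I_i$, and using $\Sq^2 u^2 = v^2$ together with the defining relation $u^3 = v^2 + vw + w^2$ one produces an unwanted $vw$-term that cannot live in an ideal generated in such a degree, so $i$ must be a multiple of $3$. For $i = 3k$ I note that multiplication by $u$ is a bijection $H^{3k} \to H^{3k+2}$ (both are $(k+1)$-dimensional and it sends basis to basis), so $I_{3k+2} = u \cdot I_{3k}$ and $\Sq^2$-closure is equivalent to the stability of $I_{3k}$ under the linear operator $D := u^{-1}\Sq^2 \colon H^{3k} \to H^{3k}$. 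Writing $f = \sum_m \alpha_m v^{k-2m} w^{2m} \in \ker\Sq^1$ and expanding $Df$ via Cartan gives cancellation conditions that kill $\alpha_m$ for $m$ odd; iterating the argument with the analogous operator $u^{-2}\Sq^4$ (built from the bijection $u^2 \colon H^{3k} \to H^{3k+4}$) refines this to $m \equiv 0 \pmod 4$, and continuing cuts the surviving index down to $m = 0$, so $I_{3k} = \bbF_2 \cdot v^k$ and $I = \langle v^k \rangle$. Conversely $\langle v^k \rangle$ is Steenrod-closed because $\Sq v = v(1 + u + v)$ yields $\Sq v^k = v^k (1+u+v)^k$, whose terms are all divisible by $v^k$.

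The main obstacle is the iterative refinement in the last step: each new operator introduces coefficients involving binomials $\binom{k - 2m}{2^j}$ modulo $2$ (whose parity is controlled by Lucas' theorem), and one has to check that no accidental cancellations prevent the successive halvings of the surviving set of $m$. I expect the cleanest finish is a strong induction on $k$ with the base cases $k = 1, 2$ verified by hand, or alternatively a direct proof of the stronger divisibility statement that any $f \in H^{3k}$ with $f \mid \Sq^n f$ for every $n \ge 0$ is a scalar multiple of $v^k$; the latter handles singly- and multiply-generated ideals uniformly via the fact that $H^*(BA_4;\bbF_2) \hookrightarrow \bbF_2[x,y]$ is a domain.
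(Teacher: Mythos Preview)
The paper does not prove this theorem; it is quoted as \cite[Lemma~1]{Oliver} and used as input. So there is no in-paper proof to compare against directly.

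Your outline has a genuine gap precisely where you flag it. The first two steps of the iteration go through: the operator $D=u^{-1}\Sq^2$ on $H^{3k}$ is well-defined because multiplication by $u$ is a bijection $H^{3k}\to H^{3k+2}$, and one checks that in $D(v^{k-2m}w^{2m})$ the monomial $v^{k-2m+1}w^{2m-1}$ (which lies outside $\ker\Sq^1$) appears with coefficient $m$, forcing $\alpha_m=0$ for $m$ odd; the analogous computation for $u^{-2}\Sq^4$ forces $4\mid m$. But for the next step you would need $u^{-4}\Sq^8$, and multiplication by $u^4\colon H^{3k}\to H^{3k+8}$ is \emph{not} a bijection (the target has dimension $k+3$, not $k+1$), so the operator you want does not exist. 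One can still extract constraints from $\Sq^8 I_{3k}\subseteq H^8\cdot I_{3k}$, but $H^8$ is now three-dimensional and the bookkeeping becomes unpleasant. Your treatment of the residues $i\not\equiv 0\pmod 3$ is also only a gesture; the sentence about an ``unwanted $vw$-term'' does not pin down which element produces it or why it cannot be cancelled.

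A much cleaner route --- and the one actually taken in Oliver's paper, using tools that reappear as \cref{lem:General} and \cref{lem:EvenEvenCase} here --- is the strong induction on $i$ that you allude to at the end, but organised via the \emph{structural} description of $\ker\Sq^1$ rather than the monomial one. From $H^1=0$ one gets $\Sq^1 X=0$ for all $X\in I_i$; \cref{lem:General} then gives $X=x^2$ (if $i$ is even) or $X=vx^2$ (if $i$ is odd) with $x\in H^*(BA_4)$. In the even case the square roots generate an ideal $J$ in degree $i/2$ with $I=J^{[2]}$, and \cref{lem:EvenEvenCase} shows $J$ is again Steenrod closed; induction applies. In the odd case one factors out $v$ (using that $\Sq(v)=v(1+u+v)$ and that $H^*(BA_4)$ is a domain, so the factor $v$ can be cancelled from $\Sq(vx^2)\in I$) to obtain a Steenrod closed ideal $I'$ generated by squares in degree $i-3$, and then proceeds as before. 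Degrees not divisible by $3$ are eliminated automatically: the induction hypothesis forces the smaller ideal to be zero, hence $I=0$. This packaging halves the degree at each step and avoids any analysis of $\Sq^{2^j}$ for $j\ge 2$.
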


If there were a free $A_4$-action on a finite CW-complex homotopy equivalent to a product of spheres $S^n \times S^n$, then the kernel of the map $H^*(BA_4;\bbF_2)\rightarrow H^*(X/A_4;\bbF_2)$ would be such an ideal $I$ for $i=n+1$ and $H^*(BA_4; \bbF_2)/I$ would be finite over $\bbF_2$. But for $k>0$, the ring $H^*(BA_4; \bbF_2)/\langle v^k \rangle$ is not finite. This argument holds also for finite CW-complexes with mod-$2$ cohomology of $S^n\times S^n$ by \cref{thm:nofreeactionequidim}.

In this paper we look at the question of Blaszczyk from \cite[Section~4]{blaszczyk2013free}: which dimensions $n$ and $m$ are possible if $G=A_4$ acts freely on a finite CW-complex $X\simeq S^n \times S^m$ for $n\neq m$? To avoid trivial situations, we assume $n, m\geq 1$. 

We show in \cref{pro:ForA_4F_2} that if such an action exists then the kernel of the homomorphism $H^*(BA_4;\bbF_2)\rightarrow H^*(X/A_4;\bbF_2)$ is a Steenrod closed ideal $I$ in $H^*(BA_4;\bbF_2)$ with finite quotient, generated by the two $k$-invariants of the action. These $k$-invariants have degrees $m+1$ and $n+1$.

Our main result classifies the Steenrod closed parameter ideals in $H^*(BA_4;\bbF_2)$, i.e., the Steenrod closed ideals $I$ generated by two homogeneous elements such that $H^*(BA_4; \bbF_2)/I$ is finite. These can be grouped into three cases. 

\begin{theorem}[\cref{thm:classification_steenrodclosedparameterideals}, \cref{pro:uniquenessindegrees}]\label{thm:Intro-classification_steenrodclosedparameterideals}  The set of Steenrod closed parameter ideals in $H^*(BA_4;\bbF_2)$ is the disjoint union of 
\begin{enumerate}
\item\label{thm:Intro-fibered} the \emph{fibered ideals} $\langle v^k,u^l\rangle$ with $l\ge 1$ and $1\le k\le 2^t$, where $2^t$ is the largest power of $2$ dividing $l$;
\item\label{item:Intro-twisted} the \emph{twisted ideals} $\langle x_n,\Sq^1(x_n)\rangle$ for $n\ge 2$, where $x_n$ is recursively defined as 
$x_1=u$ and $x_{n+1} = ux_n^2+\Sq^1(x_n)^2$;
\item\label{item:Intro-mixed} and \emph{the mixed ideals} $\langle v^ix_n^{2^m},x_{n+1}^{2^{m-1}}\rangle$, where $m\ge 1$, and either
$n=1$ and $1\le i <2^{m-1}$, or $n \ge 2$ and $0\le i<2^{m-1}$.
\end{enumerate}
Moreover, for a given pair of natural numbers  there is at most one Steenrod closed parameter ideal $I$ with parameters of these degrees.
\end{theorem}
To prove \cref{thm:Intro-classification_steenrodclosedparameterideals}, we first consider the twisted ideals, i.e., the Steenrod closed parameter ideals of the form $\langle X, \Sq^1(X)\rangle$. We prove in 
\cref{thm:classification_twisted_pairs} that all such $X$ can be computed recursively as in \cref{thm:Intro-classification_steenrodclosedparameterideals}\eqref{item:Intro-twisted}. In order to show that every twisted ideal is of this form we use Kameko maps (see \cref{def:kameko}) similarly to Oliver's proof of \cref{thm:oliver}.

The nontwisted ideals can be constructed from ideals with parameters of smaller degrees as explained in \cref{sec:obsNonTwisted}.

Next, there is the fibered case, where $I$ has a system of parameters, such that one of the parameters generates a Steenrod closed ideal. The classification of the fibered ideals is in \cref{thm:classification_fibered} extending work of Meyer and Smith \cite{meyersmith2003}.

Finally, we consider the mixed case. These are all other Steenrod closed parameter ideals. They can be obtained by applying certain operations to the ideals in the twisted case; see \cref{sect:MixedCase}.

Similar to Oliver's result, we obtain the following obstruction for the existence of free actions on a product of spheres:

\begin{theorem}[\cref{thm:ObstructionskInvariants}, \cref{thm:ObstructionskInvariants_integral}]\label{cor:Intro-Topological}  
Let $R=\bbZ$ or $\bbF_2$. Suppose that $G=A_4$ acts freely on a finite CW-complex $X$ such that $H^*(X; R) \cong H^* (S^n\times S^m; R)$ with $1\leq n < m$. Then 
the classifying map induces a surjection $H^* (BG; \bbF_2) \to H^* (X/G ; \bbF_2)$ whose kernel $J$ is a Steenrod closed parameter ideal generated by the $k$-invariants of the spheres. Moreover, the following holds:
\begin{enumerate}
\item If $R=\bbF_2$, then the ideal $J$ must be in one of the three types listed in \cref{thm:Intro-classification_steenrodclosedparameterideals}. \item  If $R=\bbZ$, then the ideal $J$ must be in one of the three types listed in \cref{thm:Intro-classification_steenrodclosedparameterideals} except the twisted ones listed in \eqref{item:Intro-twisted}.
\end{enumerate}
\end{theorem}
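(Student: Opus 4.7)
The plan splits cleanly into three moves: identify the kernel $J$ via the Borel spectral sequence of the free action, apply the classification of Steenrod closed parameter ideals, and then use the Bockstein to eliminate twisted ideals when $R=\bbZ$.

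For the common preamble, I would work with the Borel fibration $X\to X/G\to BG$ and its mod-$2$ Serre spectral sequence. The $E_2$-page is $H^*(BG;\bbF_2)\otimes\bbF_2[\alpha_n,\alpha_m]/(\alpha_n^2,\alpha_m^2)$; the transgressions yield $k$-invariants $k_n\in H^{n+1}(BG;\bbF_2)$ and $k_m\in H^{m+1}(BG;\bbF_2)$, and the Leibniz rule then kills $\alpha_n\alpha_m$. Finiteness of $X/G$ forces $H^*(X/G;\bbF_2)$ to be finite, and together with the sparsity and convergence arguments carried out in \cref{pro:ForA_4F_2} this makes the edge map $H^*(BG;\bbF_2)\to H^*(X/G;\bbF_2)$ a surjection with kernel $J=\langle k_n,k_m\rangle$. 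Naturality of the Steenrod operations makes $J$ closed under the Steenrod algebra, and the finite quotient makes it a parameter ideal, so \cref{thm:Intro-classification_steenrodclosedparameterideals} yields part~(1).

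For part~(2), the integral hypothesis lifts each $k$-invariant to an integral transgression of an integral fundamental class. For any $k\in H^*(BG;\bbF_2)$ that lifts to an integral class $\tilde k$, exactness of the Bockstein sequence associated to $0\to\bbZ\xrightarrow{2}\bbZ\to\bbF_2\to 0$ gives $\tilde\beta(k)=0$, and since $\Sq^1=\rho\circ\tilde\beta$ we get $\Sq^1(k_n)=\Sq^1(k_m)=0$. Suppose for contradiction that $J=\langle x_N,\Sq^1(x_N)\rangle$ is twisted with $N\geq 2$, and set $d=|x_N|$. Using that the abelianization of $A_4$ is $C_3$, we have $H^1(BA_4;\bbF_2)=0$, so the ideal $\langle x_N\rangle$ vanishes in degree $d+1$ and $\Sq^1(x_N)\notin\langle x_N\rangle$; thus $x_N,\Sq^1(x_N)$ is a minimal generating system and $J_d=\bbF_2\cdot x_N$. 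Any pair of generators of $J$ must then contain one non-zero element in each of degrees $d$ and $d+1$; since $n<m$ and both $k_n,k_m$ are non-zero (else $J$ would be principal, incompatible with $H^*(BA_4;\bbF_2)$ having Krull dimension $2$ and finite quotient), we conclude $|k_n|=d$ and therefore $k_n=x_N$. Then $\Sq^1(k_n)=\Sq^1(x_N)\neq 0$, contradicting $\Sq^1(k_n)=0$, so $J$ cannot be twisted.

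The main technical obstacle is the spectral sequence identification of $J$: one must verify that no later differentials contribute to the bottom row and that the non-zero fiber cohomology is killed in $E_\infty$, which is routine given the sparsity of $H^*(S^n\times S^m;\bbF_2)$ and is already established in \cref{pro:ForA_4F_2}. Once the algebraic shape of $J$ is in hand, the Bockstein argument for part~(2) is short and hinges on the single arithmetic input $H^1(BA_4;\bbF_2)=0$.
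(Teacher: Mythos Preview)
Your proposal is correct and follows essentially the same approach as the paper. For part~(1) you invoke \cref{pro:ForA_4F_2} exactly as the paper does, and for part~(2) both you and the paper use the integral lift together with the Bockstein to force $\Sq^1$ of the lower $k$-invariant to vanish, then observe this is incompatible with the twisted case. The only minor variation is that the paper applies the Bockstein to the fiber class $t_1$ and transports the conclusion to $\mu_1$ via Kudo's transgression theorem (\cref{pro:Steenrod}), whereas you lift $\mu_1$ itself to an integral class by naturality of the Serre spectral sequence under coefficient change; both routes yield $\Sq^1(\mu_1)\equiv 0\pmod{\langle\mu_1\rangle}$ (in your version even $\Sq^1(\mu_1)=0$ on the nose), and the remaining degree argument using $H^1(BA_4;\bbF_2)=0$ and \cref{lem:lowergeneratorunique} is the same.
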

Since there is only one Steenrod closed parameter ideal with parameters in given degrees, the cohomology of the quotient space $H^*(X/G;\bbF_2)$ in \cref{cor:Intro-Topological} only depends on the degrees $n$ and $m$.

The classification of the Steenrod closed parameter ideals provides restrictions for the degrees $m+1,n+1$ of the $k$-invariants. All possible degrees for parameters are listed in \cref{cor:DimensionsFreeAction}. These provide restrictions for dimensions $m$, $n$ such that $S^n\times S^m$ admits a free $A_4$-action. Since every finite simple group of rank $2$ contains $A_4$, this gives restrictions for free actions on products of two spheres for any finite simple group of rank $2$.

As summarized in \cite{blaszczyk2013free}, the previously known cases of products of spheres on which $A_4$ can not act freely are $S^n\times S^n$ by \cite{Oliver}, $S^1\times S^n$ by \cite[Proposition~4.3]{blaszczyk2013free}, and $S^{2n}\times S^{2m}$ since the order of $A_4$ divides the Euler characteristic of the product. This eliminates only about $1/4$ of the possible dimensions. We eliminate almost all dimensions in the following sense:

\begin{theorem}[{\cref{cor:percentage}}] For $r\geq 1$, the percentage of those pairs $(n,m)$ with $n,m\leq r$ such that there exists a finite, free $A_4$-CW-complex $X\simeq S^n\times S^m$ tends to zero as $r\to \infty$.
\end{theorem}

Using fixity methods from \cite{ADU2004}, \cite{UY2010}, and the methods due to Adem-Smith introduced in \cite{adem2001periodic}, one can realize some of the fibered ideals by a finite, free $A_4$-CW-complex $X\simeq S^n\times S^m$. We do not know if all fibered ideals are realizable. We do not know either if any of the mixed ideals is realizable.

\begin{question} Are the mixed ideals given in \cref{thm:Intro-classification_steenrodclosedparameterideals} realizable by a finite, free $A_4$-CW-complex $X$ homotopy equivalent to $S^n \times S^m$? 
\end{question}

The classification given in  \cref{thm:Intro-classification_steenrodclosedparameterideals} has implications for the Dickson algebra $D(2) = H^*(B\SO(3);\bbF_2)\cong \bbF_2[a,b]^{\GL_2 (2)}\cong \bbF_2[u,v]$ which is a subalgebra of $H^*(BA_4; \bbF_2)$. Meyer and Smith classified in \cite[Theorem~V.1.1]{meyersmith2003} all Steenrod closed parameter ideals in the Dickson algebra $D(2)$ generated by powers of $u$ and $v$.
We show in \cref{cor:classification_SteenrodclosedParameterIdeal_SO3} that restriction of ideals from $H^*(BA_4; \bbF_2)$ to $H^*(B\SO(3); \bbF_2)$ induces a bijection on Steenrod closed parameter ideals. Hence \cref{thm:Intro-classification_steenrodclosedparameterideals} provides a full classification of the Steenrod closed parameter ideals in the Dickson algebra $D(2)$. Since $A_4$ is a subgroup of $\SO(3)$, the restrictions for free $A_4$-actions on products of spheres also apply to $\SO(3)$.

\subsection*{Convention} By an action of a finite group $G$ on a CW-complex $X$, we mean that the $G$-space $X$ admits the structure of a $G$-CW-complex.

\subsection*{Acknowledgments} It is our pleasure to thank Dave Benson, Bob Oliver, and Kate Ponto for helpful discussions and providing references. 
We thank the referee for careful reading of the paper and for suggested improvements.
 
\section{Recollection of invariant theory}\label{sect:invariantTheory}

We are interested in the mod-$2$ cohomology rings of the groups  $\SO(3)$, 
$A_4=(\bbZ/2\times \bbZ/2)\rtimes C_3$, and $\bbZ/2 \times \bbZ/2$. 
We denote the mod-$2$ cohomology ring of the group $G$ by $H^*(BG)$. 
The group cohomology of $\bbZ/2\times \bbZ/2$ in $\bbF_2$-coefficients 
is the polynomial ring $\bbF_2[a,b]$ 
and the other two are the invariant rings  
\begin{align*}
    H^*(B\SO(3)) &\cong \bbF_2[a,b]^{\GL_2(2)}, \\
    H^*(BA_4) & \cong \bbF_2[a,b]^{C_3},
\end{align*}
where a fixed generator of the cyclic group $C_3$ of order $3$ acts on the vector space $\bbF_2a \oplus \bbF_2 b$ of homogeneous polynomials of degree $1$ by $a\mapsto b$, $b\mapsto a+b$. The mod-$2$ cohomology of the group $A_4$ can be described as an algebra as follows:  
 
\begin{theorem}[{\cite[Chapter~III, Theorem~1.3]{adem2013cohomology}}]\label{thm:AdemMilgram} We have
\[H^*(BA_4)\cong \bbF_2 [u,v,w]/\langle u^3+v^2+vw+w^2 \rangle \]
where $\deg (u)=2$ and $\deg(v)=\deg(w)=3$. Under the isomorphisms
$$H^* (BA_4) \cong H^*(B(\bbZ/2)^2)^{C_3} \cong \bbF_2 [a, b]^{C_3}$$
the generators correspond to
\begin{align*}
    u&= a^2+ab+b^2\\
    v&= a^2b+ab^2=ab(a+b)\\
    w&= a^3+a^2b+b^3.
\end{align*}
\end{theorem}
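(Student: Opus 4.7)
The plan is to identify $H^*(BA_4;\bbF_2)$ with an invariant ring of a polynomial ring in two variables, then analyze that invariant ring by explicit computation combined with a Hilbert series comparison.

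First, I would reduce to invariant theory. The subgroup $V:=(\bbZ/2)^2$ is a normal Sylow $2$-subgroup of $A_4$ of index $3$. Since $3$ is invertible in $\bbF_2$, the Lyndon--Hochschild--Serre spectral sequence for $1\to V\to A_4\to C_3\to 1$ degenerates onto the column $p=0$ (because $H^{>0}(BC_3;\bbF_2)=0$), yielding
\[
H^*(BA_4;\bbF_2)\cong H^*(BV;\bbF_2)^{C_3}=\bbF_2[a,b]^{C_3}.
\]
A generator of $C_3\subset A_4$ permutes the nontrivial elements of $V$ cyclically, and passing to dual generators in $H^1$ one sees that the induced action is $a\mapsto b$, $b\mapsto a+b$.

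Next I would exhibit the generators and the relation. Since the $C_3$-orbit of $a$ is $\{a,b,a+b\}$, its second and third elementary symmetric functions are automatically invariant in characteristic $2$: $u=a^2+ab+b^2$ and $v=a^2b+ab^2$. A direct check shows $w=a^3+a^2b+b^3$ is also invariant, and the relation $u^3=v^2+vw+w^2$ follows from expanding both sides using the freshman's dream $(x+y)^2=x^2+y^2$; each side reduces to $a^6+a^5b+a^3b^3+ab^5+b^6$. This produces a well-defined graded algebra homomorphism
\[
\varphi\colon A:=\bbF_2[u,v,w]/\langle u^3+v^2+vw+w^2\rangle \longrightarrow \bbF_2[a,b]^{C_3}.
\]

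To prove $\varphi$ is an isomorphism, I would first observe that the defining relation is monic of degree $2$ in $w$, so $A$ is free of rank $2$ over $\bbF_2[u,v]$ with basis $\{1,w\}$; every $x\in A$ has a unique expression $x=P+Qw$ with $P,Q\in\bbF_2[u,v]$. If $\varphi(x)=0$, apply the transposition $\sigma\colon a\leftrightarrow b$, which fixes $u,v$ and sends $w$ to $w+v$ (by a short expansion). Subtracting $\varphi(x)$ from $\sigma\varphi(x)$ yields $Qv=0$ in the domain $\bbF_2[a,b]$; since $v\neq 0$ and $u,v$ are algebraically independent (Dickson's theorem for $\GL_2(\bbF_2)$), this forces $Q=0$ and hence $P=0$, so $\varphi$ is injective. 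For surjectivity, compare Hilbert series: Molien's formula applies (after base-changing to $\bbF_4$, where the $C_3$-action diagonalizes with eigenvalues the primitive cube roots of unity) and gives
\[
\mathrm{Hilb}(\bbF_2[a,b]^{C_3};t)=\tfrac{1}{3}\Bigl(\tfrac{1}{(1-t)^2}+\tfrac{2}{1+t+t^2}\Bigr)=\tfrac{1-t+t^2}{(1-t)(1-t^3)},
\]
while the rank-$2$ free $\bbF_2[u,v]$-module structure on $A$, with basis in degrees $0$ and $3$, yields $\mathrm{Hilb}(A;t)=(1+t^3)/\bigl((1-t^2)(1-t^3)\bigr)$, which simplifies to the same expression. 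Together with injectivity, this forces $\varphi$ to be a graded isomorphism.

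The main obstacle is the passage from the pointwise invariants $u,v,w$ to a complete description of the invariant ring: one must simultaneously identify the correct normal form in $A$ (the monicity-in-$w$ trick) and compute the Hilbert series of the target (Molien's formula in the non-modular setting). Both ingredients are standard, but they are the linchpin that turns the explicit generators and relation into an isomorphism statement; after they are in place, the $\sigma$-trick for injectivity is completely concrete.
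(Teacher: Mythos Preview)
The paper does not supply its own proof of this theorem: it is stated as a citation of \cite[Chapter~III, Theorem~1.3]{adem2013cohomology} and used as background input. So there is no proof in the paper to compare against.

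Your argument is correct and self-contained. The reduction $H^*(BA_4;\bbF_2)\cong\bbF_2[a,b]^{C_3}$ via the collapse of the Lyndon--Hochschild--Serre spectral sequence (using $H^{>0}(BC_3;\bbF_2)=0$) is standard and matches what the paper records just before the theorem statement. The verification of the relation $u^3=v^2+vw+w^2$ is a straightforward expansion. Your injectivity argument is clean: writing $A$ as a free $\bbF_2[u,v]$-module on $\{1,w\}$ and exploiting the extra symmetry $\sigma\colon a\leftrightarrow b$ (which fixes $u,v$ and sends $w\mapsto w+v$) to kill the $w$-coefficient is exactly the kind of trick the Adem--Milgram proof uses. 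The Hilbert series comparison via Molien's formula (legitimate here since $|C_3|$ is invertible in $\bbF_2$, so one may pass to $\bbF_4$ to diagonalize) is also correct, and your two series do agree after the simplification $1+t^3=(1+t)(1-t+t^2)$.

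One small remark: for surjectivity you do not strictly need Molien. Once you know $A$ is free of rank $2$ over $\bbF_2[u,v]$ with basis $\{1,w\}$, and that $\bbF_2[a,b]$ is free of rank $6$ over $\bbF_2[u,v]$ (Dickson), the image of $\varphi$ is the $\bbF_2[u,v]$-submodule spanned by $1$ and $w$; checking that $\bbF_2[a,b]^{C_3}$ is spanned over $\bbF_2[u,v]$ by $1$ and $w$ can also be done by hand in low degrees. But the Hilbert series route you chose is perfectly valid and arguably cleaner.
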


The invariant ring $\bbF_2[a,b]^{\GL_2(2)}$ is the \emph{Dickson algebra} $D(2)$ and 
identifies with the polynomial subring $\bbF_2[u,v]\subset H^*(BA_4)$; 
see \cite[\S~III.2]{adem2013cohomology}.
 
From now on let $A$ be a graded, Noetherian commutative algebra over a field $k$ such that $A_0=k$.
 
\begin{definition}\label{def:ParameterIdeal}
 A \emph{parameter ideal} in $A$ is an ideal generated by a system of homogeneous parameters, i.e., by homogeneous elements 
$a_1,\ldots ,a_d$ of degree $>0$ such that $d$ is the Krull dimension of $A$ and $A/\langle a_1,\ldots, a_d\rangle$ 
is finite over $k$. 
\end{definition}

If a finite group $G$ acts on $A$, then the inclusion of invariants $A^G\subset A$ is a finite extension; see \cite[Theorem~2.3.1]{smith1995}. Thus $A^G$ and $A$ have the same Krull dimension. Since $H^*(B(\bbZ/2)^2)\cong \bbF_2[a,b]$ is of Krull dimension $2$, so are the invariant rings $H^*(B\SO(3))$ and  $H^*(BA_4)$. 

The algebra $H^*(BA_4)$ is an invariant ring 
for the group $C_3$ whose order is coprime to the characteristic of the field $\bbF_2$.
If the characteristic of $k$ does not divide the order of $G$, then an application of the Reynolds operator 
$\Rey\colon A\to A^G$ defined by
\[\Rey(a) = \frac{1}{|G|} \sum_{g\in G} ga\]
provides the following result.

\begin{lemma}[{see \cite[Lemma~2.6.10]{derksenkemper2015}}]\label{lem:contractionofideal}
Suppose that $\chara k$ does not divide $|G|$. For any ideal $I\subset A^G$, we have
$AI\cap A^G = I$. In particular $I\subset A^G$ is a parameter ideal if and only if $AI\subset A$ is a parameter ideal.
\end{lemma}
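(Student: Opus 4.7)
The plan is to exploit the Reynolds operator $\Rey\colon A\to A^G$ and its two key properties: it is the identity on $A^G$, and it is $A^G$-linear. The latter follows from the computation $\Rey(ba) = \frac{1}{|G|}\sum_g g(ba) = b\Rey(a)$ for $b\in A^G$, $a\in A$, which only requires that $|G|$ is invertible in $k$.

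The inclusion $I\subset AI\cap A^G$ is immediate, so the content is in the reverse inclusion. Given $x\in AI\cap A^G$, I would write $x=\sum_i a_ib_i$ with $a_i\in A$ and $b_i\in I\subset A^G$, and then apply the Reynolds operator. Since $x\in A^G$ we have $\Rey(x)=x$, and using $A^G$-linearity together with $b_i\in A^G$,
\[
x=\Rey(x)=\sum_i \Rey(a_ib_i)=\sum_i \Rey(a_i)\,b_i\in I,
\]
which gives the desired containment.

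For the ``in particular'' statement, first note that $A^G\subset A$ is a finite ring extension, so $\dim A^G=\dim A=:d$. If $I=\langle a_1,\ldots,a_d\rangle$ in $A^G$ (with the $a_i$ homogeneous of positive degree), then $AI=\langle a_1,\ldots,a_d\rangle$ in $A$, so $AI$ is generated by $d$ elements as well. It remains to check that $A^G/I$ is finite-dimensional over $k$ if and only if $A/AI$ is. The identity $AI\cap A^G=I$ gives an injection $A^G/I\hookrightarrow A/AI$, which handles the ``if'' direction; conversely, since $A$ is a finitely generated $A^G$-module, $A/AI\cong A\otimes_{A^G}A^G/I$ is finitely generated over $A^G/I$, which handles the ``only if'' direction.

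The only subtlety is making sure the $A^G$-linearity of $\Rey$ is invoked correctly; the essential point is that the $b_i$ lie in $A^G$ (because $I\subset A^G$), which is what allows them to be pulled out of $\Rey$. Aside from this, the argument is direct and the main work has already been done by quoting that $A^G\subset A$ is a finite extension of Noetherian algebras of the same Krull dimension.
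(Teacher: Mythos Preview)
Your proof of $AI\cap A^G=I$ via the Reynolds operator is correct and is exactly the approach the paper indicates: it introduces $\Rey$ immediately before the lemma and cites a textbook for the details, so there is no further ``paper's proof'' to compare against.

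One small caveat on the ``in particular'' clause: your argument begins from a presentation $I=\langle a_1,\dots,a_d\rangle$ in $A^G$, which is fine for the forward implication, but for the backward implication (``$AI$ a parameter ideal $\Rightarrow$ $I$ a parameter ideal'') you still owe an explanation of why $I$ can be generated by $d$ homogeneous elements of $A^G$. Showing that $A^G/I$ is finite-dimensional does not by itself bound the number of generators of $I$. In the paper this lemma is only invoked through the identity $AI\cap A^G=I$ (in the proof of \cref{lem:steenrodclosedinextension}), so this does not affect anything downstream, but it is worth being aware of.
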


For the $\GL_2(2)$-action on $\bbF_2[a,b]$, the characteristic of $\bbF_2$ divides the group order. Nevertheless, we have a similar result.

\begin{lemma}\label{lem:contractionofidealinHSO3}
For any ideal $I\subset H^*(B\SO(3))$, we have $$H^*(BA_4)I\cap H^*(B\SO(3))=I.$$
\end{lemma}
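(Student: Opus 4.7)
The plan is to reduce the statement to the observation that $H^*(BA_4)$ is a free module of rank $2$ over $H^*(B\SO(3))$, with a basis containing the element $1$. Once this is established, the conclusion is a general fact about ideals and direct summands.

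First, I would invoke \cref{thm:AdemMilgram} to write
\[
H^*(BA_4) \cong \bbF_2[u,v,w]/\langle u^3 + v^2 + vw + w^2\rangle, \qquad H^*(B\SO(3)) \cong \bbF_2[u,v].
\]
The defining relation, rewritten as $w^2 + vw + (u^3 + v^2) = 0$, is monic of degree $2$ in $w$ over $\bbF_2[u,v]$. Hence $H^*(BA_4)$ is free of rank $2$ as an $H^*(B\SO(3))$-module, with basis $\{1,w\}$; in particular we have a decomposition
\[
H^*(BA_4) = H^*(B\SO(3))\cdot 1 \;\oplus\; H^*(B\SO(3))\cdot w
\]
of $H^*(B\SO(3))$-modules, and the submodule $H^*(B\SO(3))$ is a direct summand.

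Next, given any ideal $I \subset H^*(B\SO(3))$, I would expand a generic element of the extended ideal $H^*(BA_4)\cdot I$. Writing each coefficient in the basis $\{1,w\}$, every element $\alpha \in H^*(BA_4)\cdot I$ takes the form $\alpha = x + wy$ for some $x,y \in I$. Thus
\[
H^*(BA_4)\cdot I \;=\; I \;\oplus\; w\cdot I
\]
as a direct sum of $H^*(B\SO(3))$-submodules. If in addition $\alpha \in H^*(B\SO(3))$, then uniqueness of the decomposition forces $y = 0$, and hence $\alpha = x \in I$. The reverse inclusion $I \subset H^*(BA_4)\cdot I \cap H^*(B\SO(3))$ is automatic. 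This proves the claim.

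There is no real obstacle; the only thing to notice is that even though $\chara \bbF_2$ divides $|\GL_2(2)| = 6$, so \cref{lem:contractionofideal} does not apply, the module-theoretic structure of the specific extension $H^*(B\SO(3)) \subset H^*(BA_4)$ is strong enough (namely, free of rank $2$) to make the Reynolds-operator argument unnecessary.
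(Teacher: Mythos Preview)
Your proof is correct and is essentially identical to the paper's own argument: both use the free $H^*(B\SO(3))$-module decomposition $H^*(BA_4)=\bbF_2[u,v]\oplus \bbF_2[u,v]\,w$, expand an arbitrary element of the extended ideal in the basis $\{1,w\}$, and read off from uniqueness that the $\bbF_2[u,v]$-component already lies in $I$. Your closing remark about why \cref{lem:contractionofideal} does not apply here is also the point the paper makes just before stating the lemma.
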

\begin{proof}
The ideal $I$ is contained in its extension $H^*(BA_4)I$ and in $H^*(B\SO(3))$. We show the converse, i.e., that $H^*(BA_4)I\cap H^*(B\SO(3))\subset I$. An element of the extension $H^*(BA_4)I$ is a finite sum $X=\sum_i p_i f_i$ with $p_i\in H^*(BA_4)$ and $f_i\in I$. As a graded module over $H^*(B\SO(3))\cong \bbF_2[u,v]$, we have a direct sum
\[H^*(BA_4)\cong \bbF_2[u,v] \oplus \bbF_2[u,v]w.
\]
Thus each $p_i$ can be written uniquely as $p_i= q_i + q'_iw$ with $q_i,q'_i\in \bbF_2[u,v]$ and $X$ decomposes as
\[X= (\sum_i q_if_i) + (\sum_i q'_if_i)w
.\]
Suppose that $X\in H^*(B\SO(3))$. Then $\sum_i q'_i f_i=0$ and hence $X=\sum_i q_if_i$ lies in $I$.
\end{proof}

Recall that $A$ is Cohen-Macaulay if it has a parameter ideal generated by a regular sequence. We have the following:

\begin{lemma} The rings $H^* (BA_4)$ and $H^*(B\SO(3))$ are Cohen-Macaulay rings. 
\end{lemma}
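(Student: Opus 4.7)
The plan is to treat the two rings separately, but using the same basic principle in each case: find an explicit homogeneous system of parameters and show that it is a regular sequence, which by definition certifies Cohen-Macaulayness.

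For $H^*(B\SO(3)) \cong \bbF_2[u,v]$ there is nothing to do beyond pointing out that a polynomial ring in finitely many variables is Cohen-Macaulay, with $\{u,v\}$ itself being a regular sequence that is a system of parameters (the quotient is $\bbF_2$, and the Krull dimension is $2$ as recorded just before the statement).

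For $H^*(BA_4) \cong \bbF_2[u,v,w]/\langle u^3+v^2+vw+w^2\rangle$ I would take $\{u,v\}$ as the proposed homogeneous system of parameters and argue by reducing to the free-module decomposition that appeared in the proof of \cref{lem:contractionofidealinHSO3}, namely
\[H^*(BA_4) \cong \bbF_2[u,v] \oplus \bbF_2[u,v]\,w\]
as a graded $\bbF_2[u,v]$-module. First, $\{u,v\}$ is a system of parameters: the Krull dimension of $H^*(BA_4)$ is $2$, and modulo $(u,v)$ the defining relation becomes $w^2 = 0$, so $H^*(BA_4)/(u,v) \cong \bbF_2[w]/\langle w^2\rangle$ is finite over $\bbF_2$. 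Second, I would check regularity in two steps. Multiplication by $u$ acts componentwise on the free decomposition above, and it is injective on $\bbF_2[u,v]$, hence injective on $H^*(BA_4)$. Then in the quotient $H^*(BA_4)/(u) \cong \bbF_2[v,w]/\langle v^2+vw+w^2\rangle$ the defining polynomial is monic of degree $2$ in $w$, so this quotient is $\bbF_2[v]$-free on $\{1,w\}$, and multiplication by $v$ is again injective.

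The main (minor) obstacle is just verifying that after killing $u$ the relation $v^2+vw+w^2$ is still monic in $w$ so that the quotient stays $\bbF_2[v]$-free; once that is observed, regularity of $v$ on $H^*(BA_4)/(u)$ is automatic. Alternatively, one could bypass this hands-on argument by invoking the Hochster--Eagon theorem: $H^*(BA_4)\cong \bbF_2[a,b]^{C_3}$ is the invariant ring of a finite group whose order is prime to $\chara \bbF_2 = 2$, hence automatically Cohen--Macaulay, and $H^*(B\SO(3))\cong \bbF_2[u,v]$ is literally a polynomial ring. I would prefer the elementary computation since the free $\bbF_2[u,v]$-decomposition is already being used in the paper and yields an explicit regular sequence that will be useful in subsequent sections.
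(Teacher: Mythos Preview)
Your proof is correct. Both you and the paper end up exhibiting $\{u,v\}$ as a regular homogeneous system of parameters, but the verifications differ. The paper argues uniformly for both invariant rings at once: it quotes \cite[Proposition~5.1.1]{neuselsmith2002} to the effect that a pair of homogeneous elements in an invariant ring of $\bbF_2[a,b]$ is a regular sequence there if and only if it is a regular sequence in $\bbF_2[a,b]$; since $\bbF_2[a,b]$ is Cohen--Macaulay and $\{u,v\}$ is a system of parameters in it, the pair is regular in $\bbF_2[a,b]$ and hence in both $H^*(BA_4)$ and $H^*(B\SO(3))$. Your approach instead treats the two rings separately and, for $H^*(BA_4)$, checks regularity by hand using the free $\bbF_2[u,v]$-module decomposition $\bbF_2[u,v]\oplus \bbF_2[u,v]w$ already used in \cref{lem:contractionofidealinHSO3}. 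This is more elementary and self-contained (no external reference needed), while the paper's argument is more conceptual and treats both rings in a single stroke via invariant theory. Your alternative via Hochster--Eagon is also valid and is essentially the non-modular special case of what the cited Neusel--Smith result is doing.
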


\begin{proof} The polynomial algebra $H^*(B(\bbZ/2)^2)=\bbF_2[a,b]$ has a system of parameters given by the regular sequence $a$, $b$ and thus is Cohen-Macaulay. In a Cohen-Macaulay ring a sequence 
of $d=\dim A$ elements $a_1,\ldots,a_d$ is a system of homogeneous parameters if and only if 
it is a regular sequence; see e.g. \cite[Theorem~A.3.5]{neuselsmith2002}.  By \cite[Proposition~5.1.1]{neuselsmith2002}, two homogeneous 
elements $a_1,a_2$ in an invariant ring of $\bbF_2[a,b]$ form a regular sequence 
in $\bbF_2[a,b]$ if and only if they form a regular sequence in the invariant ring. 
Since $\{u, v\}$ is a system of parameters in $\bbF_2[a,b]$, it is a regular sequence 
in $\bbF_2[a,b]$, hence it is a regular sequence in $H^* (BA_4)$ and $H^* (B\SO(3))$.
It follows that these rings are Cohen-Macaulay.
\end{proof}

All three algebras $H^*(B(\bbZ/2)^2)$, $H^*(BA_4)$, and $H^*(B\SO(3))$ are unique factorization domains; see \cite[Theorem~2.11]{nakajima1982} for $H^*(BA_4)$. This gives the following:

\begin{lemma}\label{lem:Coprime} Let $I$ be an ideal in $H^* (BA_4)$, $H^*(B\SO(3))$, or $H^*(B(\bbZ/2)^2)$ generated by homogeneous elements $X$ and $Y$ of positive degrees. Then the following are equivalent:
\begin{enumerate} 
\item $I$ is a parameter ideal.
\item $X, Y$ form a regular sequence.
\item $X$ and $Y$ are coprime.
\end{enumerate}
\end{lemma}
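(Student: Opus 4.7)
The plan is to establish the cyclic equivalence by splitting it as (1)$\Leftrightarrow$(2) and (2)$\Leftrightarrow$(3), each a direct consequence of properties already recorded in this section.

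For (1)$\Leftrightarrow$(2), I would invoke the preceding Cohen-Macaulay lemma: $H^*(BA_4)$ and $H^*(B\SO(3))$ are Cohen-Macaulay of Krull dimension $2$, and $H^*(B(\bbZ/2)^2)=\bbF_2[a,b]$ is a polynomial ring, hence also Cohen-Macaulay of Krull dimension $2$. The cited characterization \cite[Theorem~A.3.5]{neuselsmith2002} then gives, in a Cohen-Macaulay graded algebra of dimension $d$, that any $d$-tuple of positive-degree homogeneous elements is a system of parameters if and only if it is a regular sequence. Since $d=2$ and $X,Y$ is a $2$-tuple, the equivalence is immediate.

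For (2)$\Leftrightarrow$(3), I would use that all three algebras are UFDs (already noted, citing \cite[Theorem~2.11]{nakajima1982} for $H^*(BA_4)$). Because they are domains, the nonzero element $X$ is automatically a non-zero-divisor, so regularity of the sequence $X,Y$ reduces to $Y$ being a non-zero-divisor on $A/\langle X\rangle$. In a UFD, this is equivalent to $X$ and $Y$ being coprime. Indeed, if $d$ is a non-unit common factor and $X=dX'$, $Y=dY'$, then $YX'=Y'X\in\langle X\rangle$ while $X'\notin\langle X\rangle$ (otherwise $X\mid X'$ forces $d$ to be a unit), so $Y$ is a zero-divisor modulo $\langle X\rangle$. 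Conversely, if $X,Y$ are coprime and $YZ\in\langle X\rangle$, then $X\mid YZ$ forces $X\mid Z$ by unique factorization, giving $Z\in\langle X\rangle$.

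I do not expect a real obstacle here: every ingredient — Cohen-Macaulayness, the parameter-sequence/regular-sequence dictionary, and the UFD property — is explicitly available in the preceding text, and the UFD-side computation is a standard elementary argument. The only minor point to keep track of is that homogeneity and positive degree are used implicitly to ensure $X$ is a genuine non-zero-divisor and that the notion of coprimality in these connected graded domains is unambiguous (units are just nonzero scalars in degree $0$).
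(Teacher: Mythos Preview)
Your proposal is correct and follows essentially the same route as the paper: use the Cohen--Macaulay property in Krull dimension $2$ to get (1)$\Leftrightarrow$(2), then use that the rings are domains so regularity reduces to $Y$ being a non-zero-divisor on $A/\langle X\rangle$, and finally invoke the UFD property for (2)$\Leftrightarrow$(3). The paper's proof is simply a terser version of what you wrote, omitting the explicit verification of the UFD step that you spelled out.
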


\begin{proof}
Let $A$ denote any of the three cohomology rings. Since $A$ is Cohen-Macaulay and of Krull dimension $2$, the first two statements are equivalent; see e.g. \cite[Theorem~A.3.5]{neuselsmith2002}. By definition, elements $X, Y$ of positive degree form a regular sequence if and only if multiplication by $X$ is injective on $A$ and multiplication by $Y$ is injective on $A/\langle X\rangle$. Since $A$ does not have zero-divisors, the first condition holds. Since $A$ is a unique factorization domain, the kernel of multiplication by $Y$ on $A/\langle X\rangle$ is generated by $X$ divided by the greatest common divisor of $X$ and $Y$. Hence the second condition is equivalent to $X$ and $Y$ being coprime.
\end{proof}

Our goal is to classify all parameter ideals in $H^*(B\SO(3))$ and in $H^*(BA_4)$ that are closed under Steenrod operations.  
\begin{definition}
An ideal $I$ in the mod-$2$ cohomology of a space is \emph{Steenrod closed} if $\Sq(I)\subset I$, where $\Sq$ denotes the total Steenrod square.
\end{definition}

The total Steenrod square for $H^*(B(\bbZ/2)^2)=\bbF_2[a,b]$ is the ring homomorphism determined by
\[\Sq(a)=a^2 + a, \quad \Sq(b)=b^2 +b.
\]
Hence the total Steenrod squares of the generators $u,v, w$ for $H^*(BA_4)$ can be computed as follows:
\begin{align*}
\Sq(u) &=u+v+u^2,\\
\Sq(v) &=v+uv+v^2,\\
\Sq(w)&=w+u^2+u(v+w)+w^2.
\end{align*}

If $f\colon R \to S$ is a ring homomorphism and $I \subset S$ is an ideal in $S$, then the ideal $f^{-1} (I)$ is called contraction of $I$. For an ideal $J \subset R$, the ideal generated by the image $f(J)$ is called an extension of $J$.

\begin{lemma}\label{lem:steenrodclosedinextension}
An ideal $I\subset H^*(BA_4)$ is Steenrod closed if and only if its extension in $\bbF_2[a,b]$ is Steenrod closed. An ideal $I\subset H^*(B\SO(3))$ is Steenrod closed if and only if its extension in $H^*(BA_4)$ is Steenrod closed.
\end{lemma}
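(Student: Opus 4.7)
My plan is to prove both statements by a common argument. In both cases we have a subring inclusion $B \hookrightarrow A$, with either $B = H^*(BA_4)$ and $A = \bbF_2[a,b]$ (where $B = A^{C_3}$), or $B = H^*(B\SO(3))$ and $A = H^*(BA_4)$. For an ideal $I \subset B$, its extension in $A$ is $J = AI$, and I need to show $I$ is Steenrod closed if and only if $J$ is. Two general facts underpin the argument: the total Steenrod square is a ring endomorphism of each cohomology ring in sight, and the inclusion $B \hookrightarrow A$ is induced by a restriction map on cohomology, so it commutes with $\Sq$. In particular $\Sq(B) \subset B$, and for $x \in B$ the element $\Sq(x)$ is the same whether computed in $B$ or in $A$.

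For the forward direction, assume $\Sq(I) \subset I$. A general element $y \in J$ can be written as a finite sum $y = \sum_i p_i f_i$ with $p_i \in A$ and $f_i \in I$. Since $\Sq$ is multiplicative, $\Sq(y) = \sum_i \Sq(p_i)\Sq(f_i)$. Each $\Sq(f_i)$ lies in $I \subset J$ by hypothesis, and $\Sq(p_i) \in A$, so each summand lies in the $A$-ideal $J$; hence $\Sq(y) \in J$.

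For the reverse direction, suppose $\Sq(J) \subset J$ and take $x \in I$. Then $\Sq(x) \in J$ because $I \subset J$ and $J$ is Steenrod closed, while $\Sq(x) \in B$ because $\Sq$ preserves $B$. Thus $\Sq(x) \in J \cap B$. The contraction identity $J \cap B = I$ is furnished by \cref{lem:contractionofideal} applied to the $C_3$-action (whose order is coprime to $\chara \bbF_2$) in case (i), and by \cref{lem:contractionofidealinHSO3} in case (ii); both have already been established. We conclude $\Sq(x) \in I$. The proof is largely formal once the contraction lemmas are in hand; the only substantive point beyond them is the naturality observation that $\Sq$ makes sense simultaneously on $B$ and $A$ and agrees on the overlap, which is needed both to make sense of the statement and to pin $\Sq(x)$ down inside $B$ in the backward direction.
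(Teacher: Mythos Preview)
Your proof is correct and follows essentially the same approach as the paper: the paper argues that Steenrod closed ideals are preserved under extension and contraction along the inclusions (citing \cite[Lemma~9.2.2]{neuselsmith2002} for this general fact), and then invokes \cref{lem:contractionofideal} and \cref{lem:contractionofidealinHSO3} to identify the contraction of the extension with the original ideal. You do the same, except that instead of citing the reference you spell out the extension/contraction preservation directly using that $\Sq$ is a ring endomorphism compatible with the inclusion $B\hookrightarrow A$.
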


\begin{proof}
Consider the ring homomorphisms $$f \colon H^* (B\SO (3)) \to H^* (BA_4) \quad \text{and} \quad g \colon H^* (BA_4) \to \bbF_2 [a,b]$$ defined by inclusions. Since both $f$ and $g$ preserve Steenrod operations, Steenrod closed ideals in these rings are closed under extension and contraction; see \cite[Lemma~9.2.2]{neuselsmith2002}. By
\cref{lem:contractionofideal}, the contraction of the extension 
$\bbF_2 [a, b] I$, with respect to $g$, of an ideal $I\subset H^*(BA_4)$ is $I$ itself, 
and the analogous statement holds for contractions of extensions of ideals $I\subset H^*(B\SO(3))$ with respect to $f$
by \cref{lem:contractionofidealinHSO3}.  
\end{proof}

We will use repeatedly and often implicitly the following basic properties:

\begin{proposition}[{see \cite[Chapter~1]{walkerwood2018vol1}}] Let $R=\bbF_2[a,b]$. The Steenrod operations satisfy the following properties:
\begin{enumerate}
    \item $\Sq\colon R\to R$ commutes with the $\GL_2(2)$-action.
    \item  For $f\in R$ and $s,i\geq 0$,
    \[\Sq^i(x^{2^s}) = \begin{cases} (\Sq^j(x))^{2^s} \quad &\text{if } i =2^s j, \\
    0,\quad & \text{else.}\end{cases}\]
    \item The Cartan formula 
    \[\Sq^n(xy)=\sum_{i+j=n}\Sq^i(x)\Sq^j(y)\]
    holds. In particular, $\Sq^1$ is a derivation and $\Sq^1(x^2)=0$ for any $x$.
    \item  The sequence
    \[\langle a,b\rangle \xrightarrow{\Sq^1} \langle a,b\rangle \xrightarrow{\Sq^1}  \langle a,b\rangle\] 
    of $\bbF_2$-vector spaces is exact.
\end{enumerate}
\end{proposition}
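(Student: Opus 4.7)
The plan is to derive each of the four items from the standard axiomatic properties of Steenrod operations (naturality, the Cartan formula, and the unstable conditions $\Sq^0=\id$, $\Sq^{|x|}(x)=x^2$, and $\Sq^i(x)=0$ for $i>|x|$), all of which hold in $H^*(B(\bbZ/2)^2;\bbF_2)=\bbF_2[a,b]$ with $|a|=|b|=1$. For (1), I would invoke naturality: each $g\in \GL_2(2)$ corresponds to a group automorphism of $(\bbZ/2)^2$ that induces a self-map of $B(\bbZ/2)^2$ whose effect on mod-$2$ cohomology is precisely the given $\GL_2(2)$-action on $\bbF_2 a\oplus \bbF_2 b$, and Steenrod squares commute with maps induced by continuous maps. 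For (2), I would first prove the base case $s=1$, which reads $\Sq^i(x^2)=(\Sq^{i/2}x)^2$ for even $i$ and $0$ otherwise; this comes from the Cartan expansion $\Sq^i(x^2)=\sum_{j+k=i}\Sq^j(x)\Sq^k(x)$, in which off-diagonal summands pair up and cancel over $\bbF_2$ so that only the diagonal contribution survives (and only when $i$ is even). The general formula then follows by induction on $s$ applied to $x^{2^s}=(x^{2^{s-1}})^2$. Item (3) is one of the axioms, and the derivation property of $\Sq^1$ is its degree-one case combined with $\Sq^0=\id$.

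For (4), the key observation is that $\Sq^1$ is the unique derivation on $\bbF_2[a,b]$ with $\Sq^1(a)=a^2$ and $\Sq^1(b)=b^2$, so the differential graded algebra $(\bbF_2[a,b],\Sq^1)$ factors as the tensor product $(\bbF_2[a],\Sq^1)\otimes (\bbF_2[b],\Sq^1)$ of single-variable DGAs. On each factor the Leibniz rule gives $\Sq^1(x^n)=nx^{n+1}$, so $\ker \Sq^1=\bbF_2[x^2]$ and $\im \Sq^1=x^2\bbF_2[x^2]$, and the cohomology is $\bbF_2$ concentrated in degree $0$. The Künneth formula over the field $\bbF_2$ then gives that the cohomology of the tensor product is $\bbF_2$ concentrated in degree $0$; restricted to positive degrees this is exactly the asserted exactness on $\langle a,b\rangle$.

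The whole proposition is essentially bookkeeping on top of the Steenrod axioms, so I do not expect a genuine obstacle. The step most in need of care is item (4), where I would want to check that the DGA decomposition and the Künneth argument apply as claimed over the field $\bbF_2$; alternatively one could give a direct monomial-by-monomial argument, observing that $\Sq^1(a^i b^j)=i\,a^{i+1}b^j+j\,a^i b^{j+1}$ vanishes exactly on squares and that every square in $\langle a,b\rangle$ is visibly in the image.
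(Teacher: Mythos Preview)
Your proposal is correct: each of the four items follows from the standard Steenrod axioms exactly as you outline, and the K\"unneth argument for (4) is the clean way to see that the Margolis homology of $(\bbF_2[a,b],\Sq^1)$ is $\bbF_2$ in degree~$0$. There is nothing to compare against, however, since the paper does not supply its own proof of this proposition --- it is stated as a recollection with a reference to \cite[Chapter~1]{walkerwood2018vol1} and is followed immediately by a remark, with no proof environment in between.
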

\begin{remark}\label{rem:Sq1exact}
The induced sequence 
    \[\langle a,b\rangle\cap R^G \xrightarrow{\Sq ^1} \langle a,b\rangle \cap R^G 
    \xrightarrow{\Sq^1} \langle a,b\rangle\cap R^G\]
    is exact for $G=C_3$ since the fixed point functor is exact if the characteristic of the field is coprime to the order of the group. It is not exact for $G=\GL_2(2)$, since there is no element $x$ with $\Sq^1(x)=u^2$.
\end{remark}
We will use the following observations to detect invariant elements.
\begin{lemma}\label{lem:divisionlandsininvariantring}
Suppose that a finite group acts on an integral domain $A$ via ring homomorphisms.
\begin{enumerate}
    \item For any equality $x=\lambda y$ with $x,y\in A^G$, $y\neq 0$ and $\lambda\in A$, it follows that $\lambda\in A^G$.
    \item If the characteristic of $A$ is $2$, then square roots are unique. In particular, $x^2\in A^G$ for $x\in A$ implies $x\in A^G$.
\end{enumerate}
\end{lemma}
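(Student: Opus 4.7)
For part (1), the plan is to apply each group element to the given equation and use the integral domain property to cancel. Fix $g \in G$ and apply $g$ to $x = \lambda y$. Since $x, y \in A^G$, this yields $x = g(\lambda) y$, so $(\lambda - g(\lambda))y = 0$. Because $A$ is an integral domain and $y \neq 0$, we conclude $g(\lambda) = \lambda$. Running over all $g \in G$ gives $\lambda \in A^G$.

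For part (2), I first establish uniqueness of square roots in characteristic $2$. Suppose $x^2 = y^2$ with $x, y \in A$. Then $(x+y)^2 = x^2 + 2xy + y^2 = x^2 + y^2 = 0$, using $\chara A = 2$ for the middle equality and the hypothesis for the last. Since $A$ is an integral domain, $x + y = 0$, which in characteristic $2$ forces $x = y$. Now suppose $x^2 \in A^G$ and let $g \in G$. Then $g(x)^2 = g(x^2) = x^2$, so by uniqueness of square roots $g(x) = x$. Hence $x \in A^G$.

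Neither step presents a real obstacle; both are immediate consequences of the fact that $A$ has no zero-divisors, once the right algebraic identity is set up. The only thing worth noting is that in part (1) the statement really does require $y \neq 0$ (to cancel $y$), and in part (2) the characteristic hypothesis is used precisely to turn $x^2 = g(x)^2$ into $(x - g(x))^2 = 0$.
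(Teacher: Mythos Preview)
Your proof is correct and matches the paper's own argument essentially verbatim: both parts use the integral domain hypothesis to cancel, and the only cosmetic difference is that the paper writes $(x-y)^2=0$ where you write $(x+y)^2=0$, which are identical in characteristic~$2$.
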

\begin{proof}\begin{enumerate}
\item For any $g\in G$, we have
\[\lambda y =x=gx = (g\lambda)(gy)=(g\lambda)y.\]
Since $A$ has no zero divisors, it follows that $\lambda=g\lambda$.
\item Let $x,y\in A$. If $0=x^2-y^2=(x-y)^2$, then $x=y$. If $x^2\in A^G$, then $x^2=g(x^2)=(gx)^2$ for all $g\in G$. Since square roots in $A$ are unique, it follows that $x=gx$ for all $g\in G$, i.e., $x\in A^G$.\qedhere
\end{enumerate}
\end{proof}

\section{Classification of twisted ideals}\label{sect:Twisted}
In this section, we classify the Steenrod closed parameter ideals in $H^*(BA_4)$ that have a system of parameters of the form $\{X,\Sq^1(X)\}$. We start with a lemma on parameter ideals.

\begin{lemma}\label{lem:lowergeneratorunique}
Let $I\subset H^*(BA_4)$ be a Steenrod closed parameter ideal. Choose a homogeneous system $\{X, Y\}$ of parameters for $I$, then $|X|\neq |Y|$ and the parameter of lower degree as well as the degrees of the parameters are independent of the choice.
\end{lemma}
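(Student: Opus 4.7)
The plan is to prove three facts in turn: first, that $|X|\neq |Y|$; second, that the unordered pair of degrees $\{|X|,|Y|\}$ is an invariant of $I$; and third, that the parameter of lower degree is itself an invariant. The only nontrivial input is Oliver's theorem (\cref{thm:oliver}); the remaining arguments are standard graded Nakayama and degree bookkeeping.

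For the first step I would argue by contradiction. If $|X|=|Y|=i$, then $I$ is a nonzero Steenrod closed ideal generated by homogeneous elements of the same degree $i$, so \cref{thm:oliver} forces $I=\langle v^k\rangle$ with $k=i/3$. Since $v^k$ is a non-zerodivisor of positive degree in the two-dimensional domain $H^*(BA_4)$, Krull's principal ideal theorem gives $\dim H^*(BA_4)/I = 1$, so the quotient is not finite over $\bbF_2$, contradicting the parameter ideal hypothesis.

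For the second step, let $\mathfrak{m}\subset H^*(BA_4)$ be the augmentation ideal. Since $I$ is $2$-generated, $\dim_{\bbF_2} I/\mathfrak{m}I \leq 2$. If this dimension were $1$, then graded Nakayama would make $I$ principal, again contradicting that $I$ is a parameter ideal in a two-dimensional domain. Hence $\dim_{\bbF_2}I/\mathfrak{m}I=2$, so $\{X,Y\}$ is a minimal homogeneous generating set; the graded $\bbF_2$-vector space $I/\mathfrak{m}I$ records exactly the multiset of degrees of any such generating set, and is clearly intrinsic to $I$. Combined with the first step, this yields a well-defined smaller degree. For the third claim, if $X'\in I$ is any homogeneous parameter of this smaller degree, I write $X'=aX+bY$ with homogeneous $a,b\in H^*(BA_4)$; degree comparison forces $b=0$ (since $|b|=|X|-|Y|<0$) and $a\in \bbF_2$, so $a=1$ and $X'=X$.

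The main conceptual obstacle, such as it is, is recognizing early that one must exclude the possibility of $I$ being principal — otherwise the degree count and the uniqueness statement would be vacuous or false. This is handled uniformly by the observation that a principal ideal generated by a positive-degree non-zerodivisor in $H^*(BA_4)$ has one-dimensional quotient and therefore cannot be a parameter ideal. After that, everything reduces to applying graded Nakayama and matching degrees.
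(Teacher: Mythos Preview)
Your proof is correct and follows essentially the same approach as the paper: both invoke Oliver's theorem to rule out equal degrees, and both obtain uniqueness of the lower-degree parameter by writing an arbitrary degree-$|X|$ element of $I$ as $aX+bY$ and forcing $b=0$, $a=1$. The only difference is in how the invariance of the degree pair is argued: you package it via graded Nakayama and the intrinsic graded vector space $I/\mathfrak{m}I$, whereas the paper does it by hand, first pinning down $X$ and then writing $Y'=\lambda Y+\mu X$ for any second parameter and observing that $\lambda\neq 0$ (else $I$ would be principal), so $|Y|\le |Y'|$ and symmetrically $|Y'|\le |Y|$. Both routes are short; yours is slightly more conceptual, the paper's slightly more elementary.
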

\begin{proof}
Any Steenrod closed ideal generated by homogeneous elements of the same degree is of the form $\langle v^i\rangle$ by \cref{thm:oliver}
and thus cannot be a parameter ideal. Hence $X$ and $Y$ must have different degrees. If $|X|<|Y|$, then any homogeneous element of $I$ has degree at least $|X|$ and the homogeneous elements of degree $|X|$ form a one dimensional vector space over $\bbF_2$. Hence $X$ is independent of the choice of system of parameters. If $\{X,Y'\}$ is another system of parameters, then $Y'= \lambda Y+ \mu X$ for homogeneous coefficients $\mu, \lambda\in H^*(BA_4)$. The coefficient $\lambda$ is nonzero since $I$ is a parameter ideal. Hence $|Y|\leq |Y'|$ and analogously $|Y'|\leq |Y|$. Thus the degree of $Y$ is independent of the choice as well.
\end{proof}

\begin{lemma} \label{lem:Sq1Xneq0means} If $\langle X,Y\rangle$ is a Steenrod closed ideal in $H^*(BA_4)$ 
where $X$ and $Y$ are homogeneous elements with $|X|<|Y|$, then either $\Sq^1(X)=0$ or $Y=\Sq^1(X)$.
\end{lemma}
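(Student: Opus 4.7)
The plan is to use Steenrod-closedness to pin down $\Sq^1(X)$ modulo the ideal, and then read off the answer from the degree-0 and degree-1 pieces of $H^*(BA_4)$. Since $\langle X,Y\rangle$ is Steenrod closed, $\Sq^1(X)$ lies in $\langle X,Y\rangle$, and because $\Sq^1(X)$ is homogeneous of degree $|X|+1$, I can pick homogeneous coefficients and write
\[
\Sq^1(X)=pX+qY
\]
with $|p|=1$ and $|q|=|X|+1-|Y|$.

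The first key observation is that $H^*(BA_4)\cong \bbF_2[u,v,w]/\langle u^3+v^2+vw+w^2\rangle$ by \cref{thm:AdemMilgram} has generators in degrees $2$ and $3$, so its degree-$1$ component is zero. This forces $p=0$, and therefore $\Sq^1(X)=qY$.

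Next I would split on the gap between $|X|$ and $|Y|$. The hypothesis $|X|<|Y|$ gives $|X|+1\le |Y|$, so there are exactly two subcases. If $|X|+1<|Y|$, then $|q|<0$, so $q=0$ and $\Sq^1(X)=0$. If $|X|+1=|Y|$, then $q\in \bbF_2=(H^*(BA_4))_0$; the case $q=0$ again gives $\Sq^1(X)=0$, and the case $q=1$ gives $Y=\Sq^1(X)$, which is precisely the alternative in the statement.

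There is essentially no obstacle: the whole argument rests on the vanishing of $H^1(BA_4;\bbF_2)$, which is where the dichotomy ``either $\Sq^1(X)=0$ or $Y=\Sq^1(X)$'' (rather than $Y=\Sq^1(X)+cX$ with $c\in H^1$) comes from. Thus the proof is a one-line degree count once the element is expressed in the generating set of the ideal.
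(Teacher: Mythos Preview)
Your proof is correct and follows essentially the same approach as the paper: write $\Sq^1(X)$ in terms of the generators, kill the coefficient of $X$ using $H^1(BA_4)=0$, and observe that the remaining coefficient lies in degree $\le 0$. Your version is slightly more explicit in separating the cases $|X|+1<|Y|$ and $|X|+1=|Y|$, but the argument is the same.
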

\begin{proof}
We can write $\Sq^1(X) = \lambda X+\nu Y$ for some elements $\lambda$ and $\nu$ in $H^*(BA_4)$. Since $H^1(BA_4)=0$, we have $\lambda=0$. The element $\nu$ is of degree $0$, so it is either 0 or 1.
\end{proof}

\begin{definition}\label{def:twistedideal}We call a Steenrod closed parameter ideal $I\subset H^*(BA_4)$ \emph{twisted} if it has a system of parameters of the form $\{X,\Sq^1(X)\}$. Otherwise it is called \emph{nontwisted}.  
\end{definition}

\begin{remark}\label{rem:twistedUniqueParameters}
If $I\subset H^*(BA_4)$ is twisted, then $I$ has a unique homogeneous system of parameters. Indeed, the parameter of lowest degree $X$ is unique by \cref{lem:lowergeneratorunique} and the only possibility for the second parameter is $\Sq^1(X)$ by \cref{lem:Sq1Xneq0means}.
\end{remark}
We will use the following observation for subsequent calculations.
\begin{lemma}\label{lem:kameko_decomposition}
    The $\bbF_2$-linear map $\bbF_2[a,b]^4 \to \bbF_2[a,b]$ that sends 
    $(x,y,z,t)$ to $x^2+ay^2+bz^2+abt^2$ is an isomorphism.
    
    Elements concentrated in even degrees correspond to tuples of the form $(x,0,0,t)$ and elements concentrated in odd degrees correspond to tuples of the form $(0,y,z,0)$.
\end{lemma}
\begin{proof}
Decompose the $\bbF_2$-vector space $\bbF_2[a,b]$ as
\[\bbF_2[a,b] = \bbF_2[a,b]^{\text{even},\text{even}} \oplus \bbF_2[a,b]^{\text{odd},\text{even}}\oplus \bbF_2[a,b]^{\text{even},\text{odd}}\oplus \bbF_2[a,b]^{\text{odd},\text{odd}},
\]
where $\bbF_2[a,b]^{\text{even},\text{even}}$ is spanned by the monomials such that the exponents of both $a$ and $b$ are even, $\bbF_2[a,b]^{\text{even},\text{odd}}$ is spanned by the monomials such that the exponent of $a$ is even and the exponent of $b$ is odd, etc.
Then each of $x\mapsto x^2$, $y\mapsto ay^2$, $z\mapsto bz^2$, and $t\mapsto abt^2$ is an $\bbF_2$-linear isomorphism from $\bbF_2[a,b]$ to the corresponding direct summand.

The elements concentrated in even degrees are the elements in 
$\bbF_2[a,b]^{\text{even},\text{even}}\oplus \bbF_2[a,b]^{\text{odd},\text{odd}}$.
The elements concentrated in odd degrees are the elements in 
$\bbF_2[a,b]^{\text{even},\text{odd}}\oplus \bbF_2[a,b]^{\text{odd},\text{even}}$.
\end{proof}

For cohomology classes in $H^*(B(\bbZ/2)^2)$ or $H^* (BA_4)$, we have the following observations (see \cite[Proof of Lemma~1]{Oliver}).

\begin{lemma}\label{lem:General} Suppose that $X$ is a homogeneous element of degree $n$ in $\bbF_2[a,b]$ which satisfies $\Sq^1(X)=0$. 
\begin{enumerate}
    \item If $n$ is even, then $X=x^2$ for some homogeneous $x\in \bbF_2[a,b]$.
    \item If $n$ is odd, then $X=vx^2$ for some homogeneous $x\in \bbF_2[a,b]$.
\end{enumerate}
If $X\in H^*(BA_4)$, then $x$ is an element of $H^*(BA_4)$ in either case.
\end{lemma}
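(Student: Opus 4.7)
The plan is to describe the kernel of $\Sq^1$ on $\bbF_2[a,b]$ completely by working over the subring of squares $\bbF_2[a^2,b^2]$. I would start from the free module decomposition
\[\bbF_2[a,b] = \bbF_2[a^2,b^2]\oplus a\bbF_2[a^2,b^2]\oplus b\bbF_2[a^2,b^2]\oplus ab\,\bbF_2[a^2,b^2],\]
writing $X = A_0 + aA_1 + bA_2 + abA_3$ uniquely with $A_i\in\bbF_2[a^2,b^2]$. Since each $A_i$ is a square we have $\Sq^1(A_i)=0$, and the Cartan formula together with $\Sq^1(a)=a^2$, $\Sq^1(b)=b^2$ give
\[\Sq^1(X) = a^2A_1 + b^2A_2 + vA_3,\qquad v = a^2b+ab^2 = ab(a+b).\]

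Next I would read off the four components of this expression in the direct sum decomposition above: the summand $vA_3 = a^2bA_3 + ab^2A_3$ contributes only to the $b$- and $a$-components, while $a^2A_1+b^2A_2$ lies in $\bbF_2[a^2,b^2]$. Setting $\Sq^1(X)=0$ therefore forces $A_3=0$ and $a^2A_1 = b^2A_2$. Since $a^2$ and $b^2$ are coprime in the UFD $\bbF_2[a^2,b^2]$, the second equation yields $A_1 = b^2A'$, $A_2 = a^2A'$ for a common $A'\in\bbF_2[a^2,b^2]$, so $X = A_0 + vA'$. This identifies
\[\ker(\Sq^1) = \bbF_2[a^2,b^2]\oplus v\,\bbF_2[a^2,b^2].\]
Separating by the parity of $n$, homogeneity of $X$ kills one of the two summands because $|A_0|$ is even while $|vA'|$ is odd. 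By Frobenius the subring $\bbF_2[a^2,b^2]$ equals $\bbF_2[a,b]^2$, so $A_0$ and $A'$ are squares; this produces $X = x^2$ in the even case and $X = vx^2$ in the odd case.

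Finally, for the invariance claim, I would invoke \cref{lem:divisionlandsininvariantring} applied to the $C_3$-action on $\bbF_2[a,b]$. In the even case, $x^2 = X\in H^*(BA_4)$ gives $x\in H^*(BA_4)$ by uniqueness of square roots in characteristic $2$ (part~(2)). In the odd case, $v\in H^*(BA_4)$ is nonzero and $vx^2 = X\in H^*(BA_4)$, so part~(1) yields $x^2\in H^*(BA_4)$, and part~(2) then yields $x\in H^*(BA_4)$. The only step that needs care is the bookkeeping that turns $\Sq^1(X)=0$ into the componentwise system $A_3=0$ and $a^2A_1=b^2A_2$; once the module decomposition is in place, the rest is routine.
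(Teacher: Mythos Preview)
Your proof is correct and is essentially the paper's argument, organized a little more uniformly. The paper uses the same free $\bbF_2[a^2,b^2]$-module decomposition (later formalized as the map $\kappa$ in \cref{def:kameko}) but splits by parity first: for even $n$ it writes $X=x^2+abt^2$ and computes $\Sq^1(X)=vt^2$, forcing $t=0$; for odd $n$ it writes $X=ay^2+bz^2$, computes $\Sq^1(X)=(ay+bz)^2$, and then runs the coprimality argument to get $y=bx$, $z=ax$. Your version treats all four components at once and thereby identifies the full kernel $\ker(\Sq^1)=\bbF_2[a^2,b^2]\oplus v\,\bbF_2[a^2,b^2]$ before using homogeneity to isolate one summand; this is a mild streamlining rather than a different idea. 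The invariance step via \cref{lem:divisionlandsininvariantring} is identical to the paper's.
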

\begin{proof} We use the observation from \cref{lem:kameko_decomposition}.
If $n$ is even, then we can write $X=x^2 +abt^2$ for some $x, t\in \bbF_2 [a,b]$. This gives
\[0=\Sq^1(x^2+abt^2) = (a^2b+ab^2)t^2,\]
which implies that $t=0$. So $X$ is the square of $x$.
If $X\in H^*(BA_4)=\bbF_2[a,b]^{C_3}$, then it follows that $x\in H^*(BA_4)$ by \cref{lem:divisionlandsininvariantring}.

If $n$ is odd, we can write $X=ay^2 +bz^2$. Computing $\Sq^1(X)$, we get \[0=\Sq^1(X)= a^2y^2+b^2 z^2=(ay+bz)^2,\]
which gives $ay=bz$. Hence $y=bx$ and $z=ax$ for some $x\in \bbF_2 [a, b]$. It follows that $X=ab^2 x^2 + a^2bx^2=vx^2$. By \cref{lem:divisionlandsininvariantring}, if $X\in H^*(BA_4)=\bbF_2[a,b]^{C_3}$, then $x\in H^*(BA_4)$.
\end{proof}

We introduce the following notation for the inverse of the map from \cref{lem:kameko_decomposition}.

\begin{definition}\label{def:kameko}
We write 
\[\kappa=(\kappa_1,\kappa_a,\kappa_b,\kappa_{ab})\colon \bbF_2 [a,b]\to \bbF_2 [a,b]^4\] 
for the $\bbF_2$-linear map that sends $f=x^2+ay^2+bz^2+abt^2$ to $(x,y,z,t)$.
\end{definition}

\begin{remark}\label{rem:kappaandsquares}
For each $*\in \{1,a,b,ab\}$, the map $\kappa_*$ satisfies $\kappa_*(xy^2)=\kappa_*(x)y$.
From the definition of $\kappa$ we immediately get
\[\kappa_{1}(x)=\kappa_{ab}(abx),\ 
\kappa_{a}(x)=\kappa_{ab}(bx),\text{ and }\
\kappa_{b}(x)=\kappa_{ab}(ax).
\]
\end{remark}
The restriction of $\kappa_{ab}$ to homogeneous elements of even degrees is the down Kameko map from \cite[Definition~1.6.2]{walkerwood2018vol1}.

We will now classify all twisted ideals as defined in \cref{def:twistedideal}. The classification result states that all parameters of twisted ideals are obtained from  $\{1, 0\}$ by a recursion formula. 

\begin{definition}\label{def:TwistedPairsSequence}
For $n\geq 1$, let $(x_n, y_n)$ be the sequence recursively defined by 
$(x_1, y_1)=(u,v)$ and 
$$x_n =ux_{n-1}^2 +y_{n-1}^2 \ \text{  and   }\ y_n=vx_{n-1}^2$$ for $n\geq 2$.
\end{definition}

Note that since $\Sq(u)=u+v+u^2$ and $\Sq(v)=v+uv+v^2$, the ideal $\langle u,v\rangle$ is a Steenrod closed parameter ideal and since $\Sq^1(u)=v$, it is a twisted ideal.
For $n=2$, we have $(x_2, y_2)=(u^3+v^2, vu^2)$, and 
by direct calculation one can show that $\langle x_2, y_2\rangle$ is a twisted ideal.
Our first observation is that all pairs $(x_n, y_n)$ obtained this way are parameters of a twisted ideal for all $n\geq 1$.
 
\begin{lemma}\label{lem:SeqAreGood}
Let $(\mu_n)$ be the sequence recursively defined as $\mu_0=1$ and \[\mu_n = (1+u+v)\mu_{n-1} ^2 +x_n^2\] for $n\geq 1$.
For the sequence of pairs $(x_n,y_n)$ defined in \cref{def:TwistedPairsSequence}, we have
\begin{align*} \Sq(x_n) &= \mu_{n-1} (x_n+y_n)+x_n^2,\\
    \Sq(y_n) & = v\mu_{n-1} x_n + ((u+1) \mu_{n-1} +x_n) y_n +y_n^2.
\end{align*}
In particular for all $n\geq 1$, the pairs $(x_n, y_n)$ generate a Steenrod closed parameter ideal and $\Sq^1(x_n)=y_n$, thus $\langle x_n,y_n\rangle$ is twisted.
\end{lemma}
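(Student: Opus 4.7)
My plan is to prove the two displayed identities simultaneously by induction on $n$, then read off the ``in particular'' statement as consequences. For the base case $n=1$ with $\mu_0 = 1$ and $(x_1, y_1) = (u, v)$, the right-hand sides become $u + v + u^2$ and $vu + v + v^2$, matching the already-computed formulas $\Sq(u) = u + v + u^2$ and $\Sq(v) = v + uv + v^2$.

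For the induction step, I would use the multiplicativity of $\Sq$ together with $\Sq(u) = u+v+u^2$ and $\Sq(v) = v+uv+v^2$ to write
\[\Sq(x_{n+1}) = (u+v+u^2)\,\Sq(x_n)^2 + \Sq(y_n)^2\]
and
\[\Sq(y_{n+1}) = (v+uv+v^2)\,\Sq(x_n)^2,\]
substitute the inductive formulas for $\Sq(x_n)$ and $\Sq(y_n)$, and expand using $(A+B)^2 = A^2+B^2$ in characteristic $2$. The resulting polynomials in $x_n^2$, $y_n^2$, $\mu_{n-1}^2$ then have to be matched with the claimed right-hand sides $\mu_n(x_{n+1}+y_{n+1}) + x_{n+1}^2$ and $v\mu_n x_{n+1} + ((u+1)\mu_n + x_{n+1})y_{n+1} + y_{n+1}^2$. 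This is accomplished by repeatedly substituting $(1+u+v)\mu_{n-1}^2 = \mu_n + x_n^2$, together with $x_{n+1} = ux_n^2 + y_n^2$ and $y_{n+1} = vx_n^2$. The main obstacle is simply the bookkeeping of terms, but the cancellations are forced by the definition of $\mu_n$.

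Steenrod closure of $\langle x_n, y_n\rangle$ is then immediate from the two identities, since $\Sq(x_n)$ and $\Sq(y_n)$ visibly lie in the ideal. For $\Sq^1(x_n) = y_n$ there is no need to extract the degree-$(|x_n|+1)$ component of the full formula; I would argue directly from the recursion using that $\Sq^1$ is a derivation that annihilates squares in characteristic $2$: inductively,
\[\Sq^1(x_{n+1}) = \Sq^1(u)\,x_n^2 + u\,\Sq^1(x_n^2) + \Sq^1(y_n^2) = v\,x_n^2 = y_{n+1},\]
with $\Sq^1(x_1) = \Sq^1(u) = v = y_1$ as the base.

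For the parameter ideal claim, by \cref{lem:Coprime} it suffices to show that $x_n$ and $y_n$ are coprime in the UFD $H^*(BA_4)$. The base case $\gcd(x_1, y_1) = \gcd(u,v) = 1$ holds. For the induction step, since $y_n = vx_{n-1}^2$, I need $v\nmid x_n$ and $\gcd(x_n, x_{n-1}) = 1$. Because $v\mid y_{n-1}$ for all $n\ge 2$ (as $y_1 = v$ and $y_n = vx_{n-1}^2$ for $n\ge 2$), the congruence $x_n \equiv ux_{n-1}^2 \pmod{v}$ together with $v\nmid u$ yields $v\nmid x_n$ inductively. Likewise, any common prime factor of $x_n$ and $x_{n-1}$ divides $y_{n-1}^2 = x_n + ux_{n-1}^2$, hence $y_{n-1}$, contradicting the inductive coprimeness of $x_{n-1}$ and $y_{n-1}$. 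Combined with $\Sq^1(x_n) = y_n$, this exhibits $\langle x_n, y_n\rangle$ as a twisted Steenrod closed parameter ideal.
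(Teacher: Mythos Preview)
Your proposal is correct and follows essentially the same approach as the paper: induction on $n$ for the two $\Sq$-identities (using multiplicativity of $\Sq$ and the recursion), a direct derivation-based argument for $\Sq^1(x_n)=y_n$, and coprimeness to get the parameter ideal claim. The only differences are cosmetic: the paper writes out the inductive expansion of $\Sq(x_{n+1})$ and $\Sq(y_{n+1})$ in full, whereas you sketch it; conversely, your coprimeness argument is spelled out more carefully than the paper's one-line ``clear from the recursion formula''.
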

\begin{proof}
From the recursion formula in \cref{def:TwistedPairsSequence}, it is clear that $x_n$ and $y_n$ are coprime for all $n\geq 1$. Since their degrees are positive, they generate parameter ideals.  
We have that $\Sq^1(x_1)=\Sq^1 (u)=v=y_1$ and $\Sq^1(x_n) = \Sq^1(u)x_{n-1}^2 = vx_{n-1}^2 = y_n$ for $n\ge 2$.

We show that the formulas for $\Sq(x_n)$ and $\Sq(y_n)$ hold by induction on $n$. The base case $n=1$ follows immediately by inserting $x_1=u$, $y_1=v$, and $\mu_0=1$ to the equations. 
Now assume the above equations hold for some pair $(x_n, y_n)$ for $n\geq 1$. We will show that they also hold for the pair $(x_{n+1}, y_{n+1})$. To make the equations simpler, we write $(x,y)=(x_n, y_n)$, $(X,Y)=(x_{n+1}, y_{n+1})$, 
$\mu=\mu_{n-1}$, and $\eta=\mu_n$. So we have $X=ux^2+y^2$, $Y=vx^2$, and $\eta=(1+u+v) \mu ^2 + x^2$. This gives  
\begin{align*}
\Sq(X)=&\Sq(ux^2+y^2)= (u+v+u^2)\Sq(x)^2+\Sq(y)^2  \\
=& (u+v+u^2)(\mu (x+y)+x^2)^2+ (v\mu x)^2+ ((u+1) \mu +x)^2 y^2  +y^4 \\
=& (u+v)(\mu ^2 x^2 +\mu^2 y^2 +x^4)+ u^2 \mu^2 x^2 + u^2 \mu^2 y^2+ u^2 x^4+ v^2 \mu^2 x^2 \\
&+(u^2+1) \mu^2 y^2 +x^2 y^2 +y^4 \\ 
=& (\mu^2 + u\mu^2+v\mu^2+x^2)(ux^2+y^2+vx^2) +(ux^2+y^2)^2 \\
=&\eta (X+Y) +X^2.
\end{align*}
Similarly for $\Sq(Y)$ we have
\begin{align*}
\Sq(Y) =& \Sq( vx^2)= (v+uv+ v^2 ) (\mu (x+y) +x^2 )^2  \\
=&v\mu^2x^2+ v\mu^2y^2 + vx^4 + uv \mu^2 x^2+ uv\mu^2y^2 + uv x^4  \\
&+ v^2 \mu^2 x^2+ v^2 \mu^2 y^2+ v^2x^4 \\
=&v(\mu^2 +u \mu^2 +v \mu^2 +x^2) (ux^2 +y^2)+(1+u^2+uv+v)\mu^2vx^2  \\
&+(x^2+y^2) vx^2 + v^2x^4 \\
=&v\eta X+ ((u+1) \eta +X)Y  +Y^2. \qedhere
\end{align*}
\end{proof} 
We record the following calculation for the classification of Steenrod closed parameter ideals in \cref{sect:MixedCase}.
\begin{lemma}\label{lem:xymumodv}
In $\bbF_2 [u,v]/\langle v\rangle=\bbF_2[u]$, for every $n \geq 1$, we have $x_n\equiv u^{2^n -1}$, $y_n \equiv 0$, and 
  for $n \geq 0$, $$ \mu_n \equiv  \sum_{i=0}^{2^{n+1}-2} u^i = (1+u^{2^ {n+1}-1})/(1+u).$$
\end{lemma}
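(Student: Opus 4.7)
The plan is to prove all three congruences by induction on $n$, exploiting the fact that the three recursions decouple modulo $v$. The congruence $y_n \equiv 0$ holds trivially for all $n \geq 1$: we have $y_1 = v$ and $y_n = v x_{n-1}^2$ for $n \geq 2$, both divisible by $v$. Substituting this into the recursion for $x_n$ yields $x_n \equiv u x_{n-1}^2 \pmod v$ for $n \geq 2$, while setting $v \equiv 0$ in the definition of $\mu_n$ gives $\mu_n \equiv (1+u)\mu_{n-1}^2 + x_n^2 \pmod v$ for $n \geq 1$.

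First I would dispatch the base cases, which are immediate: $x_1 = u = u^{2^1-1}$ and $\mu_0 = 1 = u^0$. For the inductive step on $x_n$, squaring the hypothesis $x_{n-1} \equiv u^{2^{n-1}-1}$ and multiplying by $u$ gives $x_n \equiv u^{2^n-1}$, and in particular $x_n^2 \equiv u^{2^{n+1}-2}$ modulo $v$.

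For the inductive step on $\mu_n$, I would assume $\mu_{n-1} \equiv (1+u^{2^n-1})/(1+u)$, interpreting the fraction as shorthand for the polynomial $\sum_{i=0}^{2^n-2} u^i \in \bbF_2[u]$. Squaring in characteristic $2$ gives $\mu_{n-1}^2 \equiv (1+u^{2^{n+1}-2})/(1+u)^2$, so
\[
(1+u)\mu_{n-1}^2 + x_n^2 \equiv \frac{1+u^{2^{n+1}-2}}{1+u} + u^{2^{n+1}-2} = \frac{1 + u^{2^{n+1}-2} + u^{2^{n+1}-2}(1+u)}{1+u} = \frac{1+u^{2^{n+1}-1}}{1+u},
\]
which is the claimed formula for $\mu_n$. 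Applying the geometric series identity $1+u^k = (1+u)\sum_{i=0}^{k-1} u^i$ in $\bbF_2[u]$ with $k = 2^{n+1}-1$ then produces the polynomial form $\mu_n \equiv \sum_{i=0}^{2^{n+1}-2} u^i$.

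There is no real obstacle; the only thing to watch is to treat the fractional expressions as genuine polynomials via the telescoping identity above, so that the induction runs entirely inside $\bbF_2[u]$.
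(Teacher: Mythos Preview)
Your proposal is correct and follows exactly the approach the paper indicates: the paper's own proof observes that $y_n\equiv 0$ because $y_1=v$ and $y_n=vx_{n-1}^2$, and then simply states that ``the other formulas hold by a straightforward induction.'' You have carried out precisely that induction, with the correct base cases and recursion computations, so there is nothing to add.
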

\begin{proof}
Since $y_1=v$ and $y_n=vx_{n-1}^2$, it follows that $y_n\equiv 0$ modulo $\langle v\rangle$. The other formulas hold by a straightforward induction.
\end{proof}

In addition to the inductive definition of $x_n$, $y_n$, we can express them explicitly as elements in $\bbF_2[a,b]$.
\begin{remark} 
For $n\geq 1$, let $m=2^{n+1}-2.$ Then for all $n \geq 1$, 
$$x_n= \sum _{i=0} ^m a^i b^{m-i} =(a^{m+1} +b^{m+1})/(a+b), $$
$$ y_n =ab \sum _{i=0} ^{m-1} a^i b^{m-i}=ab(a^m+b^m)/(a+b).$$
Note that for $n=1$, we have
$x_1=u=a^2 +ab+b^2=(a^3+b^3)/(a+b)$ and $y_1=v=ab(a+b)=ab(a^2+b^2)/(a+b)$. 
The general case can be proved easily by induction. \end{remark}

Now we prove that every twisted ideal $\langle X,\Sq^1(X)\rangle$ satisfies $X=x_n$ for some $n \geq 1$.
We do this by considering two separate cases.

 \begin{lemma}\label{lem:xsq1xeven} Let $X\in H^*(BA_4)$ be a  homogeneous element of even degree such that $\langle X,\Sq^1(X)\rangle$ is Steenrod closed. Then $\Sq^1(X)=0$ or $X=uy^2+\Sq^1(y)^2$ for some $y\in H^*(BA_4)$ and $\langle y,\Sq^1(y)\rangle$  is also Steenrod closed.
\end{lemma}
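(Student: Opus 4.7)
The plan is to work in the ambient polynomial ring $\bbF_2[a,b]$, in which $H^*(BA_4) = \bbF_2[a,b]^{C_3}$ sits as a subring and $ab$ is not a square. Since $|X|$ is even, $X$ decomposes uniquely as $X = x^2 + abt^2$ with $x,t \in \bbF_2[a,b]$ (the odd-degree summands $ay^2$ and $bz^2$ of the $\kappa$-decomposition vanish). Using that $\Sq^1$ is a derivation killing squares, together with $\Sq^1(ab) = v$, one computes $\Sq^1(X) = vt^2$. Thus $\Sq^1(X) = 0$ iff $t = 0$; assume henceforth that $t \neq 0$. The Steenrod closure of $\langle X, \Sq^1(X)\rangle$ together with $H^1(BA_4) = 0$ and $H^2(BA_4) = \bbF_2 u$ forces $\Sq^2(X) \in \{0, uX\}$. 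I rule out $\Sq^2(X) = 0$ by computing $\Sq^2(X) = \Sq^1(x)^2 + a^2b^2t^2 + ab\Sq^1(t)^2$ and deducing $\Sq^1(x) = abt$ and $\Sq^1(t) = 0$ via the non-squareness of $ab$; then $0 = \Sq^1\Sq^1(x) = \Sq^1(abt) = vt$ forces $t = 0$, a contradiction.

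Hence $\Sq^2(X) = uX$. Expanding this identity in $\bbF_2[a,b]$ and reducing modulo $ab$ yields $\Sq^1(x) = (a+b)x + ab\cdot s$ for some $s \in \bbF_2[a,b]$; substituting back into the original equation produces $(\Sq^1(t) + x + (a+b)t)^2 = ab\cdot s^2$, and non-squareness of $ab$ forces $s = 0$ and $\Sq^1(t) = x + (a+b)t$. A direct verification then gives
\[ut^2 + \Sq^1(t)^2 = (a^2+ab+b^2)t^2 + x^2 + (a+b)^2 t^2 = x^2 + abt^2 = X,\]
so setting $y := t$ establishes $X = uy^2 + \Sq^1(y)^2$. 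For $C_3$-invariance of $y$, I apply the generator $\sigma$ to $X = x^2 + abt^2$ with $\sigma(ab) = ab + b^2$; the equation $\sigma(X) = X$ rearranges to $(\sigma(x) + x + b\sigma(t))^2 = ab(\sigma(t)+t)^2$, and once more non-squareness of $ab$ forces $\sigma(t) = t$, so $y \in H^*(BA_4)$ by \cref{lem:divisionlandsininvariantring}.

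The remaining and most delicate step is showing that $\langle y, \Sq^1(y)\rangle$ is Steenrod closed. The plan is to unpack the Steenrod closure of $\langle X, \Sq^1(X)\rangle$ degree by degree: a Cartan-formula calculation (using that $\Sq^i(u) = 0$ for $i \geq 3$ and that $\Sq^j$ annihilates squares when $j$ is odd) gives
\[\Sq^{2m+1}(X) = v\,\Sq^m(y)^2, \qquad \Sq^{2m}(X) = u\,\Sq^m(y)^2 + u^2\Sq^{m-1}(y)^2 + \Sq^m(\Sq^1(y))^2,\]
each of which by hypothesis lies in $\langle uy^2+\Sq^1(y)^2, vy^2\rangle$. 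Writing these memberships as polynomial identities of the form $A(uy^2+\Sq^1(y)^2) + Bvy^2$ and unwinding them inductively in $m$ should yield the containments $\Sq^m(y), \Sq^m(\Sq^1(y)) \in \langle y, \Sq^1(y)\rangle$ for every $m$. This last extraction is the main obstacle: while the algebraic identity $X = uy^2 + \Sq^1(y)^2$ follows from fairly direct manipulations in $\bbF_2[a,b]$, transferring Steenrod closure from $\langle X, \Sq^1(X)\rangle$ to the ``square-root'' ideal $\langle y, \Sq^1(y)\rangle$ requires careful bookkeeping with the Cartan formula together with the UFD structure of $H^*(BA_4)$ established in \cref{lem:Coprime}.
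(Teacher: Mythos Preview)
Your derivation of $X = uy^2 + \Sq^1(y)^2$ with $y \in H^*(BA_4)$ is correct and matches the paper's argument; the paper packages your ``non-squareness of $ab$'' and ``reducing mod $ab$'' manipulations via the maps $\kappa_1, \kappa_{ab}$ of \cref{def:kameko}, but the content is identical.

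The genuine gap is the part you yourself flag as ``the main obstacle'': you never carry out the extraction of $\Sq^m(y), \Sq^m(\Sq^1(y)) \in \langle y, \Sq^1(y)\rangle$, and the UFD structure you invoke is a red herring here. The paper avoids your degree-by-degree induction by working with the \emph{total} Steenrod square. Write $\Sq(X) = \lambda X + \mu\,\Sq^1(X)$ for some $\lambda, \mu \in H^*(BA_4)$; substituting $X = uy^2 + \Sq^1(y)^2$ and $\Sq^1(X) = vy^2$ gives the single identity
\[\Sq(u)\,\Sq(y)^2 + \Sq(\Sq^1(y))^2 = \lambda u y^2 + \lambda\,\Sq^1(y)^2 + \mu v y^2.\]
One application of $\kappa_{ab}$ in $\bbF_2[a,b]$, together with the key observation $\kappa_{ab}(\Sq(u)) = 1$ (since $\kappa_{ab}(u) = 1$ while $\kappa_{ab}(v) = \kappa_{ab}(u^2) = 0$), extracts $\Sq(y)$ as an explicit $\bbF_2[a,b]$-combination of $y$ and $\Sq^1(y)$. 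Applying $\kappa_1$ then does the same for $\Sq(\Sq^1(y))$, and \cref{lem:steenrodclosedinextension} transfers closure back to $H^*(BA_4)$. Had you executed your plan, you would apply the same $\kappa_{ab}$ trick to each equation $\Sq^{2m}(X) \in \langle X,\Sq^1(X)\rangle$ separately (noting $\kappa_{ab}(u^2) = 0$ kills the $\Sq^{m-1}(y)^2$ term); summing over $m$ recovers the paper's one-shot argument.
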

 
\begin{proof} Recall that $H^*(BA_4)\cong \bbF_2[a,b]^{C_3}$, where a generator of $C_3$ acts on $\bbF_2[a,b]$ via the automorphism $\varphi$ given by $\varphi(a)=b$, $\varphi(b)=a+b$. Write $X$ in the form $X=x^2+aby^2$. Then
\[x^2+aby^2=X=\varphi(X)= \varphi(x)^2+b(a+b)\varphi(y)^2  = \varphi(x)^2+ b^2\varphi(y)^2+ab \varphi(y)^2.\]
Since the way to express $X$ in the form above is unique, we get $\varphi(y)=y$ and hence $y\in H^*(BA_4)$.
Using the Cartan formula, we get
\begin{align*}
\Sq^2(X)&=\Sq^1(x)^2+a^2b^2y^2+ab\Sq^1(y)^2.
\end{align*}
Since $\Sq^2(X)\in \langle X,\Sq^1(X)\rangle$ by assumption, we can write $\Sq^2(X)$ as a linear combination of $X$ and $\Sq^1(X)$ with coefficients in $H^*(BA_4)$. For degree reasons, the coefficient of $\Sq^1(X)$ has to be zero. We thus get
\[\Sq^2(X) \in \{0, uX\}.\]

If $\Sq^2(X)=0$, then
\begin{align*}
0&=\kappa_{1}(\Sq^2(X))=\Sq^1(x)+aby,\\
0&=\kappa_{ab}(\Sq^2(X))=\Sq^1(y).
\end{align*}
Applying $\Sq^1$ on both sides of the first equation yields 
\[0=\Sq^1(\Sq^1(x)+aby) = \Sq^1(\Sq^1(x))+\Sq^1(ab)y+ab\Sq^1(y)=0+vy+0=vy.\]
As $\Sq^1(X)=vy^2$, it follows that $\Sq^1(X)=0$. This completes the first case.

Now assume that $\Sq^2(X)=uX$. Then we have
\begin{align*}
    \Sq^1(x)^2+a^2b^2y^2+ab\Sq^1(y)^2&=(a^2+ab+b^2)(x^2+aby^2)\\
    &=(a^2+b^2)x^2+a^2b^2y^2+ab(x^2+a^2y^2+b^2y^2).    
\end{align*}
Applying $\kappa_1$ and $\kappa_{ab}$ yields
\begin{align*}
    \Sq^1(x)+aby &= (a+b)x+aby,\\
    \Sq^1(y)&=x+ay+by.
\end{align*}
Solving the last equation for $x$ and inserting it into the definition of $X$ gives
\[X=x^2+aby^2=(a^2+b^2)y^2+\Sq^1(y)^2+aby^2 = uy^2+\Sq^1(y)^2.\]
So $X$ has indeed the required form. 

It remains to show that the ideal $\langle y,\Sq^1(y)\rangle$ in $ H^*(BA_4)$ is Steenrod closed. 
By \cref{lem:steenrodclosedinextension}, it suffices to show that the extension $\langle y,\Sq^1(y)\rangle\subset \bbF_2[a,b]$ is Steenrod closed. Since the ideal $\langle X, \Sq^1 (X)\rangle$ is Steenrod closed, we can find coefficients $\lambda,\mu$ such that
$\Sq (X)=\lambda X +\mu \Sq^1 (X)$. 
This gives 
\begin{equation}\label{SqEqn}
\Sq (u) \Sq (y)^2 + \Sq (\Sq ^1 (y) )^2 =\lambda uy^2 +\lambda \Sq^1(y)^2+\mu v y^2.
\end{equation}
Let $*\in \{1,a,b,ab\}$. Using \cref{rem:kappaandsquares}, the map $\kappa_*$ applied to the right-hand side of \eqref{SqEqn} 
yields
\[\kappa_*(\lambda u)y + \kappa_*(\lambda) \Sq^1(y) +\kappa _* (\mu v) y,\]
which is an element of $\langle y, \Sq^1(y)\rangle$.
Since $$\Sq(u)= u+v+u^2= a^2 +ab +b^2+a^2 b +ab^2+ a^4 +(ab)^2+b^4,$$ $\kappa_{ab}$ applied to the left-hand side of \eqref{SqEqn} yields $\Sq(y)$. Hence $\Sq(y)$ is in the ideal $\langle y,\Sq^1(y)\rangle$. Applying $\kappa_1$ to the left-hand side yields
\[ (a+b +a^2+ab+b^2 ) \Sq(y) +\Sq (\Sq^1 (y) ). \]
It follows that $\Sq(\Sq^1(y))$ also lies in the ideal $\langle y,\Sq^1(y)\rangle$.
\end{proof}

If $\Sq^1(X)=0$, then $I=\langle X,\Sq^1(X)\rangle=\langle X\rangle$  is not a parameter ideal. 

\begin{lemma}\label{lem:xsq1xodd}Let $X\in H^*(BA_4)$ be a homogeneous element of odd degree such that $\langle X, \Sq^1(X)\rangle$ is Steenrod closed. Then both $X$ and $\Sq^1(X)$ are divisible by $v$, hence in this case $\langle X, \Sq^1(X) \rangle $ is not a parameter ideal.
\end{lemma}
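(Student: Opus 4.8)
Throughout I work in $\bbF_2[a,b]=H^*(B(\bbZ/2)^2)$, where $H^*(BA_4)=\bbF_2[a,b]^{C_3}$ with $C_3$ acting through $\varphi\colon a\mapsto b,\ b\mapsto a+b$; by \cref{lem:steenrodclosedinextension} it suffices to argue with the extension of $\langle X,\Sq^1(X)\rangle$ in $\bbF_2[a,b]$. Since $X$ has odd degree, the unique decomposition underlying \cref{def:kameko} forces $X=ay^2+bz^2$ for homogeneous $y,z\in\bbF_2[a,b]$, and then $\Sq^1(X)=a^2y^2+b^2z^2=(ay+bz)^2$; comparing $X$ with $\varphi(X)$ gives $z=\varphi^2(y)$ and $\varphi(y)=y+z$. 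I claim it is enough to prove $b\mid y$: applying $\varphi$ and $\varphi^2$ then yields $(a+b)\mid y+z$ and $a\mid z$, and since $a,b,a+b$ are pairwise coprime and $X=ay^2+bz^2$, this gives $v=ab(a+b)\mid X$; moreover $v\mid\Sq^1(X)$ because $\Sq(v)=v(1+u+v)$, so $\Sq^1$ maps $\langle v\rangle$ into itself. Finally $X$ and $\Sq^1(X)$ then share the factor $v$, hence are not coprime, so $\langle X,\Sq^1(X)\rangle$ is not a parameter ideal by \cref{lem:Coprime}.

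The driving computation is the Cartan formula identity $\Sq^2(X)=a\Sq^1(y)^2+b\Sq^1(z)^2$. Writing $\Sq^2(X)$ inside $\langle X,\Sq^1(X)\rangle$ and using $H^1(BA_4)=0$, $H^2(BA_4)=\langle u\rangle$, the coefficient of $\Sq^1(X)$ must vanish and that of $X$ lies in $\{0,u\}$, so $\Sq^2(X)\in\{0,uX\}$. Suppose first $\Sq^2(X)=uX$. Applying the component map $\kappa_a$ and using $\kappa_1(u)=a+b$, $\kappa_{ab}(u)=1$ together with \cref{rem:kappaandsquares} gives $\Sq^1(y)=(a+b)y+bz$, and then $\varphi$ gives $\Sq^1(z)=ay+(a+b)z$. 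Expanding $\Sq^1(\Sq^1(y))$ with the Cartan formula and substituting these two identities collapses it to $b(ay+bz)$; since $\Sq^1\Sq^1=0$ this forces $ay+bz=0$, i.e. $\Sq^1(X)=0$, and \cref{lem:General} then gives $X=v\cdot(\text{square})$, so in particular $b\mid y$.

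It remains to treat $\Sq^2(X)=0$, i.e. $\Sq^1(y)=\Sq^1(z)=0$. By \cref{lem:General}, if $|y|$ is odd then $y=vp^2$, $z=vq^2$, so $v^2\mid X$; hence assume $|y|$ even, $y=p^2$, $z=q^2$, $X=ap^4+bq^4$, with $(p,q)$ still satisfying $q=\varphi^2(p)$, $\varphi(p)=p+q$. Here I run a descent. The Cartan formula gives $\Sq^4(X)=a\Sq^1(p)^4+b\Sq^1(q)^4\in\langle X,\Sq^1(X)\rangle$, so $\Sq^4(X)=\lambda X+\mu(ap^2+bq^2)^2$ with $\lambda\in H^4(BA_4)=\langle u^2\rangle$, $\mu\in H^3(BA_4)=\langle v,w\rangle$. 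Applying $\kappa_a$ and $\kappa_b$ and using that the left sides $\Sq^1(p)^2,\Sq^1(q)^2$ are perfect squares, the right sides must be perfect squares, i.e. their $\kappa_a,\kappa_b,\kappa_{ab}$-components vanish. Feeding in $\kappa_1(u^2)=u$ (with $\kappa_a(u^2)=\kappa_b(u^2)=\kappa_{ab}(u^2)=0$ since $u^2=(a^2+ab+b^2)^2$ is a square), $\kappa_a(v)=b$, $\kappa_b(v)=a$, $\kappa_a(w)=a$, $\kappa_b(w)=a+b$ (with $\kappa_1,\kappa_{ab}$ of $v,w$ zero), and that $p,q$ are $\bbF_2$-linearly independent when $p\neq0$, forces $\mu$ to have no $w$-term and the same $\bbF_2$-coefficient as $\lambda$, and collapses the relation to $\Sq^1(p)=\epsilon\bigl((a+b)p+bq\bigr)$ for one scalar $\epsilon\in\bbF_2$. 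If $\epsilon=1$ this has the same shape as in the previous paragraph, and the $\Sq^1\Sq^1=0$ argument again gives $ap+bq=0$, hence $b\mid p=y$. If $\epsilon=0$ then $\Sq^1(p)=0$, and \cref{lem:General} makes $p$ either $v\cdot(\text{square})$ — whence $b\mid p$ — or a square $p_0^2$, in which case $X=ap_0^{8}+bq_0^{8}$ and the argument repeats with $\Sq^{8}(X)$, and in general with $\Sq^{2^{k+2}}(X)$ at the $k$-th stage, producing $\Sq^1(p^{(k)})=\epsilon_k\bigl((a+b)p^{(k)}+bq^{(k)}\bigr)$. Since $|p|>|p^{(0)}|>|p^{(1)}|>\cdots$ cannot decrease forever, some stage has $\epsilon_k=1$, or $|p^{(k)}|$ odd, or $p^{(k)}=0$; in each case $b\mid p^{(k)}$, hence $b\mid p=y$, which finishes the argument.

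The step I expect to be the main obstacle is the $\kappa$-map bookkeeping inside the descent: one must compute $\kappa_a,\kappa_b,\kappa_{ab}$ of a general candidate coefficient $\lambda\in H^{2^{k+2}}(BA_4)$ and $\mu\in H^{2^{k+2}-1}(BA_4)$ and check that the perfect-square (indeed perfect-$2^{k+1}$-power) constraint on the left side really does cut the relation down to the advertised one-parameter form at every stage, not just at stage $0$ where it is the clean calculation with $u^2$, $v$, $w$ above. An alternative that might streamline this is to reorganize the last case as an induction on degree: $\Sq^1(X)=(ap^2+bq^2)^2$ with $ap^2+bq^2$ an invariant element of smaller odd degree, so it would suffice to show that $\langle ap^2+bq^2,\ \Sq^1(ap^2+bq^2)\rangle$ is again Steenrod closed and then quote the lemma in lower degree; but establishing that Steenrod-closedness from that of $\langle X,\Sq^1(X)\rangle$ is itself a $\kappa$-map computation of the same flavour, so the technical core is the same either way.
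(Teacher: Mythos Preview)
Your treatment of the case $\Sq^2(X)=uX$ and of the subcase $|y|$ odd when $\Sq^2(X)=0$ is correct and essentially matches the paper (with a pleasant twist: your reduction to ``$b\mid y$ plus $C_3$-symmetry'' is not in the paper and tidies the endgame). The gap is exactly where you flag it: the descent via $\Sq^{2^{k+2}}(X)$ for $k\ge 1$. At stage $k=0$ the coefficient spaces are $H^4(BA_4)=\langle u^2\rangle$ and $H^3(BA_4)=\langle v,w\rangle$, and your $\kappa$-bookkeeping genuinely collapses everything to one scalar $\epsilon$. But already at $k=1$ one has $\dim H^8(BA_4)=3$ (basis $u^4,\,uv^2,\,uvw$) and $\dim H^7(BA_4)=2$ (basis $u^2v,\,u^2w$), so there are five free parameters, and the constraint ``$\kappa_a$ of the right-hand side is a perfect fourth power'' is no longer a single linear condition on them. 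You have not shown---and it is not clear---that this cuts down to the one-parameter form $\Sq^1(p^{(k)})=\epsilon_k\bigl((a+b)p^{(k)}+bq^{(k)}\bigr)$ in general; without that, the descent does not close.

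The paper takes precisely your ``alternative'' route, and it is cleaner than you suggest. With $X=ax^2+by^2$ (paper's variables; your $y,z$) and $|x|$ even, set $J=\langle ax+by,\ a^2x+b^2y\rangle$; note $ax+by\in H^*(BA_4)$ as the square root of $\Sq^1(X)$, and its degree is strictly smaller and still odd. The Steenrod-closedness of $J$ is proved once, using the \emph{total} Steenrod square of $X$ rather than a single $\Sq^{2^k}$: one first observes by an easy recursion that $J$ contains all $a^nx+b^ny$ and all $a^nby+ab^nx$; then writing $\Sq(X)=\lambda X+\mu\,\Sq^1(X)$ and applying $\kappa_a,\kappa_b$ gives closed formulas for $\Sq(x),\Sq(y)$ in terms of $x,y$ with coefficients $\kappa_*(\lambda),\kappa_*(\mu)$, from which $a^n\Sq(x)+b^n\Sq(y)\in J$ for all $n\ge 1$ follows directly. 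No case analysis on $\dim H^{2^{k+2}}$ is needed, because $\lambda,\mu$ are never pinned down---only their $\kappa$-components enter, and those combine with the elements already shown to lie in $J$. Induction on degree then gives $v\mid ax+by$ and $v\mid a^2x+b^2y$, and a short factorization (write $ax^2+by^2=(x+y)(ax+by)+xy(a+b)$ and check $ab\mid xy$ from $v\mid (a^2x+b^2y)(x+y)+(ax+by)^2=xy(a+b)^2$) finishes with $v\mid X$.
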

\begin{proof}
If an element $X$ is divisible by $v$, e.g. $X=vz$, we get that 
\[\Sq^1(X)=\Sq^1(vz)=\Sq^1(v)z+v\Sq^1(z)=0+v\Sq^1(z)\]
is also divisible by $v$. So it is enough to show that $X$ is divisible by $v$. The last part will follow from
\cref{lem:Coprime}. Since $0$ is divisible by $v$, we may assume $X\neq 0$.

Write $X=ax^2+by^2$ and thus $\Sq^1(X)=a^2x^2+b^2y^2$. Now use again that $\Sq^2(X) \in \{0,uX\}$ for degree reasons. The Cartan-formula shows
\[\Sq^2(X)= a\Sq^1(x)^2+b\Sq^1(y)^2.\]
Let us first look at the easier case of $\Sq^2(X)=uX$. In this case we have
\[a\Sq^1(x)^2+b\Sq^1(y)^2=(a^2+ab+b^2)(ax^2+by^2).\]
Applying $\kappa_a$ and $\kappa_b$ yields 
\begin{align*}
    \Sq^1(x)=&ax+bx+by,\\
    \Sq^1(y)=&ax+ay+by.
\end{align*}
Now we can use that $\Sq^1\circ \Sq^1=0$:
\begin{align*}
0&=\Sq^1(ax+bx+by) \\
&=a^2x+a\Sq^1(x)+b^2x+b\Sq^1(x)+b^2y+b\Sq^1(y)\\
&=a^2x+(a+b)(ax+bx+by)+b^2x+b^2y+b(ax+ay+by)\\
&=abx+b^2y.
\end{align*}
Thus we get that $ax=by$. Hence $x$ is divisible by $b$ and we can substitute $x=bz$ to obtain $y=az$. Inserting this in the definition of $X$ yields
\[X=(ab^2+a^2b)z^2 = vz^2.\]
Hence $X$ is divisible by $v$. In fact in this case we have $\Sq^1 (X)=0$.

Let us now look at the case of $\Sq^2(X)=0$. From the equation
\[\Sq^2(X)= a\Sq^1(x)^2+b\Sq^1(y)^2\]
we get $\Sq^1(x)=0$ and $\Sq^1(y)=0$.

If $|x|$ is odd, this means by \cref{lem:General} that  $x=vz^2$ and $y=vt^2$
for some $z, t \in \bbF_2 [a,b]$. Thus both $X$ and $\Sq^1(X)$ would be divisible by $v$ in $\bbF_2[a,b]$ and hence also in $H^*(BA_4)$ by \cref{lem:divisionlandsininvariantring}.

If $|x|$ is even, then consider the ideal $J$ generated by $ax+by$ and $\Sq^1(ax+by)=a^2x+b^2y$. We want to 
show that $J$ is again Steenrod closed. Since 
\[
a^{n+1}x+b^{n+1}y = (a+b)(a^nx+b^ny)+ab(a^{n-1}x+b^{n-1}y)\]
it follows inductively that $J$ contains all elements of the form $a^nx+b^ny$ for $n\ge 1$ and thus also all elements of the form 
\[a^nby+ab^nx = a^{n+1}x+b^{n+1}y+(a^n+b^n)(ax+by).\]

Since $\Sq(X)=\Sq(ax^2+by^2)$ is in $\langle X, \Sq^1 (X)\rangle=\langle ax^2+by^2,a^2x^2+b^2y^2\rangle$, there exist homogeneous elements $\lambda,\mu$ such that 
\[(a+a^2)\Sq(x)^2+(b+b^2)\Sq(y)^2=\Sq(ax^2+by^2)=\lambda (ax^2+by^2)+\mu (a^2x^2+b^2y^2).\]
Applying $\kappa_a$ and $\kappa_b$ to both sides of the equation yields
\begin{align*}
\Sq(x)&=\kappa_1(\lambda)x+\kappa_{a} (\lambda b) y+\kappa_a (\mu)(ax+by),\\
\Sq(y)&=\kappa_{b} (\lambda a) x+\kappa_1 (\lambda)y+\kappa_b (\mu)(ax+by).
\end{align*} 
Since $\kappa _a (\lambda b)=\kappa _{ab} (\lambda) b$ and $\kappa _b (\lambda a)=\kappa _{ab} (\lambda) a$, 
we get that for any $n\ge 1$: 
\begin{align*}
    a^n\Sq(x)+b^n\Sq(y)=&\kappa_1 (\lambda)(a^nx+b^ny)+\kappa_{ab}(\lambda)(a^nby+ab^nx)\\
    &+(a^n\kappa_{a}(\mu)+b^n\kappa_{b}(\mu))(ax+by)
\end{align*}
is in $J$.
Since $\Sq(ax+by)=(a+a^2)\Sq(x)+(b+b^2)\Sq(y)$ and $\Sq(a^2x+b^2y)=(a^2+a^4)\Sq(x)+(b^2+b^4)\Sq(y)$, we see that 
$J$ is Steenrod closed.

Note that $|X|>1$ since $H^1(BA_4)=0$. Thus the degree of $ax+by$ is smaller than the degree of $X$. Since $ax+by$ is the square root of $\Sq^1(X)$ it is also in $H^*(BA_4)$ by \cref{lem:divisionlandsininvariantring}. It follows by induction that $v=ab(a+b)$ divides both $ax+by$ and $a^2x+b^2y$.

We show that $v$ divides $X=ax^2+by^2$. Since $ax^2+by^2= (x+y)(ax+by) + xy(a+b)$ and $v|(ax+by)$, it suffices to show that $v|xy(a+b)$. Since 
\[v | (a^2x+b^2y)(x+y)-(ax+by)^2 = xy(a^2+b^2)\]
and $v=ab(a+b)$ it follows that $ab|xy$ and hence $v| xy(a+b)$.
\end{proof}

We summarize the results of this section in the following theorem.

\begin{theorem}\label{thm:classification_twisted_pairs}
Let $X$ be a homogeneous element in the cohomology ring $H^*(BA_4)$. The ideal $\langle X, \Sq^1(X)\rangle$ is a Steenrod closed parameter ideal if and only if there exists an $n\geq 1$ such that $X=x_n$, where $x_n$ is defined recursively via $x_1=u$ and $x_{n+1} = ux_n^2+\Sq^1(x_n)^2$ for $n\geq 1$. 
\end{theorem}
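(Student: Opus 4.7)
The $(\Leftarrow)$ implication is already handled by \cref{lem:SeqAreGood}, which establishes that for every $n\geq 1$ the pair $\{x_n, \Sq^1(x_n)\}$ generates a Steenrod closed parameter ideal. The content lies in the forward direction, which I plan to prove by strong induction on the degree $|X|$.

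Suppose $\langle X, \Sq^1(X)\rangle$ is a Steenrod closed parameter ideal. Two degree-type observations come first. By \cref{lem:xsq1xodd}, if $|X|$ were odd then $v$ would divide both $X$ and $\Sq^1(X)$, contradicting the coprimality that \cref{lem:Coprime} forces on parameter ideals; hence $|X|$ is even. Similarly, $\Sq^1(X) \neq 0$, since otherwise $X$ and $\Sq^1(X)=0$ cannot be coprime. The non-trivial alternative of \cref{lem:xsq1xeven} then yields $X = uy^2 + \Sq^1(y)^2$ for some $y \in H^*(BA_4)$ with $\langle y, \Sq^1(y)\rangle$ Steenrod closed; a direct application of the Cartan formula together with $\Sq^1 \circ \Sq^1 = 0$ gives $\Sq^1(X) = vy^2$.

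The base case is $|y| = 0$: since $X \neq 0$ the element $y$ must be nonzero, forcing $y = 1$ and $X = u = x_1$. For the inductive step I have $|y| > 0$, and I want to apply the induction hypothesis to $y$. What needs to be checked is that $\langle y, \Sq^1(y)\rangle$ is again a Steenrod closed parameter ideal: Steenrod closedness is given by \cref{lem:xsq1xeven}, and the coprimality of $y$ and $\Sq^1(y)$ follows because any irreducible common factor $f$ of $y$ and $\Sq^1(y)$ would divide both $uy^2 + \Sq^1(y)^2 = X$ and $vy^2 = \Sq^1(X)$, contradicting \cref{lem:Coprime}. The induction hypothesis then gives $y = x_k$ for some $k \geq 1$, and the defining recursion of $x_{n+1}$ shows $X = u x_k^2 + \Sq^1(x_k)^2 = x_{k+1}$, completing the proof.

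The essential work has already been absorbed into the preparatory lemmas \cref{lem:xsq1xeven} and \cref{lem:xsq1xodd}, so what is left for the theorem is mostly bookkeeping. The only subtle step is the coprimality verification above, which is exactly what is required to propagate the parameter-ideal hypothesis down one level of the recursion and thereby license the induction on $|X|$.
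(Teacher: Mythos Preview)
Your proof is correct and follows essentially the same approach as the paper's: the backward implication via \cref{lem:SeqAreGood}, then strong induction on $|X|$ using \cref{lem:xsq1xodd} to exclude odd degree and \cref{lem:xsq1xeven} to descend, with the coprimality of $y$ and $\Sq^1(y)$ checked by pushing a hypothetical common factor forward to $X$ and $\Sq^1(X)$. Your explicit computation $\Sq^1(X)=vy^2$ makes this last step slightly more transparent than the paper's one-line assertion, but the argument is the same.
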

\begin{proof} By \cref{lem:SeqAreGood}, we know that
the ideal $\langle x_n,\Sq^1(x_n)\rangle$ is a Steenrod closed parameter ideal for any $n\ge 1$. Conversely, 
let $X \in H^* (BA_4)$ be a homogeneous element such that $\langle X,\Sq^1(X)\rangle$ is a Steenrod closed parameter ideal. In particular, we have $\Sq^1(X)\neq 0$. We show by induction on the degree of $X$ that it is of the given form. If $|X|$ is odd, then \cref{lem:xsq1xodd} tells us that $v$ divides $X$, and hence $X$ and $\Sq^1(X)$ cannot be coprime. If $|X|$ is even, then it follows from \cref{lem:xsq1xeven} that
$X=uY^2+\Sq^1(Y)^2$ for some $Y$ such that $\langle Y,\Sq^1(Y)\rangle$ is also Steenrod closed. If $Y$ and $\Sq^1(Y)$ had a common divisor, the same divisor would also divide $X$ and $\Sq^1(X)$. So
$Y$ and $\Sq^1(Y)$ are coprime. If $|Y|=0$, then $X=u=x_1$. If $|Y|>0$, then $\langle Y,\Sq^1(Y)\rangle$ is a parameter ideal and hence by induction assumption we have $Y=x_n$ for some $n\geq 1$ and thus $X=x_{n+1}$. 
\end{proof}

\section{Observations on nontwisted ideals}\label{sec:obsNonTwisted}
In this section we are interested in nontwisted Steenrod closed parameter ideals of $H^*(BA_4)$. By \cref{lem:Sq1Xneq0means}, these are the Steenrod closed parameter ideals such that $\Sq^1$ of the parameter of lower degree is zero. The results will be used in the following two sections to classify all Steenrod closed parameter ideals.

\begin{lemma}\label{lem:sq1xeq0sq1yeq0} Assume that $\langle X,Y\rangle$ is a Steenrod closed parameter ideal in $H^*(BA_4)$ where $X$ and $Y$ are homogeneous elements with $|X|<|Y|$ and 
that $\Sq^1(X)=0$. There exists a homogeneous element $Y'\in H^*(BA_4)$ with $\langle X,Y\rangle=\langle X,Y'\rangle$ and $\Sq^1(Y')=0$. 
\end{lemma}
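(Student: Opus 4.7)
The plan is to produce $Y'$ by correcting $Y$ by an $H^*(BA_4)$-multiple of $X$. Since $\langle X,Y\rangle$ is Steenrod closed, we can write
\[\Sq^1(Y)=\alpha X+\beta Y\]
for homogeneous $\alpha,\beta\in H^*(BA_4)$. Counting degrees forces $|\beta|=1$, and because $H^1(BA_4)=0$ we must have $\beta=0$, giving $\Sq^1(Y)=\alpha X$ with $|\alpha|=|Y|+1-|X|$.

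Next I would apply $\Sq^1$ to this identity. Using $\Sq^1\circ\Sq^1=0$, $\Sq^1(X)=0$, and the Leibniz rule, one gets $\Sq^1(\alpha)X=0$, and since $H^*(BA_4)$ is an integral domain and $X\neq 0$, this yields $\Sq^1(\alpha)=0$. The hypothesis $|X|<|Y|$ gives $|\alpha|\ge 2$, so $\alpha\in \langle a,b\rangle\cap H^*(BA_4)$. By \cref{rem:Sq1exact}, the Bockstein sequence on positive-degree invariants for the $C_3$-action is exact, so there exists a homogeneous $\gamma\in H^*(BA_4)$ with $\Sq^1(\gamma)=\alpha$ and $|\gamma|=|\alpha|-1=|Y|-|X|$.

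Finally I would set $Y':=Y+\gamma X$. This is homogeneous of degree $|Y|$, satisfies $\langle X,Y'\rangle=\langle X,Y\rangle$ trivially, and
\[\Sq^1(Y')=\Sq^1(Y)+\Sq^1(\gamma)X+\gamma\Sq^1(X)=\alpha X+\alpha X+0=0,\]
as required.

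The only step that is not purely formal is the existence of $\gamma$, i.e.\ the solvability of $\Sq^1(\gamma)=\alpha$; this is exactly where one needs exactness of $\Sq^1$ on the invariant subring, and so where the fact that $|C_3|$ is coprime to the characteristic (via the Reynolds operator) is essential. Verifying that $\alpha$ sits in the positive-degree part, which depends on the strict inequality $|X|<|Y|$, is the small but crucial point to get right.
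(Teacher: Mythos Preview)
Your proof is correct and follows essentially the same approach as the paper: write $\Sq^1(Y)=\alpha X$ using $H^1(BA_4)=0$, deduce $\Sq^1(\alpha)=0$ from $(\Sq^1)^2=0$ and the integral domain property, invoke exactness of $\Sq^1$ on positive-degree invariants to find a preimage $\gamma$, and set $Y'=Y+\gamma X$. Your extra care in verifying $|\alpha|\ge 2$ so that \cref{rem:Sq1exact} applies is a detail the paper leaves implicit.
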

\begin{proof}
Since $H^1(BA_4)=0$, we have $\Sq^1(Y)=\lambda X$ for some $\lambda\in H^{|Y|+1-|X|}(BA_4)$. Since $(\Sq^1)^2=0$ and $\Sq^1(X)=0$, we obtain
\[0=\Sq^1(\lambda X) = \Sq^1(\lambda)X+\lambda\Sq^1(X)=\Sq^1(\lambda)X.\]
Since $H^*(BA_4)$ does not have zero divisors and $X\neq 0$, we have $\Sq^1(\lambda)=0$ and by exactness of $\Sq^1$, there exists $\lambda'\in H^{|Y|-|X|}(BA_4)$ such that $\Sq^1(\lambda')=\lambda$. 

Setting $Y'=Y+\lambda'X$ provides a homogeneous element such that $\langle X,Y\rangle =\langle X, Y'\rangle$ and
$\Sq^1(Y+\lambda'X) = \lambda X+\lambda X=0$.
\end{proof}

We will be interested in Steenrod closed parameter ideals generated by squares. Therefore we introduce the following notation.

\begin{definition}[{\cite[\S2.6]{derksenkemper2015}}]\label{def:squaringgeneratorsofideal}
For an ideal $I$ of a commutative ring $R$ of characteristic $2$, we define $I^{[2]}$ to be the ideal generated by all $x^2$ with $x\in I$.
\end{definition}
If $I=\langle x,y\rangle$, then $I^{[2]}=\langle x^2, y^2\rangle$. Note that $I^{[2]}$ differs from $I^2=\langle x^2,xy,y^2\rangle$ in general.

\begin{lemma}\label{lem:ImapstoI[2]_injective}
The operation $I\mapsto I^{[2]}$ is injective on parameter ideals in $H^*(BA_4)$.
\end{lemma}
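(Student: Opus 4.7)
My plan is to reconstruct a parameter ideal $I=\langle X,Y\rangle$ from the data of $I^{[2]}=\langle X^2,Y^2\rangle$ by analyzing the lowest-degree homogeneous component of $I^{[2]}$.

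First I would observe that $I^{[2]}$ is itself a parameter ideal: the generators $X$ and $Y$ are coprime by \cref{lem:Coprime}, so $X^2$ and $Y^2$ are coprime since $H^*(BA_4)$ is a unique factorization domain. Using that $H^*(BA_4)$ is Cohen--Macaulay, the Hilbert series of $H^*(BA_4)/\langle X^2,Y^2\rangle$ equals the Hilbert series of $H^*(BA_4)$ times $(1-t^{2|X|})(1-t^{2|Y|})$. Hence, if $I^{[2]}=J^{[2]}$ for two parameter ideals $I=\langle X,Y\rangle$ and $J=\langle X',Y'\rangle$, the multisets $\{|X|,|Y|\}$ and $\{|X'|,|Y'|\}$ agree; after reordering so that $|X|\le |Y|$ and $|X'|\le|Y'|$, this yields $|X|=|X'|$ and $|Y|=|Y'|$.

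I would then split into two cases according to whether $|X|<|Y|$ or $|X|=|Y|$. If $|X|<|Y|$, the degree-$2|X|$ component of $I^{[2]}$ is the $\bbF_2$-line spanned by $X^2$, so $X^2=(X')^2$, and unique square roots in the characteristic-$2$ integral domain $H^*(BA_4)$ yield $X=X'$. Expanding $Y^2=\alpha X^2+\beta (Y')^2$ in $J^{[2]}$ with $\beta\in H^0(BA_4)=\bbF_2$, coprimality of $X$ and $Y$ rules out $\beta=0$; therefore $\beta=1$ and $(Y+Y')^2=\alpha X^2$, and the UFD property forces $X\mid Y+Y'$, whence $Y'\in\langle X,Y\rangle$ and $J\subset I$. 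If instead $|X|=|Y|$, the degree-$2|X|$ component is the full two-dimensional $\bbF_2$-subspace $\{0,X^2,Y^2,X^2+Y^2\}$; taking unique square roots of its nonzero elements identifies $\{X,Y,X+Y\}$ with $\{X',Y',X'+Y'\}$, so $\{X',Y'\}\subset\{X,Y,X+Y\}\subset\langle X,Y\rangle$ and again $J\subset I$. In either case, symmetry yields $I=J$.

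The hard part will be the equal-degree case $|X|=|Y|$, where the lowest-degree component of $I^{[2]}$ is no longer a line and so no single generator can be pinned down; one must instead recover the whole ideal from the two-dimensional space of homogeneous elements of degree $2|X|$.
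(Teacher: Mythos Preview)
Your proposal is correct and follows essentially the same approach as the paper: both split into the cases $|X|<|Y|$ and $|X|=|Y|$, use uniqueness of square roots in the characteristic-$2$ integral domain together with the UFD property, and conclude $J\subset I$ by symmetry. The only cosmetic differences are that you invoke the Hilbert series to match the degree multisets (the paper gets this from a dimension count on the lowest-degree homogeneous component), and in the equal-degree case you enumerate the four elements explicitly while the paper simply notes that $\bbF_2$-coefficients are automatically squares and hence $(X')^2=\alpha^2 X^2+\beta^2 Y^2=(\alpha X+\beta Y)^2$.
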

\begin{proof}
Let $I=\langle x,y\rangle$ and $J=\langle x',y'\rangle$ be parameter ideals in $H^*(BA_4)$ with $I^{[2]}=J^{[2]}$. By symmetry, it is enough to show that $J\subset I$. If $|x|=|y|$, then $|x'|=|y'|$. Hence $(x')^2$ and $(y')^2$ are $\bbF_2$-linear combinations of $x^2$ and $y^2$. In particular, the coefficients are squares themselves, say $(x')^2=\alpha^2 x^2 +\beta^2 y^2$. It follows that $x'=\alpha x + \beta y$. Similarly, $y'\in I$ and hence $J\subset I$.

If $|x|\neq |y|$, then after renaming the parameters, we may assume that $|x|<|y|$ and $|x'|<|y'|$. For degree reasons and since we work over $\bbF_2$, we have $(x')^2=x^2$ and $(y')^2= y^2+\lambda x^2$ for some homogeneous element $\lambda\in H^*(BA_4)$. By \cref{lem:divisionlandsininvariantring}, squares are unique in $H^*(BA_4)$, thus $x^2 =(x')^2$ implies $x'=x$.

The equation $(y')^2=y^2+\lambda x^2$ gives that $x^2$ divides $(y'+y)^2$.
Since $H^*(BA_4)$ is a unique factorization domain, we obtain that $x$ divides $y+y'$. Let $\mu \in H^* (BA_4)$ be such that
$y+y'=\mu x$. Then $y'=y+\mu x$ and $x'=x$, hence $J\subset I$.
\end{proof}

\begin{lemma}\label{lem:EvenEvenCase} Let $I$ be an ideal in $H^*(BA_4)$.  Then $I^{[2]}$ is Steenrod  closed if and only if $I$ is Steenrod closed. Moreover, $I^{[2]}$ is a parameter ideal if and only if $I$ is a parameter ideal.
\end{lemma}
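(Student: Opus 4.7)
The plan is to prove the Steenrod-closed and parameter-ideal equivalences separately, both relying on the characteristic-two identity
\[
f = \kappa_1(f)^2 + a\kappa_a(f)^2 + b\kappa_b(f)^2 + ab\kappa_{ab}(f)^2
\]
in $\bbF_2[a,b]$ coming from the $\bbF_2[a^2,b^2]$-module basis $\{1,a,b,ab\}$.

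For the Steenrod-closed part, first note that for any ideal $I=\langle x_1,\dots,x_n\rangle\subset H^*(BA_4)$ the extension of $I^{[2]}$ to $\bbF_2[a,b]$ equals $\langle x_1^2,\dots,x_n^2\rangle=(\bbF_2[a,b]\cdot I)^{[2]}$, since squaring distributes over sums in characteristic two. Thus by \cref{lem:steenrodclosedinextension} it suffices to prove the equivalence for an ideal $J\subset \bbF_2[a,b]$. The forward direction is immediate from $\Sq(y^2)=\Sq(y)^2$ together with the fact that $\Sq$ is a ring homomorphism on $\bbF_2[a,b]$: $\Sq$ sends the generating set $\{y^2:y\in J\}$ of $J^{[2]}$ into $J^{[2]}$ whenever $J$ is Steenrod closed. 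For the converse, given $y\in J$ the element $\Sq(y)^2=\Sq(y^2)$ lies in $J^{[2]}$, so $\Sq(y)^2=\sum_i r_i y_i^2$ with $y_i\in J$ and $r_i\in \bbF_2[a,b]$. Expanding each $r_i$ via $\kappa$ and comparing the coefficient of $1$ in the direct sum decomposition above yields $\Sq(y)=\sum_i \kappa_1(r_i)y_i\in J$.

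For the parameter-ideal part, the forward implication is easy: if $I=\langle x,y\rangle$ is a parameter ideal then $x,y$ are coprime by \cref{lem:Coprime}, hence so are $x^2,y^2$ in the UFD $H^*(BA_4)$, and $I^{[2]}=\langle x^2,y^2\rangle$ is again a parameter ideal. Conversely, if $I^{[2]}$ is a parameter ideal then it is minimally $2$-generated (principal ideals in the $2$-dimensional Cohen--Macaulay domain $H^*(BA_4)$ have height $1$ and so cannot be parameter ideals). Since $I^{[2]}$ is generated by the squares $\{z^2:z\in I\}$, graded Nakayama allows me to pick $x,y\in I$ with $I^{[2]}=\langle x^2,y^2\rangle$. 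Coprimeness of $x^2,y^2$ then gives coprimeness of $x,y$, so $\langle x,y\rangle\subseteq I$ is a parameter ideal by \cref{lem:Coprime}. To conclude $I=\langle x,y\rangle$, for any $z\in I$ I write $z^2=\alpha x^2+\beta y^2$ and apply the same $\kappa$-argument to obtain $z=\kappa_1(\alpha)x+\kappa_1(\beta)y$ in $\bbF_2[a,b]$, which lies in $\bbF_2[a,b]\cdot\langle x,y\rangle\cap H^*(BA_4)=\langle x,y\rangle$ by \cref{lem:contractionofideal}.

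The main conceptual obstacle in both halves is the same: I know a square lies in an ideal generated by other squares, and I need to extract the square root. The uniqueness of the $\kappa$-decomposition forces the $a$-, $b$-, and $ab$-parts on the right-hand side to vanish, leaving the coefficient of $1$ as the desired square root on the left-hand side.
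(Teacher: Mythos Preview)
Your proof is correct and follows essentially the same strategy as the paper: the Steenrod-closed equivalence is handled via $\Sq(y^2)=\Sq(y)^2$ together with the $\kappa_1$-extraction argument and \cref{lem:steenrodclosedinextension}, exactly as in the paper. For the parameter-ideal equivalence the paper simply invokes that $x,y$ are coprime iff $x^2,y^2$ are; your treatment of the converse direction (Nakayama to produce $x,y\in I$ with $I^{[2]}=\langle x^2,y^2\rangle$, then the $\kappa_1$-argument and \cref{lem:contractionofideal} to show $I=\langle x,y\rangle$) is more careful than the paper's one-line proof, which tacitly assumes $I$ is already two-generated, but the underlying ideas are the same.
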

\begin{proof}
If $I$ is Steenrod closed, then $I^{[2]}$ is Steenrod closed since $I^{[2]}$ is the extension of the ideal $I$ with respect to the Frobenius homomorphism on $H^*(BA_4)$ and the Frobenius homomorphism commutes with $\Sq$.

Conversely, suppose that $I^{[2]}$ is Steenrod closed. We show that $I$ is Steenrod closed. If $x\in I$ then $\Sq(x^2)\in I^{[2]}$ by assumption and we can find a linear combination \[\Sq(x)^2=\Sq(x^2)= \sum_i \lambda_i y_i^2\]
with $y_i\in I$. Applying $\kappa_1$ yields with the help of \cref{rem:kappaandsquares}
\[\Sq(x)=\sum_i \kappa_1(\lambda_i)y_i.\]
Since the coefficients $\kappa_1(\lambda_i)\in \bbF_2[a,b]$ need not be in $H^*(BA_4)=\bbF_2[a,b]^{C_3}$, this only shows that the extension of $I$ to $\bbF_2[a,b]$ is Steenrod closed. Nevertheless, it follows that $I$ is Steenrod closed by \cref{lem:steenrodclosedinextension}.

The second statement holds since two homogeneous elements $x,y\in H^*(BA_4)$ are coprime if and only if $x^2$ and $y^2$ are so.
\end{proof}

The following is immediate from the above results.
\begin{proposition}\label{pro:square_of_good_generators} If $I$ is a Steenrod closed parameter ideal in $H^*(BA_4)$ generated by parameters of even degrees, then $I=J^{[2]}$ for some Steenrod closed parameter ideal $J$.
\end{proposition}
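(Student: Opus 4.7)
The plan is to chain together the preceding lemmas: use the parity constraints on $\Sq^1$ to force the lower-degree generator to be $\Sq^1$-closed, modify the other generator so both are $\Sq^1$-closed, extract square roots using \cref{lem:General}, and then invoke \cref{lem:EvenEvenCase} to conclude that the ideal generated by the square roots is itself a Steenrod closed parameter ideal.

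In detail, let $\{X,Y\}$ be a homogeneous system of parameters for $I$ with both $|X|$ and $|Y|$ even. By \cref{lem:lowergeneratorunique} the degrees differ, and we may assume $|X|<|Y|$. \cref{lem:Sq1Xneq0means} gives that either $\Sq^1(X)=0$ or $Y=\Sq^1(X)$; the latter is ruled out because $|\Sq^1(X)|=|X|+1$ is odd while $|Y|$ is even. Hence $\Sq^1(X)=0$, and \cref{lem:sq1xeq0sq1yeq0} produces a replacement $Y'$ of the same degree as $Y$ with $\langle X,Y\rangle=\langle X,Y'\rangle$ and $\Sq^1(Y')=0$.

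Now both $X$ and $Y'$ are homogeneous elements of $H^*(BA_4)$ of even degree with trivial $\Sq^1$, so \cref{lem:General} yields $x,y\in H^*(BA_4)$ with $X=x^2$ and $Y'=y^2$. Therefore
\[
I=\langle X,Y\rangle=\langle X,Y'\rangle=\langle x^2,y^2\rangle=J^{[2]}
\qquad\text{where }J=\langle x,y\rangle.
\]
Finally, \cref{lem:EvenEvenCase} says that $J^{[2]}$ being Steenrod closed (respectively a parameter ideal) is equivalent to $J$ being Steenrod closed (respectively a parameter ideal); since $I=J^{[2]}$ enjoys both properties by hypothesis, so does $J$.

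There is really no hard step here: the content is all packaged in the earlier lemmas, and the only thing to notice is the parity argument that forces $\Sq^1(X)=0$ from the even-degree hypothesis, which lets \cref{lem:sq1xeq0sq1yeq0} and \cref{lem:General} be applied to both generators. One small point worth checking at the end is that the $J$ we produced is independent in the appropriate sense; but \cref{lem:ImapstoI[2]_injective} guarantees that $J$ is uniquely determined by $I$, which is a pleasant corollary of the construction rather than part of the proof.
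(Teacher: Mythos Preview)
Your proof is correct and follows essentially the same route as the paper: use the even-degree hypothesis to rule out the twisted case so that $\Sq^1(X)=0$, apply \cref{lem:sq1xeq0sq1yeq0} to adjust $Y$, extract square roots via \cref{lem:General}, and conclude with \cref{lem:EvenEvenCase}. The only cosmetic difference is that the paper phrases the parity observation as ``$I$ cannot be twisted'' rather than directly comparing the parities of $|Y|$ and $|\Sq^1(X)|$.
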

\begin{proof}
Let $X$ and $Y$ be parameters of $I$ with $|X|<|Y|$. Since $X$ and $Y$ have even degrees, $I$ cannot be twisted. 
By \cref{lem:Sq1Xneq0means}, we have for a nontwisted $I=\langle X,Y\rangle$ that $\Sq^1(X)=0$ and 
by \cref{lem:sq1xeq0sq1yeq0}, we can pick the second parameter $Y'$ such that $\Sq^1(Y')=0$. 
By \cref{lem:General}, there exist $x,y$ such that $X=x^2$ and $Y'=y^2$. Hence if we take $J=\langle x,y\rangle$, 
then $I=J^{[2]}$. The lemma now follows from \cref{lem:EvenEvenCase}.
\end{proof}

Now we consider the case where one of the degrees $|X|$ and $|Y|$ is odd. For this case the following lemma is useful.

\begin{lemma}\label{lem:EvenOddCase}
Let $x,y\in H^*(BA_4)$ be homogeneous elements such that $|x|\neq 0$ and $\langle vx^2, y^2\rangle$ is a Steenrod closed parameter ideal. Then $\langle x,y\rangle$ is a Steenrod closed parameter ideal.
\end{lemma}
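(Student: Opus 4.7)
The plan is to mimic the strategy of \cref{lem:EvenEvenCase}: show directly that $\Sq(x)$ and $\Sq(y)$ lie in the extension of $J=\langle x,y\rangle$ to $\bbF_2[a,b]$, and then appeal to \cref{lem:steenrodclosedinextension} to conclude that $J$ itself is Steenrod closed in $H^*(BA_4)$. The parameter-ideal claim is an easy warm-up: since $\langle vx^2,y^2\rangle$ is a parameter ideal, $vx^2$ and $y^2$ are coprime by \cref{lem:Coprime}, hence so are $x$ and $y$; together with $|x|>0$ (and $y\neq 0$), this gives that $\langle x,y\rangle$ is a parameter ideal by another application of \cref{lem:Coprime}.

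For $\Sq(y)$, I would use that $\Sq(y)^2=\Sq(y^2)\in \langle vx^2,y^2\rangle$ and write
\[
\Sq(y)^2=\lambda vx^2+\mu y^2
\]
for some $\lambda,\mu\in H^*(BA_4)$. Applying $\kappa_{1}$ and using $\kappa_{1}(gz^2)=\kappa_{1}(g)z$ (\cref{rem:kappaandsquares}) together with $\kappa_{1}(\Sq(y)^2)=\Sq(y)$ produces an expression
\[
\Sq(y)=\kappa_{1}(\lambda v)\,x+\kappa_{1}(\mu)\,y
\]
in $\bbF_2[a,b]$, so $\Sq(y)$ lies in the extension of $J$ there.

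For $\Sq(x)$ the argument is analogous but requires first dividing by $v$. From $\Sq(vx^2)=\Sq(v)\Sq(x)^2=v(1+u+v)\Sq(x)^2\in\langle vx^2,y^2\rangle$, write
\[
v(1+u+v)\Sq(x)^2=\alpha vx^2+\beta y^2
\]
with $\alpha,\beta\in H^*(BA_4)$. Since $v$ and $y^2$ are coprime in the UFD $\bbF_2[a,b]$ (this uses that $vx^2$ and $y^2$ are coprime), $v$ divides $\beta$; and by \cref{lem:divisionlandsininvariantring}(1) the quotient $\beta'=\beta/v$ still lies in $H^*(BA_4)$. Cancelling $v$ yields
\[
(1+u+v)\Sq(x)^2=\alpha x^2+\beta' y^2.
\]
Now I would apply $\kappa_{ab}$. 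A direct check from the definition of $\kappa$ shows $\kappa_{ab}(u)=1$ and $\kappa_{ab}(v)=0$, while $\kappa_{ab}$ of any square vanishes; hence $\kappa_{ab}((1+u+v)\Sq(x)^2)=\Sq(x)$, giving
\[
\Sq(x)=\kappa_{ab}(\alpha)\,x+\kappa_{ab}(\beta')\,y\in\langle x,y\rangle\subset\bbF_2[a,b].
\]

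With $\Sq(x)$ and $\Sq(y)$ both in the extension of $J$ to $\bbF_2[a,b]$, the Cartan formula implies the extension is Steenrod closed, and \cref{lem:steenrodclosedinextension} transfers this back to $J\subset H^*(BA_4)$. The main subtlety I expect is ensuring that the division by $v$ in the $\Sq(x)$ step is legal at the level of $H^*(BA_4)$ (not merely in $\bbF_2[a,b]$); this is exactly where the coprimality of $v$ and $y^2$ together with \cref{lem:divisionlandsininvariantring} is needed.
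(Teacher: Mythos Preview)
Your proposal is correct and follows essentially the same approach as the paper: reduce to the extension in $\bbF_2[a,b]$ via \cref{lem:steenrodclosedinextension}, handle $\Sq(y)$ by applying $\kappa_1$, and handle $\Sq(x)$ by first dividing out $v$ (using coprimality of $v$ and $y^2$) and then applying $\kappa_{ab}$ using $\kappa_{ab}(1+u+v)=1$. The only cosmetic differences are that the paper deduces the parameter-ideal claim from the containment $\langle vx^2,y^2\rangle\subset\langle x,y\rangle$ rather than via coprimality, and it does not invoke \cref{lem:divisionlandsininvariantring} for $\beta/v$ since the entire argument already takes place in $\bbF_2[a,b]$, where that step is unnecessary.
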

\begin{proof}
If $\langle vx^2,y^2\rangle$ is a parameter ideal, then so is $\langle x,y\rangle$ since the former is contained in the latter. It remains to check that $\langle x,y\rangle$ is closed under the Steenrod operations. By \cref{lem:steenrodclosedinextension}, it suffices to show that its extension $J$ in $\bbF_2[a,b]$ is Steenrod closed.
Let us first look at 
\[\Sq(y)^2 = \Sq(y^2)= \lambda vx^2+\mu y^2.\]
Applying $\kappa_1$ yields
\[\Sq(y) = \kappa_1(\lambda v)x+\kappa_1(\mu)y\]
and thus $\Sq(y)$ lies in $J$. 
The trickier part is to show that $\Sq(x)$ belongs to $J$. Since $\langle vx^2, y^2 \rangle$ 
is a Steenrod closed parameter ideal, we have
\[v(1+u+v)\Sq(x)^2 = \Sq(vx^2)=\alpha v x^2+\theta y^2\]
for some $\alpha$ and $\theta$.
Since $y$ is coprime to $vx^2$, $\theta$ has to be divisible by $v$. We thus get with $\theta=v\theta '$:
\begin{align}\label{eq:something}(1+u+v)\Sq(x)^2 = \alpha x^2+\theta ' y^2.\end{align}
Recall that 
\[1+u+v = 1+a^2+ab+b^2 + a^2b+ab^2=(1+a^2+b^2)+ab^2+a^2b+ab\]
and thus $\kappa_{ab}(1+u+v)=1$. Applying $\kappa_{ab}$ to \eqref{eq:something}
yields $\Sq(x)=\kappa_{ab}(\alpha )x+\kappa_{ab}(\theta' )y$ and thus $\Sq(x)\in J$.
\end{proof}
Recall from \cref{lem:lowergeneratorunique} that the degrees of parameters generating a Steenrod closed parameter ideal are independent of the system of parameters of the ideal.
\begin{proposition}\label{pro:generatorsoddeven}
For a nontwisted Steenrod closed parameter ideal $\langle X,Y\rangle$ in $H^*(BA_4)$ the following hold:
\begin{enumerate}
\item\label{item:notbothodd} Both $|X|$ and $|Y|$ cannot be odd.
\item\label{item:evenodd>3} If one of $|X|$, $|Y|$ is odd and $>3$, then there exists a Steenrod closed parameter ideal $\langle x, y\rangle$ such that $\langle X,Y\rangle =\langle vx^2,y^2\rangle$.
\item\label{item:evenodd3} If one of $|X|$, $|Y|$ is $3$, then $\langle X,Y\rangle =\langle v, y^2\rangle$ for some homogeneous element $y\in H^*(BA_4)$.
\end{enumerate}
\end{proposition}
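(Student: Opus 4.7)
My plan is to first reduce to the case $\Sq^1(X)=0=\Sq^1(Y)$ with $|X|<|Y|$, then extract square roots via \cref{lem:General}, and finally appeal to \cref{lem:EvenOddCase} for the descent in (2). To set up, I would use \cref{lem:lowergeneratorunique} to order $|X|<|Y|$; since $\langle X,Y\rangle$ is nontwisted, \cref{lem:Sq1Xneq0means} gives $\Sq^1(X)=0$, and \cref{lem:sq1xeq0sq1yeq0} lets me replace $Y$ by an element in the same ideal (of the same degree) with $\Sq^1(Y)=0$ without changing the ideal.

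For \eqref{item:notbothodd}, I would argue by contradiction. If both $|X|$ and $|Y|$ are odd, then \cref{lem:General}(2) forces $X=vx^2$ and $Y=vy^2$ with $x,y\in H^*(BA_4)$, so that $v$ is a common divisor. This contradicts the coprimality criterion of \cref{lem:Coprime}.

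For \eqref{item:evenodd>3}, by \eqref{item:notbothodd} exactly one of $|X|,|Y|$ is odd. \cref{lem:General} then expresses the odd parameter as $v\cdot(\textnormal{square})$ and the even parameter as a square, so after possibly interchanging the roles of the labels the ideal is $\langle vx^2,y^2\rangle$ for some $x,y\in H^*(BA_4)$. Since the odd degree is $>3$, the square root $x$ of the ``$v$-part'' has positive degree, which is exactly the hypothesis required by \cref{lem:EvenOddCase}; that lemma then delivers $\langle x,y\rangle$ as a Steenrod closed parameter ideal. The only real bookkeeping is checking this degree bound $|x|>0$.

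For \eqref{item:evenodd3}, I would first rule out $|Y|=3$. Since $H^1(BA_4)=0$, one has $|X|\ge 2$; if $|Y|=3$, then $|X|=2$, so $X$ is a scalar multiple of the unique degree-$2$ class $u$, giving $X=u$ and $\Sq^1(X)=v\neq 0$, contradicting $\Sq^1(X)=0$. Hence $|X|=3$, and \cref{lem:General}(2) forces $X=vx^2$ with $|x|=0$, i.e., $X=v$. By \eqref{item:notbothodd}, $|Y|$ is even, and \cref{lem:General}(1) combined with $\Sq^1(Y)=0$ yields $Y=y^2$ for some $y\in H^*(BA_4)$, which completes the claim. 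The one genuinely non-routine step in this whole proof is the elimination of $|Y|=3$; everything else is a direct assembly of the previously established lemmas.
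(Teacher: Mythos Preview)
Your proposal is correct and follows essentially the same route as the paper: order $|X|<|Y|$, use \cref{lem:Sq1Xneq0means} and \cref{lem:sq1xeq0sq1yeq0} to arrange $\Sq^1(X)=\Sq^1(Y)=0$, then apply \cref{lem:General} and \cref{lem:EvenOddCase}. The only difference is expository: the paper leaves the exclusion of $|Y|=3$ in part \eqref{item:evenodd3} implicit, whereas you spell out that $|Y|=3$ would force $|X|=2$, hence $X=u$ with $\Sq^1(u)=v\neq 0$, contradicting $\Sq^1(X)=0$.
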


\begin{proof} We may assume that $|X|<|Y|$. Since the ideal $\langle X, Y \rangle$ is nontwisted, by \cref{lem:Sq1Xneq0means} we have  $\Sq^1(X)=0$, and by \cref{lem:sq1xeq0sq1yeq0} we can pick the second generator $Y'$ such that $\Sq^1(Y')=0$. By \cref{lem:General},   
if both $|X|$ and $|Y|$ were odd, then $X$ and $Y'$ would be divisible by $v$ and thus they cannot form a system of parameters. 
If only one of the degrees of $X$ or $Y$ is odd, then $\langle X,Y\rangle =\langle vx^2,y^2\rangle$ by \cref{lem:General}. We conclude that \eqref{item:evenodd3} holds and \eqref{item:evenodd>3} follows from \cref{lem:EvenOddCase}.
\end{proof}

Usually it takes a lengthy computation to check if two elements generate a Steenrod closed parameter ideal. The next lemma provides a helpful criterion in a special case.

\begin{lemma}\label{lem:Condition} Suppose $\langle X,Y\rangle$ is a Steenrod closed ideal generated by homogeneous elements $X, Y$ in $H^*(BA_4)$ and let $k\geq 1$. Then $\langle v^k X,Y\rangle$ is Steenrod closed if and only if there exist coefficients $\lambda,\mu$ with 
\[\Sq(Y)=\lambda X+\mu Y\]
such that $v^k$ divides $\lambda$.
Furthermore if $X,Y$ are coprime, this is equivalent to asking that for any choice of coefficients $\lambda,\mu$ as above, we have $\lambda\in \langle v^k,Y\rangle$
\end{lemma}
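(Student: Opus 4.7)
The plan is to isolate which Steenrod-closedness condition is automatic and which must be imposed. Since $\Sq$ is a ring homomorphism on the total cohomology and $\Sq(v)=v(1+u+v)$, we have $\Sq(v^k)=v^k(1+u+v)^k$, hence
\[
\Sq(v^kX)=v^k(1+u+v)^k\Sq(X).
\]
Using that $\langle X,Y\rangle$ is Steenrod closed, write $\Sq(X)=\alpha X+\beta Y$. Substituting gives
\[
\Sq(v^kX)=(1+u+v)^k\alpha\cdot v^kX+(1+u+v)^k\beta v^k\cdot Y\in\langle v^kX,Y\rangle,
\]
so closedness of $\langle v^kX,Y\rangle$ reduces to the single condition $\Sq(Y)\in\langle v^kX,Y\rangle$.

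Next, I would pick any expression $\Sq(Y)=\lambda X+\mu Y$ (available since $\langle X,Y\rangle$ is Steenrod closed) and translate the condition into statements about $\lambda$. If we can arrange $v^k\mid\lambda$, say $\lambda=v^k\lambda'$, then $\Sq(Y)=\lambda'\cdot v^kX+\mu Y\in\langle v^kX,Y\rangle$ and we are done. Conversely, if $\Sq(Y)\in\langle v^kX,Y\rangle$, say $\Sq(Y)=\gamma v^kX+\delta Y$, then the pair $(\lambda,\mu)=(\gamma v^k,\delta)$ exhibits the desired divisibility. This proves the first equivalence.

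For the second equivalence, assuming $X,Y$ coprime, I would exploit the near-uniqueness of the representation $\Sq(Y)=\lambda X+\mu Y$. If $\lambda X+\mu Y=\lambda' X+\mu' Y$, then $(\lambda-\lambda')X=(\mu'-\mu)Y$, and since $H^*(BA_4)$ is a UFD (\cref{lem:Coprime}) with $X,Y$ coprime, $Y$ must divide $\lambda-\lambda'$. Hence all valid $\lambda$'s lie in the same coset modulo $\langle Y\rangle$. Thus if some valid $\lambda$ is divisible by $v^k$, then every valid $\lambda$ lies in $\langle v^k,Y\rangle$; and conversely if some (equivalently every) valid $\lambda$ lies in $\langle v^k,Y\rangle$, then writing $\lambda=\gamma v^k+\delta Y$ and absorbing the $\delta Y\cdot X$ term into the $Y$-coefficient produces a new representation $\Sq(Y)=\gamma v^k\cdot X+(\delta X+\mu)\cdot Y$ with $v^k$-divisible $X$-coefficient, reducing to the first part.

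There is no genuine obstacle here; the only delicate point is keeping the logic of "there exists" versus "for every" choice of coefficients straight, which is what makes the coprimality hypothesis necessary in the second statement. Everything else is bookkeeping with the Cartan-formula consequence $\Sq(v^k)=v^k(1+u+v)^k$ and the UFD property of $H^*(BA_4)$ already established in \cref{lem:Coprime}.
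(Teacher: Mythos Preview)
Your proof is correct and follows essentially the same approach as the paper's: compute $\Sq(v^kX)=v^k(1+u+v)^k\Sq(X)\in\langle v^kX,Y\rangle$ automatically, reduce to the condition $\Sq(Y)\in\langle v^kX,Y\rangle$, and for the coprime case use the UFD property to show any two valid $\lambda$'s differ by a multiple of $Y$. The paper's argument is slightly terser in the coprime part (it just records that any other choice satisfies $\lambda'=\lambda+pY$, $\mu'=\mu+pX$), whereas you spell out both directions explicitly, but the content is the same.
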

\begin{proof}
Suppose that there are such coefficients and let $\lambda=v ^k \lambda'$. We then have
\begin{align*}
\Sq(Y)&=\lambda' v^k X+\mu Y \in \langle v^k X,Y\rangle,\\
\Sq(v^k X)&=v^k(1+u+v)^k \Sq(X)\in v^k \langle X,Y\rangle\subset \langle v ^k X,Y\rangle
\end{align*}
since $\Sq(X)\in \langle X,Y\rangle$. Conversely, assume that $\langle v^k X,Y\rangle$ is Steenrod closed. We then have
\[\Sq(Y)=\lambda'v^k X+\mu Y\]
for some $\lambda',\mu$. This gives exactly the coefficients as above.

For the second part note that if $X,Y$ are coprime, then any other choice of coefficients $\lambda',\mu'$ is of the form
\[\lambda' = \lambda +pY\mbox{ and }\mu' =\mu+pX.\]
This implies the second part of the statement.
\end{proof}

As a consequence we conclude the following.
\begin{corollary}\label{AllPowers}
Let  $\langle X,Y \rangle$ be Steenrod closed and $k\geq 1$. If 
$\langle v^k X,Y \rangle$ is Steenrod closed then $\langle v^i X,Y\rangle$ is Steenrod closed 
for every $i \leq k$
\end{corollary}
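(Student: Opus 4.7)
The plan is to deduce this immediately from \cref{lem:Condition} applied twice. The lemma characterizes when adjoining a factor of $v^k$ to one generator preserves Steenrod closedness in terms of a divisibility condition on a coefficient $\lambda$ that expresses $\Sq(Y)$ in the ideal $\langle X, Y\rangle$. The crux is the trivial but decisive observation that divisibility by $v^k$ is a stronger condition than divisibility by $v^i$ whenever $i \leq k$, so the very same coefficients $\lambda, \mu$ that witness Steenrod closedness of $\langle v^k X, Y\rangle$ also witness it for every smaller power.

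Concretely, I would first invoke the forward direction of \cref{lem:Condition}: since $\langle X, Y\rangle$ is Steenrod closed and $\langle v^k X, Y\rangle$ is Steenrod closed, there exist homogeneous $\lambda, \mu \in H^*(BA_4)$ with
\[
\Sq(Y) = \lambda X + \mu Y \quad \text{and} \quad v^k \mid \lambda.
\]
Then for any $1 \leq i \leq k$, we have $v^i \mid v^k \mid \lambda$, so the same coefficients satisfy the hypothesis of \cref{lem:Condition} with $v^i$ in place of $v^k$. Applying the backward direction of \cref{lem:Condition} yields that $\langle v^i X, Y\rangle$ is Steenrod closed.

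There is no real obstacle; once \cref{lem:Condition} is in hand, the corollary is a one-line consequence of the monotonicity of divisibility by powers of $v$. The statement is written for $k \geq 1$ and $i \leq k$, and I would understand $i \geq 1$ implicitly (the case $i = 0$ is just the hypothesis that $\langle X, Y\rangle$ is Steenrod closed).
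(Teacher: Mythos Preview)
Your proof is correct and matches the paper's approach exactly: the paper states the corollary as an immediate consequence of \cref{lem:Condition} without further argument, and your derivation via the monotonicity of divisibility by powers of $v$ is precisely the intended one-line justification.
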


\section{Classification of fibered ideals}\label{sect:FiberedCase}

\begin{definition}\label{def:fibered}
A Steenrod closed parameter ideal $I\subset H^*(BA_4)$ is called \emph{fibered}, if it has a system of parameters $\{X,Y\}$ such that $\langle X\rangle$ is Steenrod closed.
\end{definition}
Note that we do not assume here that $X$ is the generator of smaller degree. By \cref{thm:oliver}, if $I=\langle X \rangle$ is a proper ideal in $H^* (BA_4)$ closed under Steenrod operations then there is a $k\geq 1$ such that $X= v^k$. This shows that fibered ideals are always of the form $\langle v^k,Y\rangle$. 
In this section we show that all fibered ideals are of the form $\langle v^k, u^l \rangle$ where $k$ and $l$ satisfy the following condition: if $l= 2^t c$ with $c$ odd, then $k\leq 2^t$. This condition comes from an observation due to Meyer and Smith.

\begin{theorem}[{\cite[Theorem~V.1.1]{meyersmith2003}}]\label{thm:MeyerSmith} The ideal $\langle v^k, u^l \rangle \subset H^*(BA_4)$ is Steenrod closed if and only if $l= 2^t c$ with $c$ odd and $k\leq 2^t$. 
\end{theorem}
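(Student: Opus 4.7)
The plan is to reduce the question to a calculation inside the polynomial subring $\bbF_2[u,v]=H^*(B\SO(3))\subset H^*(BA_4)$. First, $\Sq(v^k)=v^k(1+u+v)^k\in \langle v^k\rangle$, so Steenrod closure of the ideal reduces to the single membership $\Sq(u^l) \in \langle v^k,u^l\rangle$. Since $\Sq(u^l)=(u+v+u^2)^l$ and both generators lie in $\bbF_2[u,v]$, \cref{lem:contractionofidealinHSO3} lets us test this membership inside $\bbF_2[u,v]$. Writing $l=2^t c$ with $c$ odd and applying Frobenius, the starting identity is
\[
\Sq(u^l) \;=\; \bigl(u^{2^t}+v^{2^t}+u^{2^{t+1}}\bigr)^c \;=\; \sum_{j=0}^{c}\binom{c}{j}\,u^{2^t(c-j)}(1+u^{2^t})^{c-j}\,v^{2^t j}.
\]

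For the sufficiency direction, assume $k\le 2^t$. Then $v^{2^t}\in \langle v^k\rangle$, so every $j\ge 1$ summand lies in $\langle v^k\rangle$, while the $j=0$ summand $u^l(1+u^{2^t})^c$ lies in $\langle u^l\rangle$. Combining, $\Sq(u^l)\in \langle v^k,u^l\rangle$.

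For the necessity direction, assume $k > 2^t$, and isolate the $j=1$ summand, which equals $c\cdot u^{2^t(c-1)}(1+u^{2^t})^{c-1}v^{2^t}$. Since $c$ is odd its coefficient is $1$ mod $2$, and upon expanding $(1+u^{2^t})^{c-1}$ only the constant term keeps the $u$-exponent strictly below $l$; so modulo $\langle u^l\rangle$ this summand reduces to the single monomial $u^{2^t(c-1)}v^{2^t}$. Because $2^t < k$ and $2^t(c-1) < l$, this monomial represents a nonzero class in $\bbF_2[u,v]/\langle v^k,u^l\rangle$. The remaining summands cannot cancel it: the $j=0$ summand is divisible by $u^l$, and each $j\ge 2$ summand is divisible by $v^{2^{t+1}}$, so neither contributes to the coefficient of the distinguished monomial in the quotient. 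Hence $\Sq(u^l)\notin \langle v^k,u^l\rangle$.

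The main obstacle I expect is the bookkeeping in the necessity direction: verifying that no other term of the binomial expansion can match the monomial $u^{2^t(c-1)}v^{2^t}$ modulo the ideal, and pinpointing the exact place where $c$ odd is used (to ensure the coefficient $c$ survives mod $2$). Once nonvanishing in $\bbF_2[u,v]$ is established, \cref{lem:contractionofidealinHSO3} transfers the conclusion back to $H^*(BA_4)$.
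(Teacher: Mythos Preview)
Your argument is correct. The sufficiency direction coincides with the paper's \cref{lem:SqXSqYfibered}, which performs exactly the same binomial expansion of $\Sq(u^l)=(u^{2^t}+v^{2^t}+u^{2^{t+1}})^c$ and observes that all $j\ge 1$ terms lie in $\langle v^k\rangle$ while the $j=0$ term lies in $\langle u^l\rangle$.

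For the necessity direction the two approaches genuinely diverge. The paper does not give a direct proof of \cref{thm:MeyerSmith}; it simply remarks that the statement follows from the full classification of fibered ideals in \cref{thm:classification_fibered}, which is established by an inductive argument passing through \cref{lem:idealsvx2y2}, \cref{lem:vX2Y2normalform}, and \cref{lem:fibered_step1}. Your route is more elementary: since $\langle v^k,u^l\rangle$ is a monomial ideal in $\bbF_2[u,v]$, membership is detected monomial-by-monomial, and you exhibit the single surviving monomial $u^{2^t(c-1)}v^{2^t}$ (here the oddness of $c$ is used precisely to make $\binom{c}{1}\equiv 1$). The $j\ge 2$ summands cannot interfere because their $v$-exponent is $2^tj\neq 2^t$, and the $j=0$ summand has $v$-exponent $0$; so the coefficient of that monomial in $\Sq(u^l)$ is exactly $1$, independent of any reduction. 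This direct argument is essentially the original one of Meyer--Smith. What the paper's longer route buys is information beyond \cref{thm:MeyerSmith}: it simultaneously shows that \emph{every} fibered ideal is of the form $\langle v^k,u^l\rangle$, not just that these particular ideals are or are not Steenrod closed.

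One minor phrasing point: when you say the $j=1$ summand ``reduces to $u^{2^t(c-1)}v^{2^t}$ modulo $\langle u^l\rangle$'', you are really using that the ideal is monomial so that no cancellation between different $j$-summands can occur in the quotient. It would be slightly cleaner to say directly that the coefficient of $u^{2^t(c-1)}v^{2^t}$ in $\Sq(u^l)$ itself equals $1$, since the $v$-degrees $2^t j$ are pairwise distinct and only $j=1$ contributes.
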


In the proof of our results in this section we do not use this theorem. Moreover, this theorem
follows form the main theorem of this section (\cref{thm:classification_fibered}).

\begin{lemma}\label{lem:vX2Y2normalform}
Let $X,Y\in H^*(BA_4)$ be two coprime, homogeneous elements such that $|X|<|Y|$ and let $\lambda \in H^{|Y|-|X|}(BA_4)$ be any homogeneous element. Then 
\begin{align*}
\langle vX^2,(Y+\lambda X)^2\rangle &=\begin{cases}\langle vX^2,Y^2\rangle &\lambda\in \langle v\rangle \\
\langle vX^2,Y^2+u^{|Y|-|X|}X^2\rangle & \lambda \notin \langle v\rangle\end{cases},\\
\langle v(Y+\lambda X)^2,X^2\rangle&= \langle vY^2,X^2\rangle.
\end{align*}
\end{lemma}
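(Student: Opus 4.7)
The plan is to expand $(Y+\lambda X)^2 = Y^2 + \lambda^2 X^2$, which is legal in characteristic $2$, and then reduce each ideal equality to a congruence in $H^*(BA_4)/\langle v\rangle$. The second identity is essentially trivial: since $v(Y+\lambda X)^2 = vY^2 + v\lambda^2 X^2$ and $v\lambda^2 X^2 \in \langle X^2\rangle$, the ideals $\langle v(Y+\lambda X)^2, X^2\rangle$ and $\langle vY^2, X^2\rangle$ agree on the nose.

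For the first identity, I would rewrite the left-hand side as $\langle vX^2, Y^2 + \lambda^2 X^2\rangle$. When $\lambda = v\mu \in \langle v\rangle$, the term $\lambda^2 X^2 = v\mu^2 \cdot vX^2$ lies in $\langle vX^2\rangle$, so the ideal equals $\langle vX^2, Y^2\rangle$ as claimed. When $\lambda\notin \langle v\rangle$, the analogous reduction works provided I can establish the congruence $\lambda^2 \equiv u^{|Y|-|X|} \pmod v$: it then follows that $(\lambda^2 + u^{|Y|-|X|})X^2 \in \langle vX^2\rangle$, so the two candidates $Y^2 + \lambda^2 X^2$ and $Y^2 + u^{|Y|-|X|}X^2$ generate the same ideal together with $vX^2$.

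To prove the congruence, I would identify the quotient
\[
H^*(BA_4)/\langle v\rangle \;\cong\; \bbF_2[u,w]/(u^3+w^2),
\]
obtained from the presentation $\bbF_2[u,v,w]/(u^3+v^2+vw+w^2)$ by setting $v=0$. Using the relation $w^2=u^3$, one sees that every homogeneous piece of this ring is at most one-dimensional over $\bbF_2$: spanned by $u^k$ in degree $2k$, by $u^kw$ in degree $2k+3$, and zero in degree $1$. Since $|\lambda|=|Y|-|X|\ge 1$ and $\lambda\notin\langle v\rangle$, the image of $\lambda$ in the quotient is exactly one of these monomials, and squaring yields either $u^{2k}$ or $u^{2k}w^2=u^{2k+3}$; in either case the result is $u^{|Y|-|X|}$, as desired.

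The main obstacle is this structural fact about $H^*(BA_4)/\langle v\rangle$: once one observes the one-dimensionality of each graded piece and recognizes that the relation $w^2=u^3$ is exactly what is needed to turn squares of odd-degree elements into even powers of $u$, the entire lemma collapses into routine bookkeeping in an integral domain.
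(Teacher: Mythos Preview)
Your proof is correct and follows essentially the same approach as the paper's: both reduce to the observation that each graded piece of $H^*(BA_4)/\langle v\rangle\cong\bbF_2[u,w]/(u^3+w^2)$ is at most one-dimensional, so that $\lambda^2\equiv u^{|\lambda|}\pmod v$ whenever $\lambda\notin\langle v\rangle$. The only difference is cosmetic: the paper works with explicit representatives $\lambda=p+wp'$ with $p,p'\in\bbF_2[u]$ and computes the two cases separately, while you pass to the quotient ring from the outset and phrase the argument more uniformly.
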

\begin{proof}
First note that adding multiples of $v$ to $\lambda$ does not change the ideal:
\[\langle vX^2,(Y+(\lambda+\lambda'v)X)^2\rangle = \langle vX^2,Y^2+\lambda^2X^2+\lambda'^2v^2X^2\rangle=\langle vX^2,Y^2+\lambda^2X^2\rangle.\]
So the case $\lambda \in \langle v\rangle$ is obvious. 

Now assume that $\lambda \not \in \langle v \rangle$. Write $\lambda$ in the form $\lambda=p+wp'$ where $p,p'\in \bbF_2[u,v] \subset H^*(BA_4)$. We may leave out all monomials divisible by $v$ and thus assume that $p,p'\in \bbF_2[u]$. Hence $p$ and $p'$ have even degree and for degree reasons, only one of them can be nonzero. They cannot both be zero, since then $\lambda$ would be divisible by $v$. 
If $p\neq 0$, we have $p=u^{(|Y|-|X|)/2}$ since $\lambda$ is homogeneous. Thus 
\[\langle vX^2,Y^2+\lambda^2X^2\rangle = \langle vX^2,Y^2+u^{|Y|-|X|}X^2\rangle.\]
If $p'\neq 0$, we have $p'= u^{(|Y|-|X|-3)/2}$ and thus 
\begin{align*}
\langle vX^2,Y^2+\lambda^2X^2\rangle &= \langle vX^2,Y^2+w^2u^{|Y|-|X|-3}X^2\rangle\\
&=\langle vX^2,Y^2+(u^3+v^2+vw)u^{|Y|-|X|-3}X^2\rangle\\
&=\langle vX^2,Y^2+u^{|Y|-|X|}X^2\rangle.
\end{align*}
This proves the first part of the lemma. For the second part observe that
\[\langle v(Y+\lambda X)^2,X^2\rangle = \langle vY^2+v\lambda^2X^2,X^2\rangle = \langle vY^2,X^2\rangle.\qedhere\]
\end{proof}

As a consequence we obtain the following.

\begin{lemma}\label{lem:idealsvx2y2}
Let $I\subset H^*(BA_4)$ be a Steenrod closed parameter ideal with parameters $\{X,Y\}$ such that
$|X|<|Y|$. 
Then the set of all ideals of the form $\langle vX'^2,Y'^2\rangle$, where $\{X',Y'\}$ is any system of parameters for $I$,
consists of at most three elements, namely the elements $\langle vX^2,Y^2\rangle$, $\langle vX^2,Y^2+u^{|Y|-|X|}X^2\rangle$, $\langle X^2,vY^2\rangle$.
\end{lemma}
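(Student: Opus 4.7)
The plan is to combine \cref{lem:lowergeneratorunique} with \cref{lem:vX2Y2normalform} through a short case analysis on the possible ordered systems of parameters of $I$.

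First I would use \cref{lem:lowergeneratorunique} to pin down the possible shapes of a homogeneous system of parameters $\{X',Y'\}$ for $I$. That lemma tells us $|X|\neq |Y|$ and that the multiset of degrees of any homogeneous system of parameters for $I$ is $\{|X|,|Y|\}$. Moreover, since $|X|<|Y|$, the homogeneous elements of $I$ in degree $|X|$ form a one-dimensional $\bbF_2$-vector space spanned by $X$: any degree-$|X|$ element of $I$ is an $H^*(BA_4)$-combination of $X$ and $Y$, and the coefficient of $Y$ must vanish for degree reasons, leaving only $\bbF_2$-multiples of $X$. In particular, whichever of $X'$, $Y'$ has degree $|X|$ is forced to be $X$ itself.

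Now I would split according to which of $X'$, $Y'$ is the low-degree parameter. If $|X'|=|X|$, then $X'=X$, and the standard change-of-parameters argument from the proof of \cref{lem:lowergeneratorunique} produces a homogeneous $\mu\in H^{|Y|-|X|}(BA_4)$ with $Y'=Y+\mu X$. Applying the first identity of \cref{lem:vX2Y2normalform} with $\lambda=\mu$ gives
\[\langle vX'^2,Y'^2\rangle \in \bigl\{\langle vX^2,Y^2\rangle,\ \langle vX^2,Y^2+u^{|Y|-|X|}X^2\rangle\bigr\},\]
depending on whether $\mu\in\langle v\rangle$ or not. If instead $|X'|=|Y|$, then $Y'=X$ and $X'=Y+\lambda X$ for some homogeneous $\lambda$ of degree $|Y|-|X|$; the second identity of \cref{lem:vX2Y2normalform} then collapses the ideal to $\langle vY^2,X^2\rangle=\langle X^2,vY^2\rangle$. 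Taking the union of the three outcomes over both cases proves the claim.

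There is no real obstacle here, since \cref{lem:vX2Y2normalform} has been set up precisely to absorb all the freedom in the change of parameters and \cref{lem:lowergeneratorunique} guarantees the case split is exhaustive. The only small point to be careful about is that $\{X',Y'\}$, though written as a set, must be considered as an ordered pair because the two slots in $\langle vX'^2,Y'^2\rangle$ play asymmetric roles — which is precisely what produces the third ideal $\langle X^2,vY^2\rangle$ from the "swapped" case.
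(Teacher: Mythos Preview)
Your proposal is correct and matches the paper's intended argument: the paper states this lemma simply ``as a consequence'' of \cref{lem:vX2Y2normalform} without further proof, and what you have written is exactly the case analysis one performs to extract the corollary, combining \cref{lem:lowergeneratorunique} to normalize the shape of $\{X',Y'\}$ with the two identities of \cref{lem:vX2Y2normalform}. Your closing remark about treating $\{X',Y'\}$ as ordered is precisely the point that explains why the third ideal $\langle X^2,vY^2\rangle$ arises.
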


Now we prove a technical lemma.

\begin{lemma}\label{lem:vx2andy2plususx2}
Let $X,Y\in H^*(BA_4)$ be two homogeneous elements such that $\langle X,Y\rangle$ is a Steenrod closed parameter ideal and $s\coloneqq |Y|-|X|$ is nonnegative. Let $\alpha,\beta,\gamma,\delta\in H^*(BA_4)$ be elements satisfying 
\begin{align*}
\Sq(X) &=\alpha X+\beta Y,\\
\Sq(Y) &=\gamma X +\delta Y.
\end{align*}
Then $\langle vX^2,Y^2+u^{s}X^2\rangle$ is Steenrod closed if and only if
\[\gamma^2+(u+v+u^2)^s\alpha^2+u^s\delta^2+(u+v+u^2)^su^s\beta^2\in \langle v,Y^2+u^sX^2\rangle.\]
\end{lemma}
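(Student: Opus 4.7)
The plan is to reduce the claim to \cref{lem:Condition} by a direct substitution. Set $X'\coloneqq X^2$ and $Y'\coloneqq Y^2+u^sX^2$, so that $\langle vX^2,Y^2+u^sX^2\rangle=\langle vX',Y'\rangle$ and also $\langle X',Y'\rangle=\langle X^2,Y^2\rangle$, since $Y^2=Y'+u^sX'$ in characteristic $2$. Because $\langle X,Y\rangle$ is Steenrod closed, \cref{lem:EvenEvenCase} gives that $\langle X^2,Y^2\rangle=\langle X',Y'\rangle$ is a Steenrod closed parameter ideal. Moreover $X$ and $Y$ are coprime by \cref{lem:Coprime}, so $X^2$ and $Y^2$ are coprime, and hence $X'$ and $Y'=Y^2+u^sX'$ are coprime as well. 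We may therefore invoke \cref{lem:Condition} with $k=1$ applied to the pair $(X',Y')$: the ideal $\langle vX',Y'\rangle$ is Steenrod closed if and only if the coefficient $\lambda$ in any expression $\Sq(Y')=\lambda X'+\mu Y'$ lies in $\langle v,Y'\rangle$.

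Next I would compute $\Sq(Y')$ explicitly. Using the Cartan formula, the identity $\Sq(x^2)=\Sq(x)^2$, and $\Sq(u)=u+v+u^2$, one obtains
\[
\Sq(Y')=\Sq(Y)^2+(u+v+u^2)^s\Sq(X)^2=(\gamma^2X^2+\delta^2Y^2)+(u+v+u^2)^s(\alpha^2X^2+\beta^2Y^2).
\]
Now substitute $Y^2=Y'+u^sX'$ and $X^2=X'$ and collect coefficients to read off
\begin{align*}
\lambda &= \gamma^2+u^s\delta^2+(u+v+u^2)^s\alpha^2+(u+v+u^2)^su^s\beta^2,\\
\mu &= \delta^2+(u+v+u^2)^s\beta^2.
\end{align*}
This $\lambda$ matches exactly the element appearing in the statement.

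Combining these two steps, \cref{lem:Condition} tells us that $\langle vX^2,Y^2+u^sX^2\rangle$ is Steenrod closed if and only if this $\lambda$ lies in $\langle v,Y'\rangle=\langle v,Y^2+u^sX^2\rangle$, which is precisely the claimed condition. The only mild obstacles are bookkeeping during the substitution (making sure the $X'$ and $Y'$ contributions are separated correctly) and checking the coprimality hypothesis needed to use the second, cleaner form of \cref{lem:Condition}; both are straightforward once one unwinds the definitions.
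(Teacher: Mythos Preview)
Your proof is correct and follows essentially the same route as the paper: reduce to \cref{lem:Condition} applied to the pair $(X^2,\,Y^2+u^sX^2)$, use \cref{lem:EvenEvenCase} to know this pair generates a Steenrod closed parameter ideal, compute $\Sq(Y^2+u^sX^2)$ via the Cartan formula, and rewrite in terms of $X^2$ and $Y^2+u^sX^2$ to extract the coefficient $\lambda$. The only difference is that you make the coprimality check (needed for the second form of \cref{lem:Condition}) explicit, whereas the paper leaves it implicit in the fact that $\langle X^2,Y^2+u^sX^2\rangle$ is a parameter ideal.
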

\begin{proof}
We know from \cref{lem:EvenEvenCase} that $\langle X^2,Y^2\rangle=\langle X^2,Y^2+u^sX^2\rangle$ is a Steenrod closed parameter ideal. In fact, we have 
\begin{align*}
\Sq(Y^2+u^sX^2)=&\Sq(Y)^2+(u+v+u^2)^s \Sq(X)^2\\
=&(\gamma^2X^2+\delta^2Y^2)+(u+v+u^2)^s(\alpha^2X^2+\beta^2Y^2)\\
=&(\gamma^2+(u+v+u^2)^s\alpha^2)X^2+(\delta^2+(u+v+u^2)^s\beta^2)Y^2\\
=&(\gamma^2+(u+v+u^2)^s\alpha^2+u^s\delta^2+(u+v+u^2)^su^s\beta^2)X^2+\\
&(\delta^2+(u+v+u^2)^s\beta^2)(Y^2+u^sX^2).
\end{align*}
By \cref{lem:Condition}, the ideal $\langle vX^2,Y^2+u^sX^2\rangle$ is also Steenrod closed if and only if 
\[\gamma^2+(u+v+u^2)^s\alpha^2+u^s\delta^2+(u+v+u^2)^su^s\beta^2\in \langle v,Y^2+u^sX^2\rangle.\qedhere\] 
\end{proof}
To apply \cref{lem:vx2andy2plususx2}, we need to know the coefficients $\alpha,\beta,\gamma,\delta$. In the following lemma, we compute these coefficients for the fibered case.
 
\begin{lemma}\label{lem:SqXSqYfibered}Let $t,c\geq 0$ with $c$ odd and $0\leq k \leq 2^t$. If $(X,Y)\coloneqq(v^k,u^{2^{t}c})$, then
\[
    \Sq(X)=\alpha X \mbox{ and }
    \Sq(Y)=\gamma X+\delta Y 
\]
with 
\begin{align*}
\alpha &=(1+u+v)^k,\\
\gamma&=\left(\sum_{j=0}^{c-1}\binom{c}{j} (u^{2^{t}}+u^{2^{t+1}})^jv^{2^{t}(c-j)-k}\right),\\
\delta&=(u^{2^{t}}+1)^c.
\end{align*}
Furthermore $\langle v^k,u^{2^{t}c}\rangle$ is 
a Steenrod closed parameter ideal if $k>0$.
\end{lemma}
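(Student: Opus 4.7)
The plan is a direct computation, exploiting that the total Steenrod square $\Sq$ is a ring homomorphism, together with the generating formulas $\Sq(v)=v(1+u+v)$ and $\Sq(u)=u+v+u^2$ recorded in the preceding section. For $\Sq(X)$ I simply use multiplicativity: $\Sq(v^k)=\Sq(v)^k=v^k(1+u+v)^k=\alpha X$, which immediately gives the claimed formula for $\alpha$.

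For $\Sq(Y)=\Sq(u^{2^tc})$, I first observe that squaring is additive in characteristic $2$, so
\[\Sq(u^{2^t})=\Sq(u)^{2^t}=(u+v+u^2)^{2^t}=u^{2^t}+v^{2^t}+u^{2^{t+1}}.\]
Raising this to the $c$-th power and applying the binomial theorem, I get
\[\Sq(Y)=\sum_{j=0}^{c}\binom{c}{j}(u^{2^t}+u^{2^{t+1}})^j\, v^{2^t(c-j)}.\]
The top summand ($j=c$) factors as $(u^{2^t}+u^{2^{t+1}})^c=u^{2^tc}(1+u^{2^t})^c=\delta Y$. For $0\le j\le c-1$ the hypothesis $k\le 2^t$ together with $c-j\ge 1$ yields $2^t(c-j)\ge k$, so $v^{2^t(c-j)}=X\cdot v^{2^t(c-j)-k}$, and the remaining terms assemble into $\gamma X$ with $\gamma$ as stated. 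The only subtle bookkeeping step is this separation of the $j=c$ term from the lower ones; the hypothesis $k\le 2^t$ enters precisely to guarantee that the exponent $2^t(c-j)-k$ is nonnegative for $j\le c-1$.

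Finally, from $\Sq(X)=\alpha X\in\langle X,Y\rangle$ and $\Sq(Y)=\gamma X+\delta Y\in\langle X,Y\rangle$, multiplicativity of $\Sq$ implies that every $\Sq(pX+qY)=\Sq(p)\Sq(X)+\Sq(q)\Sq(Y)$ lies in $\langle X,Y\rangle$, so the ideal is Steenrod closed. When $k>0$ both generators have positive degree, so by \cref{lem:Coprime} it remains only to check that $v^k$ and $u^{2^tc}$ are coprime in the UFD $H^*(BA_4)$, which reduces to showing that $u$ and $v$ are coprime. This is clear from the factorizations $u=a^2+ab+b^2$ and $v=ab(a+b)$ in the UFD $\bbF_2[a,b]$, combined with the fact that $H^*(BA_4)\subset\bbF_2[a,b]$ inherits the unique factorization property.
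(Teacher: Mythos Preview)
Your proof is correct and follows essentially the same approach as the paper's: a direct computation of $\Sq(X)$ and $\Sq(Y)$ via multiplicativity, the binomial expansion with the $j=c$ term split off, and then the observation that $v^k$ and $u^{2^tc}$ are coprime of positive degree. The only difference is that you spell out the Steenrod-closedness and coprimality arguments in slightly more detail than the paper, which simply notes that $v^k,u^{2^tc}\in\bbF_2[u,v]$ are obviously coprime.
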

\begin{proof}
We have 
\begin{align*}
\Sq(X)&=(v+uv+v^2)^k=(1+u+v)^kX,\\
 \Sq(Y)&= (u+v+u^2)^{2^{t}c}=(u^{2^{t}}+u^{2^{t+1}}+v^{2^{t}})^{c}\\
 &=\sum_{j=0}^c\binom{c}{j} (u^{2^{t}}+u^{2^{t+1}})^jv^{2^{t}(c-j)}\\ 
 &=\left(\sum_{j=0}^{c-1}\binom{c}{j} (u^{2^{t}}+u^{2^{t+1}})^jv^{2^{t}(c-j)-k}\right)X+(u^{2^{t}}+1)^cY.
\end{align*}
It is obvious that $v^k,u^{2^{t}c}\in \bbF_2[u,v]$ are coprime elements of positive degree for $k>0$. Hence the ideal $\langle v^k,u^{2^{t}c}\rangle$ is a Steenrod closed parameter ideal.
\end{proof}

In the following lemma we consider the ideals of the form
$\langle vX^2,Y^2\rangle$, $\langle vX^2,Y^2+u^{|Y|-|X|}X^2\rangle$, or  $\langle vY^2,X^2+u^{|X|-|Y|}Y^2\rangle$
as in \cref{lem:idealsvx2y2}
when $\langle X, Y \rangle$ is a fibered ideal with $(X, Y)=(v^k, u^l)$. To classify the fibered ideals in \cref{thm:classification_fibered}, 
we will use parts \eqref{lem:fibered_step1i} and \eqref{lem:fibered_step1ii} of \cref{lem:fibered_step1}. The third statement is used in the next section, in the proof of \cref{thm:classification_steenrodclosedparameterideals}. 

\begin{lemma}\label{lem:fibered_step1}
Let $(X,Y)\coloneqq(v^k,u^{2^{t}c})$, 
where $t, c \ge 0$ with $c$ odd, and $1\le k\le 2^{t}$. Then the following hold:
\begin{enumerate}
    \item \label{lem:fibered_step1i}$\langle vX^2,Y^2\rangle=\langle v^{2k+1},u^{2^{t+1}c}\rangle$ is a Steenrod closed parameter ideal if and only if $k<2^t$.
    \item \label{lem:fibered_step1ii} 
    If $|X|<|Y|$, then
    $\langle vX^2,Y^2+u^{|Y|-|X|}X^2\rangle$ is not a Steenrod closed parameter ideal.
    \item \label{lem:fibered_step1iii} If $|X|>|Y|$, then the ideal $\langle vY^2,X^2+u^{|X|-|Y|}Y^2\rangle$ 
    is a Steenrod closed parameter ideal if and only if $k=2^t$ and $c=1$.
\end{enumerate}
\end{lemma}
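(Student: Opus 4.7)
The plan is to apply \cref{lem:Condition} for part (i) and \cref{lem:vx2andy2plususx2} for parts (ii) and (iii). From \cref{lem:SqXSqYfibered} we have the Steenrod coefficients $\alpha = (1+u+v)^k$, $\beta = 0$, $\delta = (u^{2^t}+1)^c$, and $\gamma = \sum_{j=0}^{c-1}\binom{c}{j}(u^{2^t}+u^{2^{t+1}})^j v^{2^t(c-j)-k}$. The uniform simplification I will exploit is to reduce the containment criteria modulo $v$: for any $N\ge 1$, an element of $\bbF_2[u]$ lies in the ideal $\langle u^N\rangle$ of $H^*(BA_4)/\langle v\rangle=\bbF_2[u,w]/\langle u^3+w^2\rangle$ iff $u^N$ divides it in $\bbF_2[u]$, so everything reduces to leading-degree comparisons in $\bbF_2[u]$.

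For (i), I would rewrite $\langle v X^2, Y^2\rangle = \langle v\cdot X^2, Y^2\rangle$ and invoke \cref{lem:Condition} with hypothesis ideal $\langle X^2, Y^2\rangle$, which is Steenrod closed by \cref{lem:EvenEvenCase}. The criterion demands that the coefficient $\lambda$ in $\Sq(Y^2)=\lambda X^2+\mu Y^2$ lies in $\langle v, Y^2\rangle$. Applying \cref{lem:SqXSqYfibered} with $k$ replaced by $2k$ and $t$ by $t+1$ gives $\lambda$ explicitly; its $j$th summand carries $v$-exponent $2^{t+1}(c-j)-2k$, which vanishes iff $j = c - k/2^t$. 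Such $j$ exists in $\{0,\dots,c-1\}$ exactly when $k=2^t$, giving $j=c-1$. For $k<2^t$ every summand is divisible by $v$, so $\lambda\in\langle v\rangle$ and the ideal is closed; for $k=2^t$ the surviving term $c(u^{2^{t+1}}+u^{2^{t+2}})^{c-1}$ has lowest monomial $u^{2^{t+1}(c-1)}$ of degree strictly below $2^{t+1}c$, hence $\lambda\not\in\langle v,Y^2\rangle$ and closedness fails.

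For (ii), I would apply \cref{lem:vx2andy2plususx2} with $s = 2^{t+1}c-3k>0$. Since $\beta=0$, the mod-$v$ criterion asks for divisibility of $(\gamma\bmod v)^2 + u^s(1+u)^{2^{t+1}c-k}\bigl(1+(1+u)^k\bigr)$ by $u^{2^{t+1}c}$. By Lucas's theorem the lowest monomial of $1+(1+u)^k$ is $u^{2^{v_2(k)}}$, and $k\le 2^t$ forces $2^{v_2(k)}\le k$, so the second summand has lowest degree $s+2^{v_2(k)}<s+3k=2^{t+1}c$. For $k<2^t$ the first summand vanishes mod $v$; for $k=2^t$ (which here forces $c\ge 3$) the two matching lowest monomials $u^{2^{t+1}(c-1)}$ cancel via $(1+u)^{2^t}=1+u^{2^t}$, leaving $u^{2^t(2c-1)}(1+u)^{2^{t+1}(c-1)}$, still of degree $<2^{t+1}c$. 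Divisibility thus fails in every subcase.

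For (iii), I would swap the roles of $X$ and $Y$ in \cref{lem:vx2andy2plususx2}, which is legitimate because the new $s=3k-2^{t+1}c$ is positive; the new coefficients become $\gamma=0$, $\delta=(1+u+v)^k$, $\alpha=(u^{2^t}+1)^c$, $\beta=\gamma_{\text{old}}$. The hypothesis $|X|>|Y|$, combined with $k\le 2^t$ and $c$ odd, immediately forces $c=1$ (else $c\ge 3$ gives $2^{t+1}c\ge 3\cdot 2^{t+1}>3\cdot 2^t\ge 3k$, contradicting $3k>2^{t+1}c$). The mod-$v$ criterion becomes divisibility by $u^{3k}$ of $u^s(1+u)^{2k}\bigl(1+(1+u)^k\bigr) + u^s(1+u)^s(\gamma_{\text{old}}\bmod v)^2$. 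For $k<2^t$ the second summand vanishes and the first has lowest degree $s+2^{v_2(k)}<3k$. For $k=2^t$ (with $c=1$) we have $\gamma_{\text{old}}=1$, and a direct Frobenius expansion reduces the full mod-$v$ expression to $u^{3\cdot 2^t}+u^{4\cdot 2^t}$, manifestly divisible by $u^{3k}$. The main obstacle is this final affirmative verification: the leading-degree inequality that powers every non-closedness conclusion fails exactly here, so one must exhibit the precise cancellation through direct computation.
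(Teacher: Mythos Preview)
Your approach is essentially the same as the paper's: both apply \cref{lem:Condition} for part~(i) and \cref{lem:vx2andy2plususx2} for parts~(ii)--(iii), then reduce to divisibility questions in $\bbF_2[u]$. Your packaging via the factor $1+(1+u)^k$ and the $2$-adic valuation $v_2(k)$ is a slightly cleaner way to locate the lowest monomial than the paper's device of multiplying by the unit $(1+u)^k$, but the substance is identical.

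One slip to flag in part~(iii): when you substitute the swapped coefficients into the criterion of \cref{lem:vx2andy2plususx2}, the $\beta^2$-term is $(u+v+u^2)^s u^s\,\gamma_{\text{old}}^2$, which reduces mod $v$ to $u^{2s}(1+u)^s(\gamma_{\text{old}}\bmod v)^2$, not $u^{s}(1+u)^s(\gamma_{\text{old}}\bmod v)^2$ as you wrote. This does not affect the $k<2^t$ subcase (where $\gamma_{\text{old}}\equiv 0$), and your stated outcome $u^{3\cdot 2^t}+u^{4\cdot 2^t}$ for the $k=2^t$ subcase is exactly what the \emph{correct} coefficient $u^{2s}(1+u)^s$ produces, so you evidently used the right formula in the computation. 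With the written $u^s$ the expression would instead collapse to $u^{2^t}+u^{2^{t+2}}$, which is not divisible by $u^{3\cdot 2^t}$ and would give the wrong conclusion; so do correct the exponent.
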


\begin{proof}
We prove \eqref{lem:fibered_step1i}. 
By \cref{lem:SqXSqYfibered}, if $k<2^t$, then
$\langle vX^2,Y^2\rangle=\langle v^{2k+1},u^{2^{t+1}c}\rangle$ is a Steenrod closed parameter ideal. If $k=2^t$ we want to use \cref{lem:Condition} to conclude that $\langle vX^2,Y^2\rangle$ is not Steenrod closed. Note that $\langle X^2,Y^2\rangle$ is a Steenrod closed parameter ideal by \cref{lem:EvenEvenCase} and thus we have to show that $\gamma ^2 \notin \langle v,u^{2^{t+1}c}\rangle$, where $\gamma$ is as in \cref{lem:SqXSqYfibered}. 

Modulo the ideal $\langle v, u^{2^{t+1}c} \rangle$, we have
\[\gamma \equiv \binom{c}{c-1}(u^{2^t}+u^{2^{t+1}})^{c-1}\equiv u^{2^t(c-1)}(1+u^{2t})^{c-1}.\]
Hence $\gamma ^2\equiv u^{2^{t+1} (c-1)} (1+u^{4t})^{c-1}$ mod $\langle v , u^{2^{t+1}} c \rangle$. The summand with the lowest exponent is $u^{2^{t+1}(c-1)}$ and this summand is nonzero modulo $\langle v, u^{2^{t+1}c}\rangle$. Hence $\gamma^2$ is not in $\langle v, u^{2^{t+1}c} \rangle$.

Now we want to prove \eqref{lem:fibered_step1ii}. Consider the ideal $I=\langle vX^2,Y^2+u^{s}X^2\rangle$ for $s=|Y|-|X|\geq 0$. 
Let  $\alpha,\gamma,\delta$ be as in \cref{lem:SqXSqYfibered}. 
By \cref{lem:vx2andy2plususx2}, the ideal $I$ is Steenrod closed if and only if
\[z\coloneqq\gamma^2+(u+v+u^2)^s\alpha^2+u^s\delta^2\]
is in $\langle v, Y^2+ u^s X^2\rangle$. Since $k\geq 1
$, we have $\langle v, Y^2+ u^s X^2\rangle=\langle v,u^{2^{t+1}c}\rangle$. Modulo this ideal, we have
\begin{align*}
&z \equiv (u^{2^{t+1}}+u^{2^{t+2}})^{c-1}v^{2^{t+1}-2k}+(u+u^2)^s(1+u)^{2k}+u^s(u^{2^{t+1}}+1)^c.
\end{align*}

First, consider the case $k<2^t$. Then $\gamma \equiv 0$ and we obtain 
\begin{align*}
    z\equiv (u+u^2)^s (1+u)^{2k} + u^s(u^{2^t+1}+1)^{c}&\equiv u^s((1+u)^{2k+s}+(u^{2^{t+1}}+1)^{c})\\
    &\equiv u^s((1+u)^{2^{t+1}c-k} +(1+u^{2^{t+1}})^c)
\end{align*}
since $s=|Y|-|X|=2^{t+1}c-3k$. Multiplying with the unit $(1+u)^{k}$ of $\bbF_2 [u]/\langle u^{2^{t+1}c}\rangle$ and writing $k = 2^l l'$ with $l'$ odd yields
\[u^s(1+u^{2^{t+1}})^c(1 +(1+u^{2^l})^{l'}).
\]
The summand with the lowest exponent is $u^s u^{2^l}=u^{s+2^l}$ and this summand is nonzero since the exponent $s+2^l\leq s+k = 2^{t+1}c-2k$ is smaller than $2^{t+1}c$.
Hence $z$ is not in the ideal $\langle v, u^{2^{t+1} c}\rangle.$

In the case $k=2^t$, we get $s=2^{t+1}c-3\cdot 2^t=(2c-3)2^t$. By assumption $s>0$ and hence  $c\geq 3$. Modulo the ideal $\langle v,u^{2^{t+1}c}\rangle$, we have
\begin{align*}
z  \equiv  & (u^{2^{t+1}}+u^{2^{t+2}})^{c-1}+(u+u^2)^s(1+u)^{2k}+u^s(u^{2^{t+1}}+1)^c \\
 \equiv & u^s \Bigl ( u^{2^t}  (1+u)^{2^{t+1} (c-1)}+(1+u)^{2^t (2c-1)}+(1+u)^{2^{t+1}c} \Bigr ).
\end{align*}
Multiplying with $(1+u)^{2^{t+1}}$ yields
\begin{align*}
(1+u) ^{2^{t+1}} z \equiv & u^s (1+u) ^{2^{t+1}c} \bigl ( u^{2^t}+(1+u)^{2^t}+(1+u)^{2^{t+1}} \bigr )\\
\equiv &  u^s (1+u)^{2^{t+1} c} u^{2^{t+1}} \equiv u^{(2c-1)2^t}  (1+u) ^{2^{t+1}c}. 
\end{align*}
The nonzero term of lowest degree in this polynomial $u^{(2c-1)2^t}$ does not lie in the ideal 
$\langle v, u^{2^{t+1}c}\rangle$. This gives that $(1+u) ^{2^{t+1}}z \not \equiv 0$ modulo $ \langle v, u^{2^{t+1} c} \rangle$.
From this we can conclude that 
$z \not \in \langle v, u^{2^{t+1}c} \rangle$, hence $I$ is not Steenrod closed.

Finally we want to prove \eqref{lem:fibered_step1iii}. Since $r\coloneqq |X|-|Y| =3 k -2^{t+1}c\ge 0$, we have $c=1$.
By \cref{lem:vx2andy2plususx2}, we have that $\langle vY^2,X^2+u^{|X|-|Y|}Y^2\rangle$ is Steenrod closed, if and only if 
\[z'\coloneqq 0^2+(u+v+u^2)^r\delta^2+u^r\alpha^2+(u+v+u^2)^ru^r\gamma^2\in \langle v,X^2+u^rY^2\rangle.\]
Modulo $\langle v,X^2+u^rY^2\rangle=\langle v,u^{|X|}\rangle=\langle v, u^{3k}\rangle$, we have
\begin{align*}
z'\equiv &(u+u^2)^r(u^{2^{t}}+1)^2+u^r(1+u)^{2k}+(u+u^2)^r u^r v^{2^{t+1}-2k} \\
=& u^r \Bigl ( (1+u) ^{r+2^{t+1}}+(1+u)^{2k} + u^r (1+u)^r v^{2^{t+1} -2k} \Bigr ). 
\end{align*}
If $k=2^t$, then we have $r=2^t$ and thus 
\begin{align*}
z'& \equiv u^{2^t} (1+u) ^{2^t} \Bigl ( (1+u) ^{ 2^{t+1} } + (1+u ) ^{2^t} +u^{2^t} \Bigr ) =(1+u)^{2^t}u^{3\cdot 2^t}\equiv 0.
\end{align*}
If $k<2^t$, then putting $r=3k-2^{t+1}$ gives
\[
   z' \equiv u^{3k-2^{t+1}} \Bigl ( (1+ u) ^{3k} + (1+u)^{2k} \Bigr )
     =  u^{3k-2^{t+1}} (1+u)^{2k} ((1+u)^k+1).
\]
Writing $k=2^l l'$ with $l'$ odd yields
\[ z'\equiv u^{3k-2^{t+1} } (1+u^{2^{l+1}})^{l'} \Bigl ( (1+u^{2^l})^{l'} +1 \Bigr ). \]
The summand with the lowest exponent is $u^{3k-2^{t+1}}u^{2^l}$ and this summand is nonzero since the exponent $3k-2^{t+1}+2^l$
is strictly less than $3k$.
\end{proof}

\begin{theorem}\label{thm:classification_fibered}
An ideal $I \subset H^* (BA_4)$ is fibered, if and only if $I=\langle v^k, u^l\rangle$ with $l=2^t c$ for $c$ odd and $1 \leq k\leq 2^t$. Furthermore, every fibered ideal can be written uniquely as above.
\end{theorem}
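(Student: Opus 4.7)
The backward direction is immediate from Lemma~\ref{lem:SqXSqYfibered}: the listed ideals are Steenrod closed parameter ideals, and $\Sq(v^k) = (1+u+v)^k v^k \in \langle v^k\rangle$ shows that $\langle v^k\rangle$ itself is Steenrod closed, so each listed ideal is fibered. For uniqueness, I would observe that $k$ (respectively $l$) is the least exponent with $v^k \in I$ (respectively $u^l \in I$): if $v^{k-1} = \alpha v^k + \beta u^l$, then coprimality of $u$ and $v$ forces $v^{k-1} \mid \beta$, and the resulting identity $1 = \alpha v + \beta' u^l$ fails since its right-hand side is concentrated in strictly positive degrees.

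For the forward direction, let $I = \langle v^k, Y\rangle$ be fibered (such a presentation exists by Theorem~\ref{thm:oliver}); the plan is strong induction on $|Y|$. The base case $|Y| = 2$ gives $Y = u$ and $I = \langle v, u\rangle$, corresponding to $k = l = 1$, $t = 0$, $c = 1$. If $I$ is twisted, combining Theorem~\ref{thm:classification_twisted_pairs} with Lemma~\ref{lem:xymumodv} (which implies $y_n$ is a pure power of $v$ only when $n = 1$) forces $I = \langle u, v\rangle$, already covered; so I may assume $I$ is nontwisted.

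Next I reduce to $Y = y^2$. From $\Sq^1(Y) \in I$ and $H^1(BA_4) = 0$, one has $\Sq^1(Y) = \lambda v^k$; applying $\Sq^1$ once more together with $\Sq^1(v^k) = 0$ forces $\Sq^1(\lambda) = 0$, and exactness of $\Sq^1$ (Remark~\ref{rem:Sq1exact}) allows replacing $Y$ by $Y + \lambda' v^k$ to achieve $\Sq^1(Y) = 0$; the edge case $\lambda = 1$ would correspond to $I$ being twisted, already excluded. Lemma~\ref{lem:General} then yields $Y = y^2$ if $|Y|$ is even and $Y = vy^2$ if $|Y|$ is odd; the latter contradicts $I$ being a parameter ideal since then $v$ divides both generators. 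Hence $|Y|$ is even and $Y = y^2$ for some $y \in H^*(BA_4)$.

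The inductive step splits by the parity of $k$. If $k$ is even, Proposition~\ref{pro:square_of_good_generators} gives $I = J^{[2]}$ with $J = \langle v^{k/2}, y\rangle$ a fibered ideal of smaller degree; induction plus squaring produces the desired presentation, with $k/2 \leq 2^{t'}$ giving $k \leq 2^{t'+1} = 2^t$. If $k = 1$, I work in $H^*(BA_4)/\langle v\rangle \cong \bbF_2[u, w]/\langle u^3+w^2\rangle$; via $u = t^2$, $w = t^3$ this is $\bbF_2[t^2, t^3]$, in which every nonzero homogeneous element is $u^a$ or $u^a w$, so $\bar{y}^2 = u^{|y|}$ and $I = \langle v, u^{|y|}\rangle$. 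If $k$ is odd with $k \geq 3$, writing $v^k = v \cdot (v^{(k-1)/2})^2$ and applying Lemma~\ref{lem:EvenOddCase} yields that $J = \langle v^{(k-1)/2}, y\rangle$ is a fibered ideal of smaller degree; induction gives $J = \langle v^{(k-1)/2}, u^{l'}\rangle$, whence $J^{[2]} = \langle v^{k-1}, y^2\rangle = \langle v^{k-1}, u^{2l'}\rangle$, so $y^2 = \zeta v^{k-1} + u^{2l'}$ for some $\zeta$ (the coefficient of $u^{2l'}$ being forced to $1$ by a degree-zero argument). A coprimality-plus-degree argument then forces $v \mid \zeta$ (otherwise one derives $v^k \mid 1$), yielding $I = \langle v^k, u^{2l'}\rangle$. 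Finally, the constraint $k \leq 2^{t'+1}$ is sharpened from the inductive $(k-1)/2 \leq 2^{t'}$ by invoking Lemma~\ref{lem:fibered_step1}(i), which excludes the boundary case $(k-1)/2 = 2^{t'}$ since the resulting ideal would fail to be Steenrod closed. The main obstacle I expect is this final case: extracting the element-level identification $y^2 \equiv u^{2l'} \pmod{v^k}$ from the ideal-level equality provided by induction, and correctly invoking Lemma~\ref{lem:fibered_step1}(i) to sharpen the inductive inequality.
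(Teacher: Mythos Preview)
Your overall strategy matches the paper's: induction with a split by the parity of $k$. The backward direction, uniqueness argument, exclusion of the twisted case, reduction to $Y=y^2$, the even-$k$ case, and the $k=1$ case are all correct (your treatment of $k=1$ via $H^*(BA_4)/\langle v\rangle\cong\bbF_2[t^2,t^3]$ differs from the paper's $p+wp'$ decomposition but works fine).

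The genuine gap is exactly where you flagged it, in the odd $k\ge 3$ case, but your proposed resolution is wrong. You assert that ``a coprimality-plus-degree argument forces $v\mid\zeta$''. This is false without using Steenrod closedness. Concretely, take $k'=(k-1)/2$ and write $y=u^{l'}+\lambda v^{k'}$ with $\lambda\notin\langle v\rangle$ (such $\lambda$ exist whenever $2l'-3k'>0$); then $\zeta=\lambda^2$ is not divisible by $v$, and one checks directly that $u^{2l'}\notin\langle v^k,y^2\rangle$, so $I\neq\langle v^k,u^{2l'}\rangle$. No contradiction of the form ``$v^k\mid 1$'' arises from coprimality alone.

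What actually rescues the argument is that $I$ is assumed Steenrod closed. The paper handles this via two dedicated lemmas. First, \cref{lem:vX2Y2normalform} shows that $\langle v(v^{k'})^2,(u^{l'}+\lambda v^{k'})^2\rangle$ depends only on whether $\lambda\in\langle v\rangle$: it equals $\langle v^k,u^{2l'}\rangle$ if so, and $\langle v^k,u^{2l'}+u^{2l'-3k'}v^{2k'}\rangle$ otherwise. Second, \cref{lem:fibered_step1}\eqref{lem:fibered_step1ii} shows the latter ideal is \emph{never} Steenrod closed. Hence Steenrod closedness of $I$ forces $\lambda\in\langle v\rangle$ (equivalently $v\mid\zeta$), and then $I=\langle v^k,u^{2l'}\rangle$. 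You invoke \cref{lem:fibered_step1}\eqref{lem:fibered_step1i} at the end to sharpen the inequality, which is correct, but you also need part \eqref{lem:fibered_step1ii} together with \cref{lem:vX2Y2normalform} to close the preceding step.
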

\begin{proof}
These ideals are Steenrod closed parameter ideals by \cref{lem:SqXSqYfibered}. Uniqueness follows from \cref{lem:lowergeneratorunique} 
for degree reasons.

We proceed by induction. Let $\langle v^k,Y\rangle$ be a fibered ideal. We assume that if $\langle v^{k'},Y'\rangle$ is fibered with $3k'+|Y'|<3k+|Y|$, then the ideal $\langle v^{k'},Y'\rangle$ has a system of parameters of the form $\{v^{k'},u^{l'}\}$ as above.

Next we want to show that we can assume that $\Sq^1(Y)=0$. If $3k<|Y|$, this follows from \cref{lem:sq1xeq0sq1yeq0} as $\Sq^1(v^k)=0$. If $|Y|<3k$ and $\Sq^1(Y)\neq 0$, then $v^k=\Sq^1(Y)$ by \cref{lem:Sq1Xneq0means} and thus the ideal is twisted. But the only twisted ideal, where one of the generators is of the form $v^i$ is $\langle u,v\rangle$. This is an ideal of the desired form. We thus can assume that we have $\Sq^1(Y)=0$ in general.

If $|Y|$ is odd, then $v$ divides $Y$ by \cref{lem:General}, but then $v^k$ and $Y$ are not coprime. So we can assume $|Y|$ is even. By \cref{lem:General}, this means that $Y=y^2$ for some $y$ in $H^*(BA_4)$.

If $k=2k'$, then we get by \cref{lem:EvenEvenCase} that
$\langle v^{k'},y\rangle$ is also a Steenrod closed parameter ideal and hence $\langle v^{k'},y\rangle =\langle v^{k'},u^{l'}\rangle$ by induction assumption. Thus 
\[\langle v^k,Y\rangle = \langle v^{k'},y\rangle^{[2]}=\langle v^{k'},u^{l'}\rangle^{[2]}=\langle v^{2k'},u^{2l'}\rangle.\]
It remains to look at the case $k=2k'+1$. If $k'=0$, we have $v^k=v$. Let us write $Y=p+wp'$ with $p,p'\in \bbF_2[u,v]\subset H^*(BA_4)$. We may leave out all monomials divisible by $v$ and thus we can assume that $p,p'\in \bbF_2[u]$. Since $Y$ has even degree we get $p'=0$. The polynomial $p$ cannot be zero, since otherwise $\langle v,Y\rangle =\langle v\rangle$ and this is not a parameter ideal. Hence $\langle v, Y\rangle =\langle v,u^{|Y|/2}\rangle $.

If $k'>0$, then by \cref{lem:EvenOddCase}, we have that
$\langle v^{k'},y\rangle$ is also a Steenrod closed parameter ideal and hence by induction assumption $\langle v^{k'}, y \rangle = \langle v^{k'},u^{l'}\rangle$
for some $k'\le 2^{t'}$ where $2^{t'}$ is the largest power of $2$ dividing $l'$.
Since $y\in \langle v^{k'},u^{l'}\rangle$, there exist homogeneous elements $\mu,\lambda$ such that $y=\mu u^{l'}+\lambda v^{k'}$. 
By \cref{lem:lowergeneratorunique}, the degrees of parameters are unique, hence we must have $|y|=|u^{l'}|$ and thus $\mu\in \bbF_2$. Since parameters are coprime, it follows that $\mu=1$ and $y=u^{l'}+\lambda v^{k'}$. If $\lambda=0$, then $\langle v^k,Y\rangle=\langle v^{2k'+1}, y^2\rangle= \langle v^{2k'+1},u^{2l'}\rangle$. If $\lambda \neq 0$, then by \cref{lem:vX2Y2normalform}, we get that $\langle v^k,Y\rangle=\langle v^{2k'+1},(u^{l'}+\lambda v^{k'})^2\rangle$ is one of the ideals
$\langle v^{k},u^{2l'}\rangle$ or $\langle v^{k},u^{2l'}+u^{2l'-3k'}v^{2k'}\rangle$. Since the latter is not Steenrod closed by \cref{lem:fibered_step1}\eqref{lem:fibered_step1ii}, we have $\langle v^k,Y\rangle=\langle v^{2k'+1},u^{2l'}\rangle$. By assumption, this ideal is a Steenrod closed parameter ideal and hence by \cref{lem:fibered_step1}\eqref{lem:fibered_step1i}, we get $k'<2^{t'}$. Thus $k=2k'+1\le 2^{t'+1}$ and $2^{t'+1}$ is the largest power of $2$ dividing $l=2l'$.
\end{proof}

\section{Classification of Steenrod closed parameter ideals}\label{sect:MixedCase}
 
In Sections \ref{sect:Twisted} and \ref{sect:FiberedCase} we classified the twisted and fibered Steenrod closed parameter ideals, respectively. In this section, we will classify all Steenrod closed parameter ideals.

In \cref{sect:Twisted} we have shown that every twisted ideal $I$ is equal to $\langle x_n, y_n\rangle$ for some $n\geq 1$, where the sequence of pairs 
$(x_n, y_n)$ is defined recursively by $(x_1, y_1)=(u,v)$ and $x_n=ux_{n-1} ^2 +y_{n-1}^2$, $y_n=vx_{n-1}^2$ for $n\geq 2$.
 Starting with such a pair $(x,y)$, we obtain new pairs using some transformations. One of the transformations we use is 
$$D: (X, Y)\mapsto (X^2, Y^2)$$ 
defined by squaring both coordinates.
We proved in \cref{lem:EvenEvenCase} that a pair $(X,Y)$ generates a Steenrod closed parameter ideal if and only if $D(X,Y)$ does so.
Another transformation is a generalization of the recursion we defined above. It only applies to the images of the recursive application of the transformation $D$ to $(x,y)$.
For each $m \geq 1$, we define
$$S \colon (x^{2^m}, y^{2^m}) \mapsto ( vx^{2^m}, u^{2^{m-1}} x^{2^m}+y^{2^m}).$$
Note that $S (x^2, y^2) = (vx^2, ux^2+y^2)$. Hence $S(x^2,y^2)$ generates the next twisted ideal.

Finally we define the transformation 
\[T \colon (X, Y) \mapsto (vX, Y).\] 
By \cref{lem:EvenOddCase}, if $(v X^2, Y^2)$ generates a Steenrod closed parameter ideal then so does $(X, Y)$. For small values of $m$, we have the following table.
\[
\xymatrix @-0.9pc{
(x,y) \ar[r]^-D & (x^2, y^2) \ar[d]^{S} \ar[r]^-D & (x^4, y^4) \ar[d]^{S} 
\ar[r]^-D & (x^8, y^8) \ar[d]^{S} \ar[r]^-D & \ldots\\
& (vx^2, ux^2+y^2) \ar[rd]^D &  ( vx^4, u^2 x^4+y^4)  \ar[rd]^D\ar[d]^T & (vx^8, u^4 x^8 +y^8) \ar[d]^T &  \\
& & (v^2 x^4, u^2x^4 +y^4) \ar[rdd]^D  & (v^2 x^8, u^4x^8+y^8) \ar[d]^T& \\
& & & (v^3 x^8, u^4 x^8 +y^8) \ar[d]^T & \\
& & & ( v^4 x^8, u^4 x^8 + y^8) & }
\]

Note that all the pairs in the above diagram have a specific form. We give a specific name for all the pairs obtained this way.

\begin{definition}\label{def:mixed}
A pair of homogeneous classes $(X,Y)$ in $H^* (BA_4)$ is called a \emph{mixed pair} if $$(X, Y) \coloneqq (v^ix_n^{2^m},u^{2^{m-1}}x_n^{2^{m}}+\Sq^1(x_n)^{2^{m}}),$$
for some $n\geq 1$, $m \geq 1$, and $0\le i \leq 2^{m-1}$,
where $x_n$ and $\mu_n$ are as in \cref{lem:SeqAreGood} satisfying that $\langle x_n,\Sq^1(x_n)\rangle$ is a Steenrod closed parameter ideal.
\end{definition}
\begin{remark}\label{eq:formulaformixedY} 
For $n \geq 1$ we have
$$
u^{2^{m-1}}x_n^{2^{m}}+\Sq^1(x_n)^{2^{m}} = (ux_n^2+y_n^2)^{2^{m-1}} = x_{n+1}^{2^{m-1}}.
$$
So, a mixed pair can also be expressed as a pair $( v^i x_n ^{2^m} , x_{n+1} ^{2^{m-1}})$ as it is done in \cref{thm:Intro-classification_steenrodclosedparameterideals}. Also note that when $i=2^{m-1}$, the mixed pair becomes the $(y_{n+1} ^{2^{m-1}}, x_{n+1} ^{2^{m-1}})$ pair. 
In particular, $(vx_n ^2, ux_n^2+y_n ^2)=(y_{n+1}, x_{n+1})$, so in this case the ideal generated by a mixed pair is a twisted ideal. We exclude the case $i=2^{m-1}$ from the subsequent statements about mixed pairs to avoid overlaps.
\end{remark}

Another overlap occurs when $n=1$ and $i=0$. In this case the ideal generated by a mixed pair is equal to $\langle v^{2^m}, u^{2^m} \rangle $ which is a fibered ideal. We exclude this case in our definition of mixed ideals.
 
\begin{definition}\label{def:mixedideal}
For $m \geq 1$, the ideal generated by a mixed pair
$(v^ix_n^{2^m},x_{n+1}^{2^{m-1}})$, with either
$n=1$ and $1\le i <2^{m-1}$, or $n \ge 2$ and $0\le i<2^{m-1}$, is called a \emph{mixed ideal}.
\end{definition}

The main aim of this section is to prove that every mixed ideal is a Steenrod closed parameter ideal
and all the Steenrod closed parameter ideals which are not fibered or twisted are mixed ideals. 
We start with proving one direction of our claim.

\begin{lemma}\label{lem:SqXSqYmixed}
Let 
$(X,Y)\coloneqq (v^ix_n^{2^m},u^{2^{m-1}}x_n^{2^{m}}+\Sq^1(x_n)^{2^{m}})$ be a mixed pair with $n\geq 1$, $m\ge 1$, and $0\le i <2^{m-1}$. Then we have
\[
\Sq(X)=\alpha X+\beta Y\mbox{ and }
\Sq(Y)=\gamma X+ \delta Y,
\]
with 
\begin{align*}
\alpha&=(1+u+v)^i(\mu_{n-1}^{2^m}+x_n^{2^m}+\mu_{n-1}^{2^m}u^{2^{m-1}}), \\
\beta &=(v+uv+v^2)^i\mu_{n-1}^{2^m}, \\ 
\gamma &=\mu_{n}^{2^{m-1}} v^{2^{m-1}-i}, \text{  and   }\ \delta =\mu_{n}^{2^{m-1}}+Y.
\end{align*}
Furthermore, $X$ and $Y$ are coprime and hence $\langle X,Y \rangle$ is a Steenrod closed parameter ideal.
\end{lemma}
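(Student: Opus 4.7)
The plan is to compute $\Sq(X)$ and $\Sq(Y)$ directly from the recursive formulas of \cref{lem:SeqAreGood}, using crucially that raising a square to a $2^s$-th power commutes with $\Sq$ via the identity $\Sq(z^{2^s}) = \Sq(z)^{2^s}$, and then to rewrite the resulting expressions in terms of $X$ and $Y$.

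For $\Sq(X)$, I would write $\Sq(X) = \Sq(v)^i \Sq(x_n)^{2^m} = (v+uv+v^2)^i \Sq(x_n)^{2^m}$ and substitute the formula $\Sq(x_n) = \mu_{n-1}(x_n + y_n) + x_n^2$ from \cref{lem:SeqAreGood}. Taking the $2^m$-th power gives
\[
\Sq(x_n)^{2^m} = \mu_{n-1}^{2^m}(x_n^{2^m} + y_n^{2^m}) + x_n^{2^{m+1}}.
\]
The key substitution is $y_n^{2^m} = \Sq^1(x_n)^{2^m} = Y + u^{2^{m-1}} x_n^{2^m}$, which lets me pull out a factor of $x_n^{2^m}$ and express the nontrivial terms as multiples of $X = v^i x_n^{2^m}$ and of $Y$; this should yield exactly the claimed $\alpha$ and $\beta$.

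For $\Sq(Y)$, the cleanest route is to use the reformulation $Y = x_{n+1}^{2^{m-1}}$ from \cref{eq:formulaformixedY}, so that $\Sq(Y) = \Sq(x_{n+1})^{2^{m-1}}$. Applying the formula for $\Sq(x_{n+1})$ from \cref{lem:SeqAreGood} and raising to the $2^{m-1}$-th power gives
\[
\Sq(Y) = \mu_n^{2^{m-1}}(Y + y_{n+1}^{2^{m-1}}) + Y^2.
\]
Then substituting $y_{n+1} = v x_n^2$ yields $y_{n+1}^{2^{m-1}} = v^{2^{m-1}} x_n^{2^m} = v^{2^{m-1}-i} X$, which requires $i \leq 2^{m-1}$ (satisfied by assumption), and collecting the $X$-coefficient and $Y$-coefficient reproduces the stated $\gamma$ and $\delta$.

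For the coprimality claim, since $H^*(BA_4)$ is a UFD I only need to check that $v$ and $x_n$ have no common prime factor with $x_{n+1}$. The fact that $x_n, y_n$ are coprime (they generate a parameter ideal by \cref{lem:SeqAreGood}) combined with $x_{n+1} = u x_n^2 + y_n^2$ shows $\gcd(x_n, x_{n+1}) = \gcd(x_n, y_n^2) = 1$; and by \cref{lem:xymumodv} we have $x_{n+1} \equiv u^{2^{n+1}-1} \not\equiv 0$ modulo $v$, so $v \nmid x_{n+1}$. Hence $X$ and $Y$ are coprime, and \cref{lem:Coprime} promotes this to the parameter-ideal statement; Steenrod closedness then follows from the Cartan formula together with $\Sq(X), \Sq(Y) \in \langle X, Y\rangle$. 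The main bookkeeping obstacle will be the substitution $y_n^{2^m} = Y + u^{2^{m-1}} x_n^{2^m}$ and the corresponding rewriting of $y_{n+1}^{2^{m-1}}$ as a $v$-power multiple of $X$, since the exponents must be tracked carefully in order to land on the compact closed forms of $\alpha, \beta, \gamma, \delta$.
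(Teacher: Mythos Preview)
Your proposal is correct and follows essentially the same approach as the paper: the paper computes $\Sq(X)$ via $(v+uv+v^2)^i\Sq(x_n)^{2^m}$ with the same substitution $y_n^{2^m}=u^{2^{m-1}}x_n^{2^m}+Y$, computes $\Sq(Y)$ by rewriting $Y=x_{n+1}^{2^{m-1}}$ and using $y_{n+1}^{2^{m-1}}=v^{2^{m-1}-i}X$, and handles coprimality by the same reduction to the coprimality of $x_n$ and $\Sq^1(x_n)$ together with \cref{lem:xymumodv}. Your phrasing of the coprimality step via $\gcd(x_n,x_{n+1})=1$ and $x_{n+1}\not\equiv 0\pmod v$ is a minor cosmetic variant of the paper's argument.
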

\begin{proof}
By \cref{lem:SeqAreGood}, we have
\begin{align*} 
\Sq(x_n)& =(\mu _{n-1} +x_n) x_n+\mu_{n-1} y_n \\
\Sq(y_n)&=v\mu_{n-1} x_n +((u+1)\mu_{n-1} +x_n +y_n )y_n.
\end{align*}
This gives
\begin{align*}
\Sq(X) &= \Sq(v)^i\Sq(x_n)^{2^m}\\
&=(v+uv+v^2)^i\bigl ((\mu_{n-1}^{2^m}+x_n^{2^m})x_n^{2^m}+\mu_{n-1}^{2^m} y_n^{2^m} \bigl )\\
&=(v+uv+v^2)^i\Bigl ((\mu_{n-1} ^{2^m}+x_n^{2^m})x_n^{2^m}+\mu_{n-1}^{2^m}(u^{2^{m-1}}x_n^{2^m}+Y)\Bigr )\\
&=(1+u+v)^i(\mu_{n-1} ^{2^m}+x_n ^{2^m}+\mu_{n-1} ^{2^m} u^{2^{m-1}})X+(v+uv+v^2)^i\mu_{n-1}^{2^m}Y.
\end{align*}
Hence the formula for $\Sq(X)$ given in the lemma holds. To verify the formula for $\Sq(Y)$, observe that
\begin{equation}\label{eq:formulaformixed}
v^{2^{m-1}-i}X = v^{2^{m-1}}x_n^{2^m}=y_{n+1}^{2^{m-1}} \mbox{ and }Y=x_{n+1}^{2^{m-1}}\end{equation} by \cref{eq:formulaformixedY}.
So we have:
\begin{align*}
\Sq(Y)&=\Sq(x_{n+1})^{2^{m-1}}=(\mu_{n}^{2^{m-1}}+x_{n+1}^{2^{m-1}})x_{n+1}^{2^{m-1}}+\mu_{n}^{2^{m-1}} y_{n+1}^{2^{m-1}}\\
&=(\mu_{n}^{2^{m-1}}+Y)Y+\mu_{n}^{2^{m-1}} v^{2^{m-1}-i}X.
\end{align*}

Since $n\ge 1$, the generators $X,Y$ have degrees bigger than $0$. It remains to show that $X$ and $Y$ are coprime. Any common divisor would either be a divisor of $x_n$, in which case it would also divide $\Sq^1(x_n)$, which contradicts the assumption that $x_n$ and $\Sq^1(x_n)$ are coprime; or $v$ would be a common divisor. Since $v$ divides $\Sq^1(x_n)$ by \cref{lem:xymumodv}, it would also divide $x_n$ which contradicts again that $x_n$ and $\Sq^1(x_n)$ are coprime.
\end{proof}

Thus we have shown that any mixed ideal as defined in \cref{def:mixedideal} is a Steenrod closed parameter ideal.
Using the formulas in \cref{lem:SqXSqYmixed}, we immediately obtain the following:

\begin{lemma}\label{lem:SqXSqYmixedCoeffmodv}
Let 
$(X,Y)\coloneqq (v^ix_n^{2^m},u^{2^{m-1}}x_n^{2^{m}}+\Sq^1(x_n)^{2^{m}})$ be a mixed pair with $n\geq 1$, $m\ge 1$, and $0\le i <2^{m-1}$.
Modulo $v$ we have $X \equiv 0$ for $i>0$ and $X\equiv u^{2^{n+m}-2^m}$ for $i=0$, and $Y \equiv u^{2^{n+m}-2^{m-1}}$. Moreover, the residue classes of the coefficients from \cref{lem:SqXSqYmixed} 
are given by  
\begin{align*}
\alpha \equiv & (1+u)^i(u^{2^{n+m}-2^{m-1}}+1)/(u^{2^{m-1}}+1),\\
\beta \equiv&\begin{cases} (u^{2^{n+m}-2^{m}}+1)/(u^{2^{m}}+1)&i=0\\ 0&i>0
\end{cases},\\
\gamma \equiv &0, \text{  and  }\ \delta \equiv (u^{2^{n+m}}+1)/(u^{2^{m-1}}+1).
\end{align*}
\end{lemma}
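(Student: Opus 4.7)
The plan is to derive every congruence by directly reducing the closed-form expressions of \cref{lem:SqXSqYmixed} modulo $v$ and plugging in \cref{lem:xymumodv}. From the latter, $x_n \equiv u^{2^n-1}$, $y_n = \Sq^1(x_n) \equiv 0$, and $\mu_{n-1} \equiv (1 + u^{2^n-1})/(1+u)$. Since raising to a power of two is $\bbF_2$-linear and $(1+u)^{2^m} = 1 + u^{2^m}$ in characteristic $2$, these give
\[\mu_{n-1}^{2^m} \equiv (1 + u^{2^{n+m}-2^m})/(1+u^{2^m}), \qquad \mu_n^{2^{m-1}} \equiv (1 + u^{2^{n+m}-2^{m-1}})/(1+u^{2^{m-1}}).\]
With these congruences in hand, the values of $X$ and $Y$ mod $v$ are immediate: $X = v^i x_n^{2^m}$ is $0$ when $i>0$ and $u^{2^{n+m}-2^m}$ when $i=0$, and $Y = u^{2^{m-1}} x_n^{2^m} + y_n^{2^m}$ collapses to $u^{2^{m-1}} \cdot u^{2^{n+m}-2^m} = u^{2^{n+m}-2^{m-1}}$.

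The reductions of $\beta$ and $\gamma$ are quick. Since $\beta = v^i(1+u+v)^i\mu_{n-1}^{2^m}$, it vanishes mod $v$ for $i>0$, and for $i=0$ it equals $\mu_{n-1}^{2^m}$. For $\gamma = \mu_n^{2^{m-1}} v^{2^{m-1}-i}$, the assumption $i < 2^{m-1}$ makes the exponent of $v$ strictly positive, so $\gamma \equiv 0$.

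The main bookkeeping step is $\alpha$. Here I expand $(1+u+v)^i \equiv (1+u)^i$ and then need to simplify
\[\mu_{n-1}^{2^m} + x_n^{2^m} + u^{2^{m-1}} \mu_{n-1}^{2^m} \pmod{v}.\]
I will rewrite everything over the common denominator $(1+u^{2^m}) = (1+u^{2^{m-1}})^2$. The first two terms combine to $(1 + u^{2^{n+m}})/(1+u^{2^m})$ because the $u^{2^{n+m}-2^m}$-contributions cancel. Adding the third term multiplies the numerator by $1 + u^{2^{m-1}}$ in a controlled way: explicitly, $1 + u^{2^{n+m}} + u^{2^{m-1}}(1 + u^{2^{n+m}-2^m})$ factors as $(1 + u^{2^{m-1}})(1 + u^{2^{n+m}-2^{m-1}})$, and cancelling one factor of $(1+u^{2^{m-1}})$ with the denominator gives the stated expression for $\alpha$.

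Finally $\delta = \mu_n^{2^{m-1}} + Y$ is handled the same way: with common denominator $1 + u^{2^{m-1}}$ the numerator becomes $1 + u^{2^{n+m}-2^{m-1}} + u^{2^{n+m}-2^{m-1}}(1 + u^{2^{m-1}}) = 1 + u^{2^{n+m}}$, giving $\delta \equiv (1 + u^{2^{n+m}})/(1 + u^{2^{m-1}})$. The computation is elementary throughout; the only mild obstacle is the algebraic rearrangement for $\alpha$, where one must spot that the three terms assemble into a product of two geometric factors before simplifying.
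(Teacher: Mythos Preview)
Your proof is correct and follows exactly the same approach as the paper: plug the congruences from \cref{lem:xymumodv} into the closed-form coefficients of \cref{lem:SqXSqYmixed} and reduce modulo $v$. The paper's own proof merely asserts that this substitution yields the stated result, whereas you spell out the algebraic simplifications (in particular the factorization step for $\alpha$ and the common-denominator manipulation for $\delta$); your additional detail is sound and does not deviate from the intended argument.
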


\begin{proof}
 By \cref{lem:xymumodv}, we have  
\[x_n \equiv u^{2^{n}-1}, \ y_n\equiv 0,\mbox{ and } \mu_n\equiv (u^{2^{n+1}-1}+1)/(u+1)\]
and thus $Y \equiv u^{2^{m-1}} (u^{2^{n}-1})^{2^{m}}\equiv 
 u^{2^{n+m}-2^{m-1}}$ and $X \equiv 0$ for $i>0$ and $X\equiv u^{2^{n+m}-2^m}$ for $i=0$. 
Now the result follows from 
plugging these into the formulas in \cref{lem:SqXSqYmixed} and reducing everything mod $v$.
\end{proof}

We have the following lemma for mixed pairs. 

\begin{lemma}\label{lem:nonfibered_step1}
Let 
$(X,Y)\coloneqq (v^ix_n^{2^m},u^{2^{m-1}}x_n^{2^{m}}+\Sq^1(x_n)^{2^{m}})$ be a mixed pair with $n\geq 1$, $m\ge 1$, and $0\le i <2^{m-1}$.
Then the following hold:
\begin{enumerate}
    \item $\langle vX^2,Y^2\rangle$ is a Steenrod closed parameter ideal.
    \item If $|X|<|Y|$, then $\langle vX^2,Y^2+u^{|Y|-|X|}X^2\rangle$ is not a Steenrod closed parameter ideal.
\end{enumerate}
\end{lemma}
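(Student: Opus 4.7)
The plan is to use the general criteria \cref{lem:Condition} and \cref{lem:vx2andy2plususx2} together with the explicit coefficients from \cref{lem:SqXSqYmixed} and their reductions modulo $v$ from \cref{lem:SqXSqYmixedCoeffmodv}. For part (1), I would first note that $\langle X^2,Y^2\rangle$ is a Steenrod closed parameter ideal by \cref{lem:SqXSqYmixed} combined with \cref{lem:EvenEvenCase}, and that squaring the formula of \cref{lem:SqXSqYmixed} yields $\Sq(Y^2)=\gamma^2 X^2+\delta^2 Y^2$. Applying \cref{lem:Condition} with $k=1$ then reduces Steenrod closedness of $\langle vX^2,Y^2\rangle$ to the condition $\gamma^2\in\langle v,Y^2\rangle$, which is immediate since $\gamma\equiv 0\pmod v$ by \cref{lem:SqXSqYmixedCoeffmodv}. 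Coprimality of $vX^2$ and $Y^2$ follows from \cref{lem:SqXSqYmixed} together with $v\nmid Y$, which is visible from the reduction $Y\equiv u^{2^{n+m}-2^{m-1}}\not\equiv 0$ modulo $v$ in \cref{lem:SqXSqYmixedCoeffmodv}.

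For part (2), set $s=|Y|-|X|=2^m-3i>0$, and split into subcases. When $i=0$ (so $s=2^m$), \cref{lem:SqXSqYmixedCoeffmodv} gives
\[Y^2+u^s X^2\equiv u^{2^{n+m+1}-2^m}+u^{2^m}\cdot u^{2^{n+m+1}-2^{m+1}}\equiv 0 \pmod v,\]
so $v$ divides both $vX^2$ and $Y^2+u^s X^2$. By \cref{lem:Coprime} the ideal is then not a parameter ideal, settling this subcase.

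When $i>0$, the plan is to apply \cref{lem:vx2andy2plususx2} and show that the obstruction
\[z=\gamma^2+(u+v+u^2)^s\alpha^2+u^s\delta^2+(u+v+u^2)^s u^s\beta^2\]
does not lie in $\langle v,Y^2+u^s X^2\rangle$. By \cref{lem:SqXSqYmixedCoeffmodv}, modulo $v$ we have $X\equiv 0$, $Y^2\equiv u^N$, $\beta\equiv 0$, and $\gamma\equiv 0$, where $N=2^{n+m+1}-2^m$, so the ideal reduces to $\langle v,u^N\rangle$. Substituting the expressions for $\alpha$ and $\delta$, multiplying by the unit $u^{2^m}+1$ of $\bbF_2[u]/\langle u^N\rangle$, and using $u^N+1\equiv 1$ and $u^{2^{n+m+1}}=u^{N+2^m}\equiv 0$ modulo $u^N$ together with $s+2i=2^m-i$, one obtains
\[(u^{2^m}+1)\,z\equiv u^s\bigl[(1+u)^{2^m-i}+1\bigr]\pmod{v,\,u^N}.\]
Writing $2^m-i=2^a b$ with $b$ odd, the lowest-degree term of $(1+u)^{2^m-i}+1=(1+u^{2^a})^b+1$ is $u^{2^a}$, so the right-hand side has lowest-degree term $u^{s+2^a}$. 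The main obstacle is the degree bound $s+2^a<N$: since $2^a\le 2^m-i$, we have $s+2^a\le 2^{m+1}-4i<2^{m+1}<3\cdot 2^m\le N$ for $i\ge 1$ and $n\ge 1$. Thus $z$ is nonzero modulo $\langle v,u^N\rangle$, proving that $\langle vX^2,Y^2+u^s X^2\rangle$ is not Steenrod closed.
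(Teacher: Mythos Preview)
Your proof is correct and, for part~(2), follows essentially the same route as the paper: apply \cref{lem:vx2andy2plususx2}, reduce modulo $v$ using \cref{lem:SqXSqYmixedCoeffmodv}, multiply by the unit $1+u^{2^m}$, and locate the lowest nonvanishing monomial. The only cosmetic difference is in the final degree estimate: the paper observes that the $2$-adic valuation of $2^m-i$ equals that of $i$ (call it $2^l$), giving $s+2^l\le s+i<s+|X|=|Y|$, whereas you bound $2^a\le 2^m-i$ and use $s+(2^m-i)=2^{m+1}-4i<2^{m+1}<|Y|$. Both bounds are valid.

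For part~(1) your approach genuinely differs from the paper's. The paper simply notes that
\[
\langle vX^2,Y^2\rangle=\bigl\langle v^{2i+1}x_n^{2^{m+1}},\,u^{2^{m}}x_n^{2^{m+1}}+\Sq^1(x_n)^{2^{m+1}}\bigr\rangle
\]
is again generated by a mixed pair (with parameters $m'=m+1$, $i'=2i+1<2^m=2^{m'-1}$) and invokes \cref{lem:SqXSqYmixed} directly. You instead verify Steenrod closedness via the criterion of \cref{lem:Condition}, using $\Sq(Y^2)=\gamma^2X^2+\delta^2Y^2$ and $v\mid\gamma$, and handle coprimality separately from $v\nmid Y$. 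Both arguments are short; the paper's has the advantage of making the recursive structure of mixed pairs explicit (which is what drives the classification in \cref{thm:classification_steenrodclosedparameterideals}), while yours is more self-contained and shows directly how the general machinery of \cref{lem:Condition} applies.
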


\begin{proof}
 Note that the ideal $$\langle vX^2,Y^2 \rangle =\langle v^{2i+1}x_n^{2^{m+1}},u^{2^{m}}x_n^{2^{m+1}}+\Sq^1(x_n)^{2^{m+1}} \rangle$$ is also of the form in \cref{lem:SqXSqYmixed}. 
Since $0 \leq i< 2^{m-1}$ and $m\geq 1$, we have $2i+1 < 2^m$. Thus by \cref{lem:SqXSqYmixed}, we can conclude that   
$\langle vX^2,Y^2\rangle$ is a Steenrod closed parameter ideal. 

 Now consider $I=\langle vX^2,Y^2+u^sX^2\rangle$ with $s=|Y|-|X|$. Write $\Sq(X)=\alpha X + \beta Y$, $\Sq(Y)=\gamma X + \delta Y$. By
 \cref{lem:vx2andy2plususx2}, the ideal $I$ is Steenrod closed if and only if the coefficient
\[z\coloneqq\gamma^2+(u+v+u^2)^s\alpha^2+u^s\delta^2+(u+v+u^2)^su^s\beta^2\]
lies in the ideal $\langle v,Y^2+u^sX^2\rangle$. By \cref{lem:SqXSqYmixedCoeffmodv}, this ideal simplifies to
\[\langle v, Y^2+u^sX^2\rangle = \begin{cases}
 \langle v\rangle &i=0\\
\langle v,u^{|Y|}\rangle&i>0.\end{cases}
\]
 It is helpful to record the degrees of $X,Y$ using that $|x_n|=2^{n+1}-2$.
\[
|X|= 3i+2^{m+n+1}-2^{m+1},\quad
|Y|= 2^{n+m+1}-2^m,\quad
s=|Y|-|X|=2^m-3i.
\]
First, we consider the case $i>0$. We have mod $\langle v,u^{|Y|}\rangle$:  
$$z\equiv (u+u^2)^s(1+u)^{2i}(u^{2^{n+m+1}-2^{m}}+1)/(u^{2^{m}}+1)+u^s(u^{2^{n+m+1}}+1)/(u^{2^{m}}+1).$$
It suffices show that the coefficient multiplied with the unit $1+u^{2^m}$ in $\bbF_2[u]/\langle u^{|Y|}\rangle$ is nonzero. We have
$$(1+u^{2^m})z  \equiv u^s ( (1+u)^{2i+s}(u^{2^{n+m+1}-2^{m}}+1)+(u^{2^{n+m+1}}+1) )  \equiv u^s((1+u)^{2^{m}-i}+1)$$
since $u^{2^{n+m+1}}\equiv u^{2^{n+m+1}-2^m}\equiv 0$ mod $u^{|Y|}$.
Now write $i=l'2^l$ with $l'$ odd and thus $l<m-1$. Then $$(1+u)^{2^{m}-i}=(1+u^{2^l})^{2^{m-l}-l'}= 1 + u^{2^l} + \cdots$$ since $2^{m-l}-l'$ is odd and it suffices to show that $u^{s+2^l}$ is nonzero in $\bbF_2[u]/\langle u^{|Y|}\rangle$. Note that $|X|-i = 2i+2^{m+n+1}-2^{m+1}>0$. We thus have $s+2^l\leq s+i<s+|X|=|Y|$ and hence the coefficient is indeed nonzero in $\bbF_2[u]/\langle u^{|Y|}\rangle$.
Hence $z$ does not lie in the ideal $\langle v,Y^2+u^sX^2\rangle$

Secondly, consider the case $i=0$. Then by \cref{lem:SqXSqYmixedCoeffmodv} we have that $Y\equiv u^{2^{m+n}-2^{m-1}}$ and $X\equiv u^{2^{m+n}-2^{m}}$. Thus 
$Y^2+u^sX^2 \equiv 0$ mod $v$ and hence the ideal $\langle vX^2,Y^2+u^sX^2\rangle$ cannot be a parameter ideal, since both generators are divisible by $v$.  
\end{proof}

With swapped roles for $Y$ and $X$, we do not obtain any Steenrod closed parameter ideals.

\begin{lemma}\label{lem:nonfibered_step2} 
Let 
$(X,Y)\coloneqq (v^ix_n^{2^m},u^{2^{m-1}}x_n^{2^{m}}+\Sq^1(x_n)^{2^{m}})$ be a mixed pair with $n\geq 1$, $m\ge 1$, and $0\le i <2^{m-1}$.
Then the following hold:
\begin{enumerate}
    \item The ideal $\langle vY^2,X^2\rangle$ is not a Steenrod closed parameter ideal.
    \item If $|X|>|Y|$, then the ideal $\langle vY^2,X^2+u^{|X|-|Y|}Y^2\rangle$ is not a Steenrod closed parameter ideal.
\end{enumerate}
\end{lemma}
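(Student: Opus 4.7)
The plan is, as in \cref{lem:fibered_step1} and \cref{lem:nonfibered_step1}, to apply \cref{lem:Condition} (for part (1)) and \cref{lem:vx2andy2plususx2} (for part (2)) with the roles of $X$ and $Y$ interchanged, and then to reduce the resulting membership condition modulo $v$, using the formulas of \cref{lem:SqXSqYmixedCoeffmodv}, to exhibit an explicit monomial obstruction in $\bbF_2[u]$.

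For part (1), I first dispose of the case $i\ge 1$: here $v\mid X$, so $v$ is a common factor of $vY^2$ and $X^2$, and \cref{lem:Coprime} shows $\langle vY^2,X^2\rangle$ is not a parameter ideal. When $i=0$, the ideal $\langle X^2,Y^2\rangle$ is Steenrod closed by \cref{lem:EvenEvenCase}, and \cref{lem:Condition} with $Y^2$ in the role of ``$X$'' and $X^2$ in the role of ``$Y$'' reduces Steenrod closedness of $\langle vY^2,X^2\rangle$ to the membership $\beta^2\in\langle v,X^2\rangle$, where $\Sq(X)=\alpha X+\beta Y$. Modulo $v$, the formula $\beta\equiv\sum_{k=0}^{2^n-2}u^{k\cdot 2^m}$ from \cref{lem:SqXSqYmixedCoeffmodv} shows that $\beta^2$ has a nonzero constant term, whereas $X^2\equiv u^{2^{n+m+1}-2^{m+1}}$, so $\beta^2\notin\langle v,X^2\rangle$ and the ideal is not Steenrod closed.

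For part (2), the hypothesis $|X|>|Y|$ gives $r:=|X|-|Y|=3i-2^m>0$, which combined with $i<2^{m-1}$ forces $i\ge 1$. Coprimality of $vY^2$ and $X^2+u^rY^2$ holds because $Y$ is coprime to both $v$ and $X$, and $v\nmid X^2+u^rY^2$ (since $v\mid X^2$ but $v\nmid u^rY^2$), so the ideal is a parameter ideal and the content of the claim is that it is not Steenrod closed. Applying \cref{lem:vx2andy2plususx2} with $X$ and $Y$ swapped yields the criterion
\[\beta^2+(u+v+u^2)^r\delta^2+u^r\alpha^2+(u+v+u^2)^r u^r\gamma^2\in\langle v,X^2+u^rY^2\rangle,\]
which, using $\beta\equiv\gamma\equiv 0$ and $X\equiv 0$ modulo $v$ (by \cref{lem:SqXSqYmixedCoeffmodv}) and cancelling the common factor $u^r$, simplifies to $(1+u)^r\delta^2+\alpha^2\equiv 0\pmod{u^N}$ in $\bbF_2[u]$, where $N:=2^{n+m+1}-2^m$.

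The main obstacle is this final polynomial congruence. Using $u^{2^k}+1=(1+u)^{2^k}$ together with \cref{lem:SqXSqYmixedCoeffmodv}, I get $\delta^2\equiv(1+u)^N$ and $\alpha^2\equiv(1+u)^{2i}\bigl[(1+u)^N-u^N\bigr]$ modulo $v$, so the congruence becomes $(1+u)^N\bigl[(1+u)^r+(1+u)^{2i}\bigr]\equiv 0\pmod{u^N}$. Since $(1+u)^N$ is a unit modulo $u^N$ and $2i-r=2^m-i>0$, this reduces further to $(1+u)^{2^m-i}\equiv 1\pmod{u^N}$. The assumption $i\in[1,2^{m-1})$ places $2^m-i$ strictly between $2^{m-1}$ and $2^m$, so its binary expansion has a $1$ in position $m-1$; Lucas's theorem then makes $\binom{2^m-i}{2^{m-1}}$ odd, producing a nonzero coefficient of $u^{2^{m-1}}$ in $(1+u)^{2^m-i}$ which, because $2^{m-1}<N$, contradicts the required congruence.
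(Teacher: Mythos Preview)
Your proof is correct and follows the paper's overall strategy of swapping the roles of $X$ and $Y$ in \cref{lem:Condition} and \cref{lem:vx2andy2plususx2} and then reducing modulo $v$ via \cref{lem:SqXSqYmixedCoeffmodv}. The differences are in execution. For part~(1) with $i=0$, the paper bypasses \cref{lem:Condition} and argues directly that $\Sq^{2^{m+1}}(X^2)=\Sq^1(x_n)^{2^{m+1}}$ cannot lie in $\langle vY^2,X^2\rangle=\langle v\Sq^1(x_n)^{2^{m+1}},x_n^{2^{m+1}}\rangle$: for degree reasons it would have to be a multiple of $x_n^{2^{m+1}}$, contradicting coprimality of $x_n$ and $\Sq^1(x_n)$. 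For part~(2), the paper multiplies through by the unit $1+u^{2^m}$ and then locates the lowest-degree surviving monomial via the $2$-adic valuation of $r=3i-2^m$, whereas your rewriting $\delta^2\equiv(1+u)^N$ and $\alpha^2\equiv(1+u)^{2i}\bigl[(1+u)^N+u^N\bigr]$ lets you factor out $(1+u)^N$ and reduce the obstruction to the transparent statement $(1+u)^{2^m-i}\neq 1$ in $\bbF_2[u]/\langle u^N\rangle$. Your route is algebraically tidier; the paper's keeps closer to its earlier computations in \cref{lem:fibered_step1} and \cref{lem:nonfibered_step1}.
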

\begin{proof}
Consider $I=\langle vY^2,X^2\rangle$. If $i>0$, then both generators are divisible by $v$ and hence the ideal cannot be a parameter ideal. If $i=0$, we have $\langle vY^2,X^2\rangle = \langle v\Sq^1(x_n)^{2^{m+1}}, x_n^{2^{m+1}}\rangle$ and this ideal is not Steenrod closed, because it does not contain $\Sq^{2^{m+1}}(X^2)=\Sq^1(x_n)^{2^{m+1}}$. Otherwise it would be a multiple of $x_n^{2^{m+1}}$ for degree reasons. This contradicts the assumption that $\langle x_n,\Sq^1(x_n)\rangle$ is a parameter ideal.

 Now consider the case of $I=\langle vY^2,X^2+u^sY^2\rangle$ with $s=|X|-|Y|$. We are applying \cref{lem:vx2andy2plususx2} to the pair $Y,X$. Thus $X$ in \cref{lem:vx2andy2plususx2} corresponds to $Y$ here. Write $\Sq(Y)= \delta Y+\gamma X$, $\Sq(X)=\beta Y+\alpha X$ with the coefficients from \cref{lem:SqXSqYmixed}. Then the ideal $I$ is Steenrod closed if and only if
 \[z\coloneqq\beta^2+(u+v+u^2)^s \delta^2 +u^s\alpha^2+ (u+v+u^2)^s u^s \gamma^2
 \]
 lies in $\langle v, X^2 + u^s Y^2\rangle$. We will show that this is not the case. Since $s> 0$, we have $i>0$ and it follows from
 \cref{lem:SqXSqYmixedCoeffmodv} that
 \[\langle v, X^2 + u^s Y^2\rangle = \langle v, u^{|X|}\rangle.
 \]
It is helpful to record the degrees of $X,Y$ using that $|x_n|=2^{n+1}-2$:
\[|X|= 3i+2^{m+n+1}-2^{m+1},\quad
|Y|= 2^{n+m+1}-2^m,\quad 
s=|X|-|Y|= 3i -2^m.\]
We have mod $\langle v, u^{|X|}\rangle$:
$$z\equiv (u+u^2)^s(u^{2^{n+m+1}}+1)/(u^{2^{m}}+1) + u^s(1+u)^{2i}(u^{2^{n+m+1}-2^{m}}+1)/(u^{2^{m}}+1).$$
It suffices to show that this element is nonzero in $\bbF_2[u]/\langle u^{|X|}\rangle$. Multiplication with the unit $u^{2^{m}}+1$ in $\bbF_2[u]/\langle u^{|X|}\rangle$ yields
\begin{align*}
  (u^{2^m} +1) z \equiv  &  (u+u^2)^s(u^{2^{n+m+1}}+1)+ u^s(1+u)^{2i}(u^{2^{n+m+1}-2^{m}}+1)\\
    \equiv&u^s((1+u)^{2i}((1+u)^s+u^{2^{n+m+1}-2^{m}}+1))
\end{align*}
since $u^s u^{2^{n+m+1}} \equiv 0$ mod $u^{|X|}$.
Write $s= l' 2^l$ with $l'$ odd. Then $(1+u)^s= (1+u^{2^l})^{l'}$ and from $2i= 2(s+2^m)/3=2^{l+1}(l' +2^{m-l})/3$ it follows that $(1+u)^{2i}= (1+u^{2^{l+1}})^{(l'+2^{m-l})/3}$. We conclude that the smallest power of $u$ that appears in the sum is $u^s\cdot u^{2^l}$. This summand is nonzero in $\bbF_2[u]/\langle u^{|X|}\rangle$ since 
$2^l \leq s =3i-2^m < 3 \cdot 2^{m-1} -2^m \le 2^{m+n+1}-2^{m}=|Y|$ and hence $|u^s \cdot u^{2^l}| < |Y|+s=|X|$.
\end{proof}

We need the following lemma in the proof of our main theorem.

\begin{lemma}\label{lem:twisted_step}
Let $\langle x,\Sq^1(x)\rangle$ be a twisted Steenrod closed parameter ideal. Then the ideals $\langle vx^2,\Sq^1(x) ^2\rangle$ and $\langle x^2,v\Sq^1(x) ^2\rangle$ are not Steenrod closed parameter ideals.
\end{lemma}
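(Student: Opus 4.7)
My plan is to reduce to the classification from \cref{thm:classification_twisted_pairs}: we must have $x=x_n$ and $\Sq^1(x)=y_n$ for some $n\ge 1$, where the sequences $(x_n,y_n)$ and $(\mu_n)$ are those of \cref{def:TwistedPairsSequence} and \cref{lem:SeqAreGood}. With this reduction, the two ideals become $\langle vx_n^2,y_n^2\rangle$ and $\langle x_n^2,vy_n^2\rangle$, and I will treat them separately.

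For the first ideal $\langle vx_n^2,y_n^2\rangle$, I would simply observe that $v$ divides $y_n$ for every $n\ge 1$. This is immediate from \cref{lem:xymumodv}, which says $y_n\equiv 0$ modulo $v$; alternatively $y_1=v$ and $y_n=vx_{n-1}^2$ for $n\ge 2$ by definition. Consequently $v\mid y_n^2$ and $v\mid vx_n^2$, so the two generators share the common factor $v$ and in particular fail to be coprime. By \cref{lem:Coprime}, $\langle vx_n^2,y_n^2\rangle$ is not a parameter ideal.

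For the second ideal $\langle x_n^2,vy_n^2\rangle$, I would first check that it actually is a parameter ideal: since $x_n$ and $y_n$ are coprime and $v\mid y_n$, it follows that $x_n$ is coprime to $v$, hence $x_n^2$ and $vy_n^2$ are coprime and \cref{lem:Coprime} applies. Next, by \cref{lem:EvenEvenCase} applied to the Steenrod closed ideal $\langle x_n,y_n\rangle$, the squared ideal $\langle x_n^2,y_n^2\rangle$ is Steenrod closed. Thus I can invoke \cref{lem:Condition} with $X=y_n^2$, $Y=x_n^2$, $k=1$: the ideal $\langle x_n^2,vy_n^2\rangle$ is Steenrod closed if and only if for any decomposition $\Sq(x_n^2)=\lambda y_n^2+\mu x_n^2$ we have $\lambda\in\langle v,x_n^2\rangle$.

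The key computation is then to produce $\lambda$ explicitly and show $\lambda\notin\langle v,x_n^2\rangle$. Using the formula $\Sq(x_n)=\mu_{n-1}(x_n+y_n)+x_n^2$ from \cref{lem:SeqAreGood} and squaring,
\[
\Sq(x_n^2)=\Sq(x_n)^2=\mu_{n-1}^2 y_n^2+(\mu_{n-1}^2+x_n^2)x_n^2,
\]
so $\lambda=\mu_{n-1}^2$. I would then reduce modulo $v$ using \cref{lem:xymumodv}: in $\bbF_2[u]$ we have $\mu_{n-1}\equiv 1+u+\cdots+u^{2^n-2}$ and $x_n^2\equiv u^{2^{n+1}-2}$, so
\[
\mu_{n-1}^2\equiv 1+u^2+u^4+\cdots+u^{2^{n+1}-4}\quad\text{in }\bbF_2[u]/\langle u^{2^{n+1}-2}\rangle,
\]
which is nonzero (the constant term is $1$). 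Hence $\mu_{n-1}^2\notin\langle v,x_n^2\rangle$, and \cref{lem:Condition} rules out Steenrod closure. The only conceptual step is recognizing that \cref{lem:Condition} is the right tool; the rest is a straightforward residue calculation modulo $v$, and I expect no real obstacle.
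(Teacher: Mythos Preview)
Your argument is correct. For the first ideal $\langle vx^2,\Sq^1(x)^2\rangle$ you argue exactly as the paper does: $v$ divides $\Sq^1(x)$, so the generators are not coprime and \cref{lem:Coprime} rules out parameter ideals.

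For the second ideal $\langle x^2,v\Sq^1(x)^2\rangle$ you take a genuinely different route. The paper gives a one-line argument: if the ideal were Steenrod closed it would contain $\Sq^2(x^2)=\Sq^1(x)^2$, and for degree reasons this would force $\Sq^1(x)^2$ to be a multiple of $x^2$, contradicting coprimality of $x$ and $\Sq^1(x)$. Your approach instead feeds the explicit formula $\Sq(x_n)=\mu_{n-1}(x_n+y_n)+x_n^2$ into \cref{lem:Condition} and checks the residue $\mu_{n-1}^2\not\equiv 0$ in $\bbF_2[u]/\langle u^{2^{n+1}-2}\rangle$. This works, but it leans on the full classification (\cref{thm:classification_twisted_pairs}) and the formulas of \cref{lem:SeqAreGood} and \cref{lem:xymumodv}, whereas the paper's argument uses only the single identity $\Sq^2(x^2)=\Sq^1(x)^2$ and coprimality. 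The paper's approach is shorter and applies verbatim to any twisted pair without naming $x_n$; yours illustrates that \cref{lem:Condition} is a uniform tool for these questions, at the cost of a small explicit computation.
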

 
\begin{proof}
The ideal $\langle x^2,v\Sq^1(x)^2 \rangle$ is not Steenrod closed: if $\Sq^2(x^2)=\Sq^1(x)^2$ was in $\langle x^2,v\Sq^1(x)^2 \rangle$, then it would be a multiple of $x^2$ for degree reasons. We would thus have $\Sq^1(x)^2=\lambda x^2$. This contradicts the assumption that $x$ and $\Sq^1(x)$ are coprime. The ideal $\langle vx ^2, \Sq (x )^2 \rangle$ is not a parameter ideal, because $vx^2$ and $\Sq^1(x)^2$ are not coprime as $v$ divides $\Sq^1(x)$ by \cref{lem:xymumodv} and \cref{thm:classification_twisted_pairs}.
\end{proof}

The main result of this section is the following classification theorem.
 
\begin{theorem}\label{thm:classification_steenrodclosedparameterideals} 
The set of Steenrod closed parameter ideals in $H^*(BA_4;\bbF_2)$ consists of 
\begin{enumerate}
\item\label{item:fibered} the \emph{fibered ideals} $\langle v^k,u^l\rangle$ with $l\ge 1$ and $1\le k\le 2^t$, where $2^t$ is the largest power of $2$ dividing $l$;
\item\label{item:twisted} the \emph{twisted ideals} $\langle x_n,\Sq^1(x_n)\rangle$ for $n\ge 2$, where $x_n$ is recursively defined as 
$x_1=u$ and $x_{n+1} = ux_n^2+\Sq^1(x_n)^2$;
\item\label{item:mixed} and \emph{the mixed ideals} $\langle v^ix_n^{2^m},x_{n+1}^{2^{m-1}}\rangle$, where $m\ge 1$, and either
$n=1$ and $1\le i <2^{m-1}$, or $n \ge 2$ and $0\le i<2^{m-1}$.
\end{enumerate}
\end{theorem}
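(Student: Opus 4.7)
The approach is to combine \cref{thm:classification_fibered}, \cref{thm:classification_twisted_pairs}, and \cref{lem:SqXSqYmixed}, which together already establish that every fibered, twisted, or mixed ideal is a Steenrod closed parameter ideal, and then prove the converse by strong induction on $|X|+|Y|$, where $\{X,Y\}$ is a homogeneous system of parameters for the Steenrod closed parameter ideal $I$. By \cref{lem:lowergeneratorunique} I may assume $|X|<|Y|$, and the base case $\langle u,v\rangle$ is simultaneously fibered and twisted, so it lies in the list.

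For the inductive step I split on $\Sq^1(X)$ via \cref{lem:Sq1Xneq0means}. If $Y=\Sq^1(X)$, then $I$ is twisted and \cref{thm:classification_twisted_pairs} places $I$ in the twisted family. Otherwise $\Sq^1(X)=0$, and \cref{lem:sq1xeq0sq1yeq0} lets me replace $Y$ by a $Y'$ with $\Sq^1(Y')=0$. Then \cref{pro:generatorsoddeven} leaves three subcases: when both degrees are even, \cref{pro:square_of_good_generators} writes $I=J^{[2]}$ for a strictly smaller Steenrod closed parameter ideal $J$; when $|X|=3$ we have $X=v$ and $I=\langle v,y^2\rangle$ is already fibered; and when one degree is odd and exceeds $3$, \cref{lem:EvenOddCase} gives a strictly smaller Steenrod closed parameter ideal $\langle x,y\rangle$ with $I$ of the form $\langle vx^2,y^2\rangle$ or $\langle x^2,vy^2\rangle$, according to whether $|X|$ or $|Y|$ is the odd one.

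It then remains to apply the induction hypothesis to the smaller ideal and to match $I$ to one of the three listed families. The squaring reduction $I=J^{[2]}$ is essentially formal: $\langle v^{k'},u^{l'}\rangle^{[2]}$ stays fibered; $\langle x_n,y_n\rangle^{[2]}$ is for $n\geq 2$ the mixed ideal with $(i,m)=(0,1)$ thanks to the identity $x_{n+1}=ux_n^2+y_n^2$ recorded in \cref{eq:formulaformixedY}, while for $n=1$ it is the fibered $\langle u^2,v^2\rangle$; and squaring a mixed ideal with indices $(i,m,n)$ yields the mixed ideal with indices $(2i,m+1,n)$. The twist reduction is more delicate: \cref{lem:idealsvx2y2} restricts $I$ to one of at most three explicit ideals built from a system of parameters of $J=\langle x,y\rangle$, and the combination of \cref{lem:fibered_step1}, \cref{lem:nonfibered_step1}, \cref{lem:nonfibered_step2}, and \cref{lem:twisted_step} eliminates all but the expected one. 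When $J$ is fibered, the survivor extends the fibered family, with the constraint $k\leq 2^t$ arising exactly from \cref{lem:fibered_step1}\eqref{lem:fibered_step1i}, and the exceptional subcase \cref{lem:fibered_step1}\eqref{lem:fibered_step1iii} produces a new twisted ideal. When $J$ is twisted or mixed, only the first candidate of \cref{lem:nonfibered_step1} remains Steenrod closed, and it advances the mixed index $i$ by one.

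The main obstacle is the bookkeeping: ensuring in every subcase that the parameters $(k,l)$ or $(i,m,n)$ produced by the reduction satisfy the precise inequalities stipulated in the three families, and carefully handling the boundary case $n=1$, $i=0$ in the mixed description, which degenerates into the fibered ideal $\langle v^{2^m},u^{2^m}\rangle$ and is for this reason explicitly excluded from the mixed family. The concrete computations needed to rule out the non-listed candidates all reduce to manipulations of the formulas in \cref{lem:SqXSqYfibered} and \cref{lem:SqXSqYmixed} after reduction modulo $v$, which is precisely what \cref{lem:xymumodv} and \cref{lem:SqXSqYmixedCoeffmodv} are designed to make tractable.
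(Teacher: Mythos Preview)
Your overall architecture is exactly that of the paper: induct on $|X|+|Y|$, split into twisted vs.\ nontwisted via \cref{lem:Sq1Xneq0means}, and in the nontwisted case use \cref{pro:square_of_good_generators} and \cref{pro:generatorsoddeven} to reduce to a smaller $J$, then invoke \cref{lem:idealsvx2y2} to enumerate the three possible shapes $\langle vX^2,Y^2\rangle$, $\langle vX^2,Y^2+u^{|Y|-|X|}X^2\rangle$, $\langle X^2,vY^2\rangle$ and eliminate the bad ones. The squaring step and the degree-$3$ case are handled correctly.

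However, your identification of the surviving ideals in the odd-degree reduction is wrong in two places, and the errors are complementary.

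First, the exceptional subcase \cref{lem:fibered_step1}\eqref{lem:fibered_step1iii} does \emph{not} produce a twisted ideal in general. With $(X,Y)=(v^{2^t},u^{2^t})$ it yields
\[
\langle vY^2,\,X^2+u^{|X|-|Y|}Y^2\rangle=\langle vu^{2^{t+1}},\,v^{2^{t+1}}+u^{3\cdot 2^t}\rangle=\langle v\,x_1^{2^{t+1}},\,x_2^{2^t}\rangle,
\]
which is the mixed ideal with $n=1$, $i=1$, $m=t+1$. Only for $t=0$ does this collapse to the twisted $\langle x_2,y_2\rangle$ (the boundary $i=2^{m-1}$ excluded from the mixed family). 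This subcase is precisely where the mixed ideals with $n=1$ enter the classification, and you have lost them.

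Second, when $J$ is twisted you cannot invoke \cref{lem:nonfibered_step1}: that lemma is stated only for mixed pairs. For $J=\langle x_n,y_n\rangle$ with $n\ge 2$, the candidate $\langle vx_n^2,y_n^2\rangle$ is not even a parameter ideal (both generators are divisible by $v$, since $v\mid y_n$), and $\langle x_n^2,vy_n^2\rangle$ fails by \cref{lem:twisted_step}. The remaining candidate $\langle vx_n^2,\,y_n^2+ux_n^2\rangle=\langle y_{n+1},x_{n+1}\rangle$ is twisted, not mixed, so in the paper's framework (where $I$ is assumed neither fibered nor twisted) this subcase is vacuous; in your framework it lands back in the twisted family. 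Either way it does not ``advance the mixed index $i$''.

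In short, you have swapped the destinations of two subcases: the fibered exceptional case feeds the mixed family (at $n=1$), while the twisted-$J$ case stays twisted. Once these are corrected, your argument coincides with the paper's.
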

 
\begin{proof}
We have classified the twisted ideals in \cref{thm:classification_twisted_pairs} and the fibered ideals in \cref{thm:classification_fibered}. The ideals in \eqref{item:mixed} are indeed Steenrod closed parameter ideals by \cref{lem:SqXSqYmixed} and \cref{eq:formulaformixedY}. 

We will show that any Steenrod closed parameter ideal that is neither twisted nor fibered is of the form \eqref{item:mixed}. 
We proceed by induction on the sum of the degrees of the parameters.
Let $I$ be a Steenrod closed parameter ideal that is neither fibered nor twisted. 

Let us first consider the case where both parameters have even degrees.
By \cref{pro:square_of_good_generators}, $I=J^{[2]}$ for some Steenrod closed parameter ideal $J$. By induction assumption $J$ is either fibered, twisted, or mixed. If $J$ is fibered, so is $I$. If $J$ is mixed, so is $I$. If $J$ is twisted, say $J=\langle x_n,\Sq^1(x_n)\rangle$ for $n\ge 2$, then $I= \langle x_n^2,ux_n^2+\Sq^1(x_n)^2\rangle =\langle x_n^2, x_{n+1})$ and thus it is a mixed  ideal with $i=0$ and $m=1$.

It remains to consider the case where at least one of the parameters has odd degree. By \cref{pro:generatorsoddeven}, 
they cannot both be odd. If one of the parameters has degree $3$, then $I=\langle v,y^2\rangle$ by \cref{pro:generatorsoddeven} and thus it is fibered by \cref{def:fibered}.
Otherwise we can find parameters $\{vx^2,y^2\}$ of $I$ such that $J=\langle x,y\rangle$ is also a Steenrod closed parameter ideal by \cref{pro:generatorsoddeven}. By induction the ideal $J$ is either fibered, twisted, or mixed.

First, consider the case that $J$ is fibered and thus generated by a pair of the form $(X,Y)=(v^k,u^{2^{t}c})$ with $1\leq k\leq 2^t$ and $c$ odd. The ideal $I$ has to be one of the ideals from \cref{lem:idealsvx2y2}:   
\begin{enumerate}[label=\roman*),leftmargin=*]
\item $\langle vX^2,Y^2\rangle=\langle v^{2k+1},u^{2^{t+1}c}\rangle$ is a Steenrod closed parameter ideal if and only if $k<2^t$ by \cref{lem:fibered_step1}\eqref{lem:fibered_step1i}. In this case, it is again fibered.
\item   If $|X|< |Y|$, then the ideal $I$ can be $\langle vX^2,Y^2+u^{|Y|-|X|}X^2\rangle$, but  
such ideals are not a Steenrod closed parameter ideals by \cref{lem:fibered_step1}\eqref{lem:fibered_step1ii}.
\item $\langle X^2,vY^2\rangle$ is not a Steenrod closed parameter ideal, since both generators are divisible by $v$.
\item If $|X|>|Y|$, we can have $I=\langle X^2+u^{|X|-|Y|}Y^2,vY^2\rangle$. By \cref{lem:fibered_step1}\eqref{lem:fibered_step1iii}, this ideal 
is a Steenrod closed parameter ideal if and only if $k=2^t$ and $c=1$. This gives $I=\langle v^{2^{t+1}}+u^{3\cdot 2^{t}},vu^{2^{t+1}}\rangle$ which is of the form \eqref{item:mixed} with $i=1$, $n=1$, and $m=t+1$.
\end{enumerate}

Secondly, consider the case that $J$ is twisted and nonfibered. This means that $J=\langle x_n, \Sq^1(x_n)\rangle$ for some $n\ge 2$. By \cref{rem:twistedUniqueParameters}, $\{x_n,\Sq^1(x_n)\}$ is the only choice of parameters for $J$. Neither of the ideals $\langle vx_n^2,\Sq^1(x_n)^2\rangle$ and $\langle x_n^2,v\Sq^1(x_n)^2\rangle$ is a Steenrod closed parameter ideal by \cref{lem:twisted_step}.
 
Thirdly, consider the case that $J$ is mixed, i.e., generated by a pair of the form 
\[(X,Y)=(v^ix_n^{2^m},u^{2^{m-1}}x_n^{2^{m}}+\Sq^1(x_n)^{2^{m}})\]
with $m\ge 1$, $n \ge 2$ and $0\le i<2^{m-1}$ or $m\ge 1$, $n=1$ and $1\le i <2^{m-1}$.
The ideal $I$ has to be one of the ideals from \cref{lem:idealsvx2y2}:
\begin{enumerate}[label=\roman*),leftmargin=*]
\item $\langle vX^2,Y^2\rangle$ is again generated by a pair of the form \eqref{item:mixed} by \cref{lem:nonfibered_step1}.
\item $\langle vX^2,Y^2+u^{|Y|-|X|}X^2\rangle$ if $|Y|>|X|$, which is not a Steenrod closed parameter ideal 
by \cref{lem:nonfibered_step1}. 
\item $\langle X^2,vY^2\rangle$ is not a Steenrod closed parameter ideal by \cref{lem:nonfibered_step2}.
\item $\langle X^2+u^{|X|-|Y|}Y^2,vY^2\rangle$ if $|X|>|Y|$, 
which is not a Steenrod closed parameter ideal by \cref{lem:nonfibered_step2}.
\end{enumerate}
This completes the proof of the theorem.
\end{proof}

To establish \cref{thm:Intro-classification_steenrodclosedparameterideals}, we also need the following result.

\begin{proposition}\label{pro:uniquenessindegrees}
For a given pair of natural numbers there is at most one Steenrod closed parameter ideal with parameters of these degrees. Additionally, the union of families listed in $\eqref{item:fibered}$-$\eqref{item:mixed}$ of \cref{thm:classification_steenrodclosedparameterideals} is disjoint.
\end{proposition}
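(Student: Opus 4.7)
The plan is to exploit the explicit degree formulas for each family and carry out a uniform analysis based on parity and $2$-adic valuation. Using that $|x_n|=2^{n+1}-2$ by induction, the parameter degree multisets are: $\{3k, 2l\}$ for a fibered ideal $\langle v^k, u^l\rangle$ with $l=2^tc$, $c$ odd, $1\leq k\leq 2^t$; $\{2^{n+1}-2,2^{n+1}-1\}$ for a twisted ideal $\langle x_n,\Sq^1(x_n)\rangle$ with $n\geq 2$; and $\{3i+2^{m+1}(2^n-1),\ 2^m(2^{n+1}-1)\}$ for a mixed ideal $\langle v^ix_n^{2^m}, x_{n+1}^{2^{m-1}}\rangle$. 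These degrees are well-defined invariants of the ideal by \cref{lem:lowergeneratorunique}.

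For uniqueness within each family, the twisted case is immediate since the pair consists of consecutive integers which determine $n$. In the fibered case, the ordered matching $(3k,2l)=(3k',2l')$ forces $(k,l)=(k',l')$, while the swap $3k=2l',\ 2l=3k'$ forces $k=2a$, $k'=2a'$, $l=3a'$, $l'=3a$; combined with $k\leq 2^{\nu_2(l)}$ this yields $a\leq 2^{\nu_2(a')-1}\leq a'/2$ and symmetrically $a'\leq a/2$, a contradiction. In the mixed case, $|Y|$ is even with odd part of the special form $2^{n+1}-1$, so $m=\nu_2(|Y|)$ and $n$ are read off from $|Y|$, and then $i$ is recovered from $|X|$; to rule out the swap $(|X|,|Y|)\leftrightarrow(|Y|',|X|')$, when $i,i'\geq 1$ the bounds $i<2^{m-1}$ force $\nu_2(|X|)=\nu_2(i)\leq m-2$, so symmetry gives $m'\leq m-2$ and $m\leq m'-2$, which is impossible, while if $i=0$ (so $n\geq 2$) matching forces $m'=m+1$, $n'=n-1$, and solving the remaining equation yields $i'=2^m$, violating $i'<2^{m'-1}=2^m$.

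For disjointness between the three families: comparing fibered and twisted, parity forces $3k=2^{n+1}-1$ and $l=2^n-1$ odd, hence $2^t=1$ and $k=1$, giving $n=1$ and contradicting $n\geq 2$. Comparing fibered and mixed, matching the two ordered pairs the same way gives $t=m-1$ and $k=i+2^{m+1}(2^n-1)/3\geq 2^{m+1}>2^{m-1}=2^t$ (possible only for even $n\geq 2$), violating $k\leq 2^t$; the swapped assignment $3k=|Y|$ forces $\nu_2(k)=m$, and either $t=m$ together with $k=2^m(2^{n+1}-1)/3>2^m$ when $i=0$, or $\nu_2(2l)=\nu_2(i)\leq m-2<m=\nu_2(k)\leq t$ when $i\geq 1$, both contradicting $k\leq 2^t$. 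Comparing twisted and mixed, $|Y|=2^m(2^{n+1}-1)$ matched to $2(2^{n'}-1)$ forces $m=1$, hence $i=0$ by the mixed constraints, and then $|X|=4(2^n-1)=2^{n+2}-4$ would equal the odd twisted degree $2^{n+2}-1$, which is impossible.

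The main obstacle will be the book-keeping in the fibered-versus-mixed comparison, where the swap branch requires separate treatment of $i=0$ versus $i\geq 1$ and careful tracking of both the bound $k\leq 2^{\nu_2(l)}$ and the $2$-adic valuation of $|X|=3i+2^{m+1}(2^n-1)$. The calculations are elementary but the inequalities are all sharp, which is ultimately what makes the uniqueness claim go through.
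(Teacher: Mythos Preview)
Your proof is correct, but it takes a genuinely different route from the paper's argument. The paper's proof exploits the structural operation $I\mapsto I^{[2]}$: since \cref{pro:square_of_good_generators} shows that any Steenrod closed parameter ideal with both parameters of even degree is $J^{[2]}$ for a Steenrod closed parameter ideal $J$ of smaller total degree, and since squaring preserves the type (fibered stays fibered, mixed stays mixed, twisted becomes mixed), one may inductively reduce to the case where one parameter degree is odd. After this reduction the fibered-versus-mixed comparison and the within-family uniqueness become short parity arguments. By contrast, you avoid this reduction entirely and carry out a direct analysis via $2$-adic valuations, tracking $\nu_2(|X|)$, $\nu_2(|Y|)$ and the constraints $k\le 2^{\nu_2(l)}$ and $i<2^{m-1}$ through all ordered and swapped matchings. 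Your approach is more elementary in that it does not invoke \cref{pro:square_of_good_generators} or \cref{lem:ImapstoI[2]_injective}, at the cost of more case-work (notably the $i=0$ versus $i\ge 1$ split in the mixed swap case and in the fibered-versus-mixed swap). One expositional point: in your mixed swap analysis you treat the cases ``$i,i'\ge 1$'' and ``$i=0$'' explicitly, and the remaining case ``$i\ge 1$, $i'=0$'' follows by symmetry of the roles of the two ideals---it would be cleaner to state this explicitly. Similarly, in the fibered-versus-mixed swap with $i=0$ you implicitly use that $3\mid 2^{n+1}-1$ forces $n$ odd (and hence $n\ge 3$ since $i=0$ requires $n\ge 2$), which is what makes $(2^{n+1}-1)/3>1$; a sentence to this effect would tighten the exposition.
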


\begin{proof}
For a Steenrod closed parameter ideal $I\subset H^*(BA_4)$, we will denote by $|I|$ the set of degrees $\{|X|,|Y|\}$, where $\{X,Y\}$ is a homogeneous system of parameters for $I$. We will show that the degrees of the Steenrod closed parameter ideals listed in \cref{thm:classification_steenrodclosedparameterideals} are all distinct.

There is no Steenrod closed parameter ideal $\langle X, Y\rangle$ such that $|X|$ and $|Y|$ are odd. Indeed, for twisted ideals this holds since the degrees of the parameters differ by $1$. For nontwisted ideals this follows from \cref{pro:generatorsoddeven}.

Note that if $I$ is a fibered ideal, then $|I|=\{3k, 2^{t+1} c\}$ with $c$ odd and $1 \leq k\leq 2^t$. If $I$ is twisted and not $\langle v, u\rangle$, then $|I|=\{2^{n+1}-2, 2^{n+1}-1\}$ for some $n\geq 2$.
Finally if $I$ is mixed then $|I|=\{3i+(2^r-1) 2^{m+1}, (2^{r+1} -1) 2^m \}$ for some $m\geq 1$ and $r\geq 1$,
where $0 \leq i < 2^{m-1} $ if $r \geq 2$ and $1 \leq i < 2^{m-1}$ if $r=1$. 

Suppose that there is a fibered and a twisted ideal different from $\langle v,u\rangle$ with same degree. Then we must have $3k=2^{n+1}-1$ and $2^{t+1} c= 2(2^n-1)$.
The second equality implies that $t=0$. Since $k \leq 2^t$, we have $k=1$. Then the first equality gives $n=1$ which is not possible since $n \geq 2$ by our assumption.

If there is a twisted and a mixed ideal with the same degree, then $2(2^n-1)=(2^{r+1} -1) 2^m$ and $2^{n+1} -1= 3i+(2^r-1)2^{m+1}$.
The first equality gives $m=1$. So we must have $i=0$ by the given inequalities. But then the second equality
does not hold since the right-hand side is even and the left-hand side is odd.

For the case of a fibered and a mixed ideal with the same degree, we need the following observations. If $I=\langle X,Y\rangle$ is a Steenrod closed parameter ideal with both parameters of even degrees, then there exists a unique Steenrod closed parameter ideal $J$ such that $I=J^{[2]}$ by \cref{pro:square_of_good_generators}. Note that $|J|=\{|X|/2,|Y|/2\}$. For $I$ fibered, the degree of $J$ is again the degree of a fibered ideal. For $I$ mixed, the degree of $J$ is either the degree of a mixed ideal or the degree of a twisted ideal. Hence by induction it suffices to prove the statement when one parameter is of odd degree and the other parameter has even degree.

Suppose that there is a fibered and a mixed ideal with the same degree. By the discussion above we can assume that one of the parameters is of odd degree. Then $k$ is odd, and we have $3k=3i+(2^r-1)2^{m+1}$ and $ 2^{t+1} c=(2^{r+1} -1) 2^m $. This implies that
$t+1=m$. The inequality $k \leq 2^t$ gives $k \leq 2^{m-1}$. From the first equality we obtain
$$ 3i + (2^r-1) 2^{m+1} \leq 3 \cdot 2^{m-1},$$
which gives $0 \leq 3i \leq (3-4 (2^r-1)) 2^{m-1}$. Since for $r \geq 1$, we have $3-4(2^r-1)<0$, this gives a contradiction.

Hence we can conclude that no two Steenrod closed parameter ideals of different type have parameters of the same degrees. In particular, the union in \cref{thm:classification_steenrodclosedparameterideals} is disjoint.

To complete the proof, we also need to eliminate the possibility that two different Steenrod closed parameter ideals of the same type have parameters with equal degrees.
This is clear in the twisted case since they are defined recursively. For the fibered case, assume that
there are two pairs of numbers $k, k', t, t', c, c'$ satisfying the conditions for the fibered ideal 
such that $\{3k, 2^{t+1}c\}=\{3k', 2^{t'+1} c'\}$. Since by induction we can assume that one of the parameters is of odd degree,
then we must have $3k=3k'$ since these two are the only possible odd numbers in the pair. As $c'$ and $c$ are odd, we obtain that $t=t'$ and $c=c'$, 
hence the two fibered ideals are equal. For the mixed case, suppose that 
for some $i, i', r, r', m, m'$ satisfying the conditions for a mixed ideal, we have
$$\{ 3i+(2^r-1) 2^{m+1}, (2^{r+1} -1) 2^m \}=\{ 3i'+(2{r'} -1) 2^{m'+1}, (2^{r'+1}-1) 2^{m'} \}$$
Again by induction we can assume that one of the degrees is odd. The only odd degree terms are the ones that involve $i$ and $i'$, so
we must have $(2^{r+1} -1)2^m= (2^{r'+1} -1) 2^{m'}$. This gives $r=r'$ and $m=m'$.
From this we obtain $i=i'$, and hence the two mixed ideals are equal.
\end{proof}
 
\cref{thm:classification_steenrodclosedparameterideals} and \cref{pro:uniquenessindegrees} together complete the proof of \cref{thm:Intro-classification_steenrodclosedparameterideals}, the main theorem of the paper.

\begin{remark}
From the statement of \cref{thm:Intro-classification_steenrodclosedparameterideals}, we can observe that parameters of a Steenrod closed parameter ideal can be chosen so that one of the parameters is divisible by $v$. In the fibered case this is clear from the definition.
In the twisted case it follows from the recursion formula for $x_n$. In the mixed case, this is obvious when $i >0$. If $i=0$, then
the mixed ideal is $\langle x_n ^{2^m} , x_{n+1} ^{2^m-1} \rangle=\langle x_n ^{2^m} , \Sq ^1 (x_n ) ^{2^m}\rangle$. So $v$ divides one of these parameters since $v$ divides one of the parameters in the twisted case.
\end{remark}

Calculating the degrees of the parameters and using uniqueness of \cref{pro:uniquenessindegrees} yields the following result. 

\begin{corollary}\label{cor:degreesofparameters} Let $I\subset H^*(BA_4)$ be a Steenrod closed parameter ideal. Then the (unordered) degrees of the parameters of $I$ are of the form
\begin{enumerate}
    \item \label{it:degreesofparametersi}$(3k,2l)$ with $l\ge 1$ and $1\le k\le 2^t$, where $2^t$ is the largest power of $2$ dividing $l$, or
\item \label{it:degreesofparametersii} $(3i+2^{m+n+1}-2^{m+1},2^{m+n+1}-2^m)$
for $m\ge 0$, $n \ge 1$ and $0\le i<2^{m-1}$.
\end{enumerate}
For each such pair there is a unique Steenrod closed parameter ideal with parameters of these degrees.
\end{corollary}

\begin{remark}\label{rem:oneDegreeDivisibleByThree} Note that at least one of the degrees of the parameters of a Steenrod closed parameter ideal is divisible by three by \cref{cor:degreesofparameters}. 

To decide whether a pair of natural numbers is in the list above, it helps to write the numbers in base two.
\end{remark}

\begin{lemma}\label{lem:rlogr}
    Let $c(r)$ denote the number of Steenrod closed parameter ideals with parameters of degree at most $r$. Then $c(r)\in O(r \log(r))$ .
\end{lemma}
\begin{proof}
Instead of estimating the number of pairs in \eqref{it:degreesofparametersi}, we relax the conditions and estimate the number of pairs $(k,l)$ with $1\leq l<r$ and $1\leq k\leq 2^t$, where $2^t$ is the largest power of $2$ dividing $l$. If $r=2^s$ is a power of $2$, then for fixed $0\leq t\leq s$, there are $2^{s-1-t}$ possibilities for $l$ as can be seen by writing $l$ in binary expansion. For fixed $l$, there are $2^t$ possibilities for $k$. Thus the number of pairs $(k,l)$ is given by
\[\sum_{t=0}^{s-1} 2^{s-1-t} \cdot 2^t = \sum_{t=0}^{s-1} 2^{s-1} = 2^{s-1}\cdot s.\]
For arbitrary $r$, let $s$ be the smallest integer greater or equal to $\log_2(r)$. We have $r\leq 2^s< 2r$. Then the number of pairs $(k,l)$ is bounded by 
$2^{s-1}\cdot s\leq r(\log_2(r)+1)$.

    For case \eqref{it:degreesofparametersii}, we estimate the number of pairs with $2^{m+n+1}-2^m\leq r$. Since $2^{m+n}<2^m(2^{n+1}-1)\le r$, it follows that $m+n\leq \log_2(r)$. Let $s$ be the biggest integer not greater than $\log_2(r)$. For a given pair $m,n$, there are $2^{m-1}$ choices for $i$ if $m>0$ and one choice if $m=0$. Thus the number of pairs is bounded by
    \begin{equation*}
    \sum_{n=1}^{s}(1+\sum_{m=1}^{s-n} 2^{m-1})
    = \sum_{n=1}^{s} 2^{s-n}
    \le 2^{s} \le r\,.
    \end{equation*}
Hence, we obtain that $c(r)\in O(r\log(r))$ as claimed.
\end{proof}

To obtain a classification of Steenrod closed parameter ideals in $H^* (B\SO(3))$, we need the following two results. The first one is an immediate consequence of the classification in \cref{thm:classification_steenrodclosedparameterideals}.

\begin{corollary}\label{cor:SteenrodclosedparameterIdeals_parametersinuv}
Every Steenrod closed parameter ideal of $H^*(BA_4)$ is of the form $\langle X,Y\rangle$ with $X,Y\in \bbF_2[u,v]\subset H^*(BA_4)$.
\end{corollary}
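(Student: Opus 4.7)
The plan is to invoke the full classification in \cref{thm:classification_steenrodclosedparameterideals} and check the three cases individually; the statement is really a bookkeeping corollary to the explicit descriptions already obtained. First I would observe that for a fibered ideal $\langle v^k,u^l\rangle$ the parameters manifestly lie in $\bbF_2[u,v]$, so nothing needs to be done in case (1).

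For the twisted and mixed cases the issue reduces to showing that, for every $n\ge 1$, both $x_n$ and $y_n=\Sq^1(x_n)$ lie in $\bbF_2[u,v]$. This follows by a direct induction on $n$ from the recursion of \cref{def:TwistedPairsSequence}: the base case $(x_1,y_1)=(u,v)$ is in $\bbF_2[u,v]$, and if $x_{n-1},y_{n-1}\in \bbF_2[u,v]$ then
\[x_n = u x_{n-1}^2 + y_{n-1}^2 \in \bbF_2[u,v], \qquad y_n = v x_{n-1}^2 \in \bbF_2[u,v].\]
Combined with \cref{lem:SeqAreGood}, which identifies $y_n$ with $\Sq^1(x_n)$, this handles case (2). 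For case (3) the generators of a mixed ideal have the form $v^i x_n^{2^m}$ and $x_{n+1}^{2^{m-1}}$, both of which are polynomial expressions in elements of $\bbF_2[u,v]$ by the previous paragraph, hence lie in $\bbF_2[u,v]$.

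There is no real obstacle here beyond assembling the pieces already established; the only thing to be slightly careful about is that in the twisted case one must remember that the second parameter is $\Sq^1(x_n)$ rather than an abstract element, and use \cref{lem:SeqAreGood} to rewrite it as $y_n$ before applying the inductive description. Since the three families in \cref{thm:classification_steenrodclosedparameterideals} exhaust all Steenrod closed parameter ideals, the corollary follows.
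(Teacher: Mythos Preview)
Your proposal is correct and matches the paper's approach: the corollary is stated without proof immediately after \cref{thm:classification_steenrodclosedparameterideals}, and the intended argument is exactly the one you give—inspect the explicit generators in each of the three families and note (via the recursion of \cref{def:TwistedPairsSequence}) that they all lie in $\bbF_2[u,v]$.
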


\begin{lemma}\label{lem:injectiononparameterideals}
The map sending a Steenrod closed parameter ideal in $H^*(B\SO(3))$ to its extension in $H^*(BA_4)$ is injective. \end{lemma}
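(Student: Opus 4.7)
The plan is to deduce this directly from \cref{lem:contractionofidealinHSO3}, which states that for \emph{any} ideal $I\subset H^*(B\SO(3))$ one has $H^*(BA_4)I\cap H^*(B\SO(3))=I$. Given this, injectivity is a formal consequence: if $I_1,I_2\subset H^*(B\SO(3))$ are Steenrod closed parameter ideals with the same extension $H^*(BA_4)I_1=H^*(BA_4)I_2$, then intersecting with $H^*(B\SO(3))$ yields
\[I_1=H^*(BA_4)I_1\cap H^*(B\SO(3))=H^*(BA_4)I_2\cap H^*(B\SO(3))=I_2.\]

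So my proof would consist of a single invocation of \cref{lem:contractionofidealinHSO3}, with a short remark that neither the ``Steenrod closed'' nor the ``parameter ideal'' assumption is actually used here — these hypotheses are only needed in the subsequent corollary \cref{cor:classification_SteenrodclosedParameterIdeal_SO3}, where combining this injectivity with the classification of Steenrod closed parameter ideals in $H^*(BA_4)$ yields the corresponding classification in the Dickson algebra.

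There is no genuine obstacle. If one preferred to avoid citing \cref{lem:contractionofidealinHSO3} and instead argue in place, one would repeat its short argument: the direct sum decomposition $H^*(BA_4)\cong \bbF_2[u,v]\oplus \bbF_2[u,v]w$ as a graded $H^*(B\SO(3))$-module implies that any element of an extended ideal $H^*(BA_4)I$ which happens to lie in the subring $H^*(B\SO(3))=\bbF_2[u,v]$ must have vanishing $w$-component, and hence is already an $\bbF_2[u,v]$-linear combination of elements of $I$; this is precisely the inclusion $H^*(BA_4)I\cap H^*(B\SO(3))\subset I$, the other inclusion being trivial.
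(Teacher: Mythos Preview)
Your proof is correct and in fact cleaner than the paper's. The paper does not cite \cref{lem:contractionofidealinHSO3}; instead it argues directly at the level of parameters: given Steenrod closed parameter ideals $I=\langle X,Y\rangle$ and $J=\langle X',Y'\rangle$ in $H^*(B\SO(3))$ with equal extension in $R=H^*(BA_4)$, it uses the degree argument behind \cref{lem:lowergeneratorunique} (applied to the extended ideal) to get $|X|<|Y|$, $|X'|<|Y'|$, hence $X=X'$ and $Y'=Y+\lambda X$ for some $\lambda\in R$, and then invokes \cref{lem:divisionlandsininvariantring} to conclude that $\lambda$ is already $\GL_2(2)$-invariant, whence $I=J$. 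Your route bypasses all of this by appealing to the contraction identity, and as you note, it makes no use of the Steenrod-closed or parameter hypotheses. The paper's argument genuinely needs the Steenrod-closed assumption (to force the parameter degrees to be distinct), so your proof is strictly more general as well as shorter.
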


\begin{proof}
Let $I$, $J$ be parameter ideals of $H^*(B\SO(3))$ with parameters $\{X,Y\}$ and $\{X',Y'\}$, respectively. Suppose that $RI=RJ$ for $R=H^*(BA_4)$. 

After renaming the parameters we may assume that $|X|<|Y|$ and $|X'|<|Y'|$. For degree reasons and since we work over $\bbF_2$, we have $X=X'$ and $Y'= Y+\lambda X$ for some homogeneous element $\lambda\in H^*(BA_4)$. We conclude from \cref{lem:divisionlandsininvariantring} that $\lambda\in \bbF_2[a,b]$ is $\GL_2(2)$-invariant. It follows that $I=J$.
\end{proof}

We obtain the following classification of Steenrod closed parameter ideals in $H^*(B\SO(3))$.
\begin{corollary}\label{cor:classification_SteenrodclosedParameterIdeal_SO3}
Restriction of ideals yields a bijection between Steenrod closed parameter ideals in $H^*(BA_4)$ and Steenrod closed parameter ideals in the Dickson algebra $H^*(B\SO(3))$. Thus the Steenrod closed parameter ideals in $H^*(B\SO(3))$ are the restrictions of the ideals listed in \cref{thm:classification_steenrodclosedparameterideals}.
\end{corollary}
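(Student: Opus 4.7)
The plan is to exhibit the inverse bijection explicitly via extension of ideals, and then read off the consequence. Write $R = H^*(B\SO(3)) = \bbF_2[u,v]$ and $S = H^*(BA_4)$. Define $c\colon I \mapsto I \cap R$ and $e\colon J \mapsto SJ$ on Steenrod closed parameter ideals. I would organize the argument into the four standard steps: $c\circ e = \id$, both maps preserve being a Steenrod closed parameter ideal, $e$ is injective, and $e$ is surjective.

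First I would observe that $e$ sends Steenrod closed parameter ideals in $R$ to Steenrod closed parameter ideals in $S$: Steenrod closedness of $e(J)$ follows from \cref{lem:steenrodclosedinextension}, while the parameter property is preserved because generators of $J$ that are coprime in $R$ remain coprime in $S$ (the three rings involved share the same coprimality via \cref{lem:Coprime}, and two coprime homogeneous elements of positive degree generate a parameter ideal in each). The identity $c\circ e = \id$ on ideals of $R$ is exactly \cref{lem:contractionofidealinHSO3}, so $c$ is a left inverse of $e$. Injectivity of $e$ on Steenrod closed parameter ideals is then immediate, and it is also the content of \cref{lem:injectiononparameterideals}.

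The main step is surjectivity of $e$, and this is where the classification enters. Given a Steenrod closed parameter ideal $I\subset S$, \cref{cor:SteenrodclosedparameterIdeals_parametersinuv} provides a system of parameters $\{X,Y\}$ for $I$ with $X,Y\in R$. Let $J = \langle X,Y\rangle_R$. Then $X,Y$ are coprime in $S$, hence coprime in $R$ by \cref{lem:Coprime}, so $J$ is a parameter ideal of $R$. Steenrod closedness of $J$ follows from \cref{lem:steenrodclosedinextension} applied to $e(J) = I$. Finally $e(J) = SJ = \langle X,Y\rangle_S = I$, so $I$ lies in the image. Combining the four steps, $e$ (and hence $c$) is a bijection between the Steenrod closed parameter ideals of $R$ and those of $S$.

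The last sentence of the corollary follows formally: the Steenrod closed parameter ideals in $R$ are, by the bijection, precisely the contractions $c(I) = I\cap R$ of the ideals $I$ listed in \cref{thm:Intro-classification_steenrodclosedparameterideals}. The only potential obstacle I foresee is checking that a representative pair of parameters indeed lies in $R$, but this is already packaged in \cref{cor:SteenrodclosedparameterIdeals_parametersinuv}, which in turn is read off each of the three families in the classification; no further computation is needed.
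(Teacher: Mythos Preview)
Your proposal is correct and follows essentially the same route as the paper: show that extension $e$ is an injective map on Steenrod closed parameter ideals (via \cref{lem:injectiononparameterideals} and \cref{lem:steenrodclosedinextension}), show it is surjective using \cref{cor:SteenrodclosedparameterIdeals_parametersinuv}, and conclude that restriction is the inverse. Your write-up is simply more explicit about the intermediate verifications (that $e$ lands in parameter ideals, that $c\circ e=\id$ via \cref{lem:contractionofidealinHSO3}, and that the preimage $J$ in the surjectivity step is itself a Steenrod closed parameter ideal), which the paper leaves implicit.
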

\begin{proof}
 The extension from $H^*(B\SO(3))$ to $H^*(BA_4)$ defines an injection on Steenrod closed parameter ideals by \cref{lem:injectiononparameterideals} and \cref{lem:steenrodclosedinextension}. It is surjective by \cref{cor:SteenrodclosedparameterIdeals_parametersinuv}. The inverse of this bijection is restriction.
\end{proof}

\section{\texorpdfstring{The $k$-invariants of a free $G$-action}{The k-invariants of a free G-action}}
 
In \cite[Theorem~2]{Oliver}, Oliver showed that $A_4$ can not act freely on any finite CW-complex $X$ with $H^* (X; \bbZ ) \cong H^* (S^n \times S^n; \bbZ)$. The proof proceeds in two steps. First, $A_4$ can not 
act freely on any finite-dimensional $X$ with mod-$2$ cohomology of a finite product of $n$-spheres
\[H^* (X; \bbF_2) \cong H^* (\prod _k S^n ; \bbF_2)\] 
and $A_4$ acting trivially 
on cohomology (see \cite[Theorem~1]{Oliver}). Secondly, a Lefschetz fixed point argument ensures that $A_4$ acts trivially on cohomology provided that $H^* (X; \bbZ ) \cong H^* (S^n \times S^n; \bbZ)$ and $X$ is finite. We observe that this assumption can be weakened to mod-$2$ coefficients. 

\begin{theorem}\label{thm:nofreeactionequidim}
There is no finite, free $A_4$-CW-complex $X$ with $H^*(X;\bbF_2)\cong H^*(S^n\times S^n;\bbF_2)$. 
\end{theorem}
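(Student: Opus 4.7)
The plan is to follow the two-step strategy indicated in the excerpt: first upgrade Oliver's trivial-action hypothesis from integral to mod-$2$ coefficients via a Lefschetz-number argument, and then invoke \cite[Theorem~1]{Oliver} to conclude. So let $X$ be a finite, free $A_4$-CW-complex with $H^*(X;\bbF_2)\cong H^*(S^n\times S^n;\bbF_2)$. The cohomology is concentrated in degrees $0, n, 2n$ with $H^0\cong H^{2n}\cong \bbF_2$ and $H^n\cong \bbF_2^2$, so the action on $H^0$ and $H^{2n}$ is automatically trivial, and the whole question reduces to the induced representation $\rho\colon A_4\to \GL(H^n)\cong \GL_2(\bbF_2)$.

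The key observation is that $\GL_2(\bbF_2)\cong S_3$, and the only normal subgroups of $A_4$ are $\{e\}$, $V_4$, and $A_4$; since $A_4/V_4\cong \bbZ/3$ and $A_4$ has no quotient of order $2$ or $6$, the image of $\rho$ is either trivial or cyclic of order $3$. I would then rule out the second possibility using the mod-$2$ Lefschetz formula. For a free $G$-CW-complex, every element $g\neq e$ permutes cells freely, so the trace of $g$ on each cellular chain module $C_i(X;\bbF_2)$ vanishes, and Hopf's theorem gives
\[
L(g;\bbF_2)\;=\;\sum_i \tr\!\bigl(g^*\mid H^i(X;\bbF_2)\bigr)\;=\;0.
\]
If the image of $\rho$ is $\bbZ/3$, pick $g\in A_4$ of order $3$ with $\rho(g)\neq 1$; then $\rho(g)$ is an element of order $3$ in $\GL_2(\bbF_2)$, whose characteristic polynomial is forced to be $t^2+t+1$, so $\tr(\rho(g))=1$. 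Therefore $L(g;\bbF_2)=1+1+1=1\pmod 2$, contradicting the computation above.

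Hence $\rho$ is trivial, i.e., $A_4$ acts trivially on $H^*(X;\bbF_2)$. At this point \cite[Theorem~1]{Oliver} applies directly: it forbids a free $A_4$-action on any finite-dimensional CW-complex with the mod-$2$ cohomology of a finite product of $n$-spheres and with trivial action on mod-$2$ cohomology. This yields the desired contradiction and proves the theorem.

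I do not expect any step to be a serious obstacle: the only point requiring a little care is justifying that $L(g;\bbF_2)=0$ in characteristic $2$ (where the Euler characteristic argument over $\bbQ$ cannot be imported naively), but the cellular-chain argument above, using the convention that a free $G$-action comes with a $G$-CW-structure, handles this cleanly without any torsion considerations.
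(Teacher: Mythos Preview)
Your proof is correct and follows essentially the same two-step strategy as the paper: both reduce to showing that the $A_4$-action on $H^n(X;\bbF_2)$ is trivial via a mod-$2$ Lefschetz argument for an element of order $3$, and then invoke \cite[Theorem~1]{Oliver}. The only cosmetic difference is that you compute $L(g;\bbF_2)=0$ directly from the free cellular chain complex, whereas the paper cites the mod-$2$ Lefschetz fixed point theorem to obtain a fixed point from $L(g;\bbF_2)\neq 0$; these are two sides of the same coin.
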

\begin{proof}
Suppose that $X$ is a finite, $A_4$-CW-complex with $H^*(X;\bbF_2)\cong H^*(S^n\times S^n;\bbF_2)$. By \cite[Theorem~1]{Oliver}, it suffices to show that if $A_4$ acts nontrivially on $H^{n}(X;\bbF_2)\cong \bbF_2\oplus \bbF_2$, then the $A_4$-action on $X$ is not free. Let $A_4\to \GL_2(2)\cong S_3$ be the representation of $A_4$ on $H^n(X;\bbF_2)$ and assume it is nontrivial. Since its kernel is a proper, normal subgroup of $A_4$, it follows that the kernel is $\bbZ/2\times\bbZ/2$ and that $C_3\subset A_4$ acts nontrivially on  $H^{n}(X;\bbF_2)$. Therefore, the trace of any generator of $C_3$ acting on $H^{n}(X;\bbF_2)$ is one. It follows from the Lefschetz fixed point theorem with mod-$2$ coefficients (see e.g. \cite[III.C Theorem~2]{brown1971}) that $C_3$ fixes a point of $X$. Hence $A_4$ does not act freely on $X$.
\end{proof}

In this section we consider free $A_4$-actions 
on a finite CW-complex $X$ which has the mod-$2$ cohomology of a product of two spheres $S^n \times S^m$ with $n\neq m$.
We first recall the general definitions and methods for studying free group actions on a finite-dimensional CW-complex. Let $G$ be a finite group and $X$ be a finite-dimensional $G$-CW-complex. The Borel construction for the $G$-CW-complex $X$ is a fibration 
$$X\rightarrow EG\times _G X \xrightarrow{\pi} BG$$ where $EG$ 
is the universal space for $G$, and $BG$ is the classifying space for $G$. 
Let $R$ be a commutative ring with unity. The
Serre spectral sequence associated to this fibration 
has $E_2$-page $$E_2^{s,t} =H^s (BG ; H^t (X; R))$$
and it converges to $H^* (EG\times_G X; R)$. 
The cohomology ring $H^*(EG\times _G X ; R)$ is called the 
equivariant Borel cohomology of $X$, denoted by $H^* _G (X; R)$.

If $G$ acts freely on $X$, then $EG \times _G X$ is homotopy equivalent 
to the orbit space $X/G$ which is a finite-dimensional CW-complex. This 
puts restrictions on the $E_{\infty}$-page of the Serre spectral sequence.
Since $H^n _G (X; R) \cong H^n (X/G; R)=0$ for $n>\dim X$,
we have $E_{\infty} ^{i,j}=0$ for $i+j > \dim X$.

The Serre spectral sequence is multiplicative and thus has a pairing  $$E^{s, t} _r \otimes E^{s', t'} _r \to E_r ^{s+s', t+t'} $$  
so that the differential $d_r$ is a derivation with respect to this product.  
 If for all $s,t \geq 0$, $H^s (BG; R)$ and $H^t (X; R)$ 
are free $R$-modules of finite type, and the $G$-action on the cohomology $H^*(X; R)$ is trivial, 
then 
$$E_2^{*,*} =H^* (BG ; H^* (X; R)) \cong H^* (X; R) \otimes H^* (BG; R)$$
as a bigraded algebra (see \cite[Proposition~5.6]{mccleary2001user}).
In fact, by the Universal coefficient theorem \cite[Theorem~5.5.10]{spanier1995algebraic} the isomorphism above holds  under the weaker assumption that $G$ acts trivially on $H^*(X; R)$ and
either $H^t(X;R)$ is a finitely generated, free $R$-module for each $t$, or $R$ is a field.
Note that here the isomorphism as bigraded algebras means that the product structure on $E_2 ^{*,*}$ 
coincides with the product induced by cup products on $H^* (X; R)$ and $H^* (BG; R)$. 
 
The product structure on the Serre spectral sequence gives an $E_r ^{*, 0}$-module
structure on $E_r ^{*, *}$. There is also a $H^* (BG; R)$-module
structure on $E_r ^{*, *}$ induced by the constant map $X\to pt$.
When $X$ is a connected space, there is a canonical isomorphism
$$E_2 ^{*, 0} \cong H^0 (X; R) \otimes H^* (BG; R)\cong H^* (BG; R).$$ 
In this case, the $H^* (BG; R)$-module structure on $E_2^{*,*}$ coincides with the $E_2 ^{*, 0}$-module structure described above.
The $H^* (BG, R)$-module structure is used for calculating differentials of arbitrary elements given the differentials
of the generators of the algebra $H^* (X; R)$; see for instance \cite{carlsson1980}. 
 
We will also use compatibility of two different Serre spectral sequences when 
there is a diagram of fibrations; in particular for the diagram 
$$
\xymatrix{ X \ar[r] & EG \times _G X \ar[r] & BG \\
X  \ar[r] \ar@{=}[u]  & EH \times _H X \ar[r] \ar[u] & BH\ar[u],}
$$
where $H$ is a subgroup of $G$.

Assume that $X$ is a finite-dimensional $G$-CW-complex with cohomology ring
$$H^* (X; \bbF_2) \cong H^* (S^n\times S^m ; \bbF_2) $$ for some $n, m$
satisfying $1\leq n < m$. In fact, with the exception of \cref{prop:generalizations1sn}, for all the results we obtain in this section 
we only use the fact that the mod-$2$ cohomology is isomorphic to $\bbF_2$ in degrees $0$, $n$, $m$, $n+m$ with $1\le n < m$ such that a generator in degree $m+n$ is the product of two generators in degrees $m$ and $n$. So we can also take this weaker cohomology condition as our assumption on $X$.

Since $\bbF_2$ has no nontrivial automorphisms and since $n < m$, the $G$-action on $H^* (X; \bbF_2)$ is trivial. Hence we have 
\[E_2 ^{s,t} \cong H^t (X; \bbF_2) \otimes H^s (BG; \bbF_2)\] with 
$E_2 ^{s,t}$ nonzero only at dimensions $t=0, n, m, n+m$. 
As it is often done, we identify $E_2 ^{0, *} \cong H^* (X; \bbF_2) \otimes H^0 (BG; \bbF_2)$
with $H^* (X; \bbF_2)$ and write elements of $E_2 ^{*, *}$ as a product $tx$ instead of a tensor product 
$t\otimes x$. If $t_1, t_2$ are generators 
for $H^* (X; \bbF_2)$ in dimensions $n$ and $m$, then
$E_2 ^{*,*}$ is a free $H^*(BG; \bbF_2)$-module with generators 
$1$, $t_1$, $t_2$ and $t_1t_2$. 

Since $n<m$, we have $d_i (t_1)=0$ for all $i\leq n$, hence $t_1$ is transgressive.
Let $d_{n+1} (t_1) =\mu_1 \in H^{n+1} (BG; \bbF_2).$ We call the cohomology 
class $\mu_1 \in H^{n+1} (BG; \bbF_2)$ the \emph{$k$-invariant for the 
sphere $S^n$}. If we also have $d_i (t_2)=0$ for $i \leq m$, then $t_2$
is also transgressive and there is a $\mu_2 \in H^{m+1} (BG; \bbF_2)$ such that 
$d_{m+1} (t_2)=\mu_2 \in H^{m+1} (BG; \bbF_2)$ 
modulo the ideal $\langle \mu_1 \rangle$. 
We call a choice of $\mu_2$ in $H^* (BG; \bbF_2)$ 
a \emph{$k$-invariant for the sphere $S^m$}. 

A graded $\bbF_2$-vector space $A^* = \bigoplus_{i\geq 0} A^i $ is called \emph{finite-dimensional} 
if there is a $d$ such that $A^i=0$ for all $i>d$.  
If $P$ is a subgroup of $G$ which acts freely on $X$, then 
$H^*_P (X; \bbF_2)\cong H^* (X/P; \bbF_2)$. Hence $H^i _P (X; \bbF_2)=0$ for $i>\dim X$ and 
$H^* _P (X; \bbF_2)$ is finite-dimensional as a graded $\bbF_2$-vector space.
 
\begin{lemma}\label{lem:mu1} Let $G$ be a finite group, and $X$ 
be a finite-dimensional $G$-CW-complex such that (as $\bbF_2$-algebras)
$$H^* (X; \bbF_2) \cong H^* (S^n\times S^m ; \bbF_2) $$ 
for some $n, m\geq 1$ satisfying $n<m$.
Assume that $G$ has a subgroup $P \cong \mathbb Z/2 \times \mathbb Z/2$ that acts 
freely on $X$. Then the $k$-invariant $\mu_1 \in H^{n+1} (BG; \bbF_2)$ 
for the sphere $S^n$ is not equal to zero.
\end{lemma}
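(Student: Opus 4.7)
The plan is to restrict the $G$-action to $P$ and derive a contradiction from $\mu_1=0$ using the Serre spectral sequence of the $P$-Borel fibration. First I would use naturality: the inclusion $P\hookrightarrow G$ induces a morphism of Borel fibrations, hence a morphism of Serre spectral sequences, and by compatibility of transgressions the $k$-invariant for the $P$-action on $t_1$, call it $\mu_1^P$, equals $\res_P^G(\mu_1)$. Hence the hypothesis $\mu_1=0$ forces $\mu_1^P=0$, so $t_1$ has vanishing transgression in the $P$-spectral sequence.

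Next I would exploit the finiteness of $H^*_P(X;\bbF_2)\cong H^*(X/P;\bbF_2)$. Since $P$ acts freely on the finite-dimensional $G$-CW-complex $X$, the orbit space is a finite-dimensional CW-complex. The equivariant cohomology is a finitely generated $H^*(BP;\bbF_2)=\bbF_2[a,b]$-module (because $H^*(X;\bbF_2)$ is four-dimensional) that vanishes in degrees exceeding $\dim X$, and hence is finite-dimensional as an $\bbF_2$-vector space. In particular the bottom row $E_\infty^{*,0}=\bbF_2[a,b]/I$ is finite-dimensional, where $I$ is the ideal generated by the images of the transgression differentials into the bottom row, so $I$ must be a parameter ideal of height two in $\bbF_2[a,b]$.

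The core of the argument is to show that $\mu_1^P=0$ forces $I$ to have at most one generator, contradicting the previous paragraph. The only possible contributions to $I$ come from transgressions on pages $n+1$, $m+1$, and $n+m+1$ of $t_1$, $t_2$, and $t_1t_2$ respectively. With $\mu_1^P=0$, the class $t_1$ becomes a permanent cycle and, when $t_2$ survives to page $m+1$, the Leibniz rule yields $d_{m+1}(t_1t_2)=t_1\cdot d_{m+1}(t_2)$. Since the row $n$ portion $t_1\cdot\bbF_2[a,b]$ remains intact up to page $m+1$ (no differentials out of it because $\mu_1^P=0$, and no differentials into it in this subcase), this class is nonzero whenever $d_{m+1}(t_2)\ne 0$, so $t_1t_2$ is killed at page $m+1$ and cannot transgress to the bottom row; hence at most one of $t_2$ and $t_1t_2$ contributes to $I$. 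The main subtlety is the case where $t_2$ is killed by an earlier differential $d_{m-n+1}(t_2)$ landing in row $n$, in which the Leibniz rule cannot directly compute $d_{m+1}(t_1t_2)$; but then $t_2$ contributes nothing and only $t_1t_2$ may contribute a single generator. In every case $I$ is principal or zero, so $\bbF_2[a,b]/I$ has Krull dimension at least one and is infinite-dimensional, giving the desired contradiction.
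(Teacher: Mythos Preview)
Your argument is correct and follows essentially the same route as the paper: restrict to $P$, use naturality to reduce to $\mu_1^P=\res_P^G(\mu_1)=0$, split into the cases $d_{m-n+1}(t_2)=0$ and $d_{m-n+1}(t_2)\neq 0$, and in each show that at most one transgression can hit the bottom row so that $E_\infty^{*,0}=\bbF_2[a,b]/I$ with $I$ principal (or zero), contradicting finite-dimensionality of $H^*_P(X;\bbF_2)$. The paper's proof uses the identical case split, the same Leibniz computations for $d_{m+1}(t_1t_2)$, and the same Krull-dimension contradiction; the only cosmetic difference is that in the second case the paper also observes directly that row $m$ is wiped out by the injective differential $d_{m-n+1}$ (since $\bbF_2[a,b]$ is a domain), while you phrase it as ``$t_2$ contributes nothing.''

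One small point worth tightening (the paper is equally informal here): in the case $d_{m-n+1}(t_2)\neq 0$ you note that Leibniz cannot compute $d_{m+1}(t_1t_2)$, yet still assert that row $n+m$ contributes ``a single generator'' to $I$. Strictly, if $d_{m+1}(t_1t_2)\neq 0$ then $t_1t_2$ itself does not transgress, and one must check that the surviving submodule $E_{n+m+1}^{*,n+m}\subset t_1t_2\cdot\bbF_2[a,b]$ is still cyclic; this follows because $\bbF_2[a,b]$ is a UFD (the kernel of multiplication by any $\overline{\delta}$ on $\bbF_2[a,b]/\langle\gamma\rangle$ is principal). With that remark your argument is complete.
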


\begin{proof}
 Let $P \cong \bbZ /2\times \bbZ/ 2$ be a subgroup of $G$ that acts freely on $X$. 
Consider the spectral sequence
$$E_2^{i,j} =H^i (BP ; H^j (X; \bbF_2)) \Rightarrow H^*_P (X; \bbF_2)$$ 
for the $P$-action on $X$. Since the $P$-action on $X$ is free, $H^*_P (X ; \bbF_2)$ is finite-dimensional 
as a graded $\bbF_2$-vector space. We denote the differentials of this spectral sequence by $d'_i$.
Assume that $\mu_1=0$. Then by the compatibility of the Serre spectral sequences, we have
$$d'_{n+1} (t_1) =\Res _P ^G  ( d_{n+1} (t_1) )=\Res^G _P \mu _1=0.$$

Note that $H^* (BP ; \bbF_2) \cong \bbF_2 [a, b]$ with $|a|=|b|=1$. In particular, the cohomology
ring $H^* (BP; \bbF_2)$ has no zero divisors.
Assume that $d'_{m-n+1} (t_2)\neq 0$. Then for every nonzero cohomology class $\alpha \in H^* (BP; \bbF_2)$, 
we have $$d' _{m-n+1} (t_2 \alpha) =d' _{m-n+1} (t_2 ) \alpha \neq 0.$$  
Hence the horizontal line $E_2 ^{*, m}$ does not survive to the $E_{m+1}$-page. In this case the
only differentials that can hit the bottom line have to come from the top line. 
But  $E_2 ^{*, 0}$
has Krull dimension $2$, which gives that 
$E_{\infty } ^{*, 0} =E_2 ^{*, 0}/\langle d_{n+m+1} (t_1t_2)\rangle$ has Krull dimension at least $1$.
This is in contradiction with the fact that $E_{\infty} ^{*, 0}$ is finite-dimensional. 
So we must have $d'_{m-n+1}(t_2)=0$. 

Since $d'_{m-n+1} ( t_2)=0$, $t_2$ survives to $E_{n+1}$, and $d'_{n+1} (t_2)=0$ because $E_{n+1}^{*,m-n}=0$ if $m\neq 2n$ and $d'_{n+1}(t_2)=d'_{m-n+1}(t_2)=0$ if $m=2n$.
By the product structure on the spectral sequence, 
this gives that 
$$d'_{n+1} (t_1 t_2 ) =d'_{n+1} (t_1) t_2 + t_1 d'_{n+1} (t_2) =0$$
since both $d'_{n+1} (t_1)$ and $d'_{n+1} (t_2)$ are zero. 
Let  $\mu_2'\coloneqq d'_{m+1} (t_2)$ denote the $k$-invariant for the sphere $S^m$. 
If $\mu_2'=0$, then the horizontal line at $j=m$ 
will survive to the $E_{\infty}$-page which gives a contradiction because 
$H^*_P(X ; \bbF_2)$ is finite-dimensional. Hence $\mu_2'=d'_{m+1}(t_2) \neq 0$. 
This gives that $$d'_{m+1} (t_1t_2)= t_1 \mu_2' \neq 0.$$
Hence the top horizontal line does not survive to the $E_{\infty}$-page, therefore 
there is no other differential hitting the bottom line. We conclude that
$$E^{*, 0} _{\infty} \cong H^* (BP; \bbF_2)/ \langle \mu'_2\rangle.$$ 
This again gives a contradiction because 
the Krull dimension of $H^*(BP; \bbF_2)$ is 2, hence $E^{*, 0} _{\infty}$ is not 
finite-dimensional.
\end{proof}

\begin{lemma}\label{lem:mu2} Let $G$ and $X$ be as in \cref{lem:mu1}. Suppose that $\mu_1$ is a non-zero divisor in $H^* (BG; \bbF_2)$. 
Let $S=H^* (BG, \bbF_2)/\langle \mu _1 \rangle $ denote the quotient ring. 
Then $t_2 \in H^m (X; \bbF_2)$ is transgressive and there is a short exact sequence 
$$ 0 \to (S/\mu_2 S )_i \to H^i_G (X ; \bbF_2) \to (\Ann _S (\mu_2) ) _{i-m} \to 0$$
where the first map is induced by $\pi^*\colon H^* (BG; \bbF_2) \to H^*_G (X; \bbF_2)$.
\end{lemma}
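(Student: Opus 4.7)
I would analyze the Serre spectral sequence
\[E_2^{s,t} = H^s(BG;\bbF_2) \otimes H^t(X;\bbF_2) \Rightarrow H^{s+t}_G(X;\bbF_2),\]
exploiting its $H^*(BG;\bbF_2)$-module structure and the Leibniz rule for its differentials. Since $H^*(X;\bbF_2)$ vanishes outside rows $\{0,n,m,n+m\}$, the $E_2$-page is a free $H^*(BG;\bbF_2)$-module on $1$, $t_1$, $t_2$, $t_1t_2$, and we already know $d_{n+1}(t_1)=\mu_1$.

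The key step is transgressivity of $t_2$. Since $d_r(t_2)\in E_r^{r,m-r+1}$ for $r\le m$ must land in one of the four nonzero rows, the only potentially nonzero such differential is $d_{m-n+1}(t_2)$, targeting row $n$. For $m\ge 2n+1$ this target is automatically zero, because row $n$ is annihilated at $E_{n+2}$: the differential $\beta t_1\mapsto \beta\mu_1$ is injective since $\mu_1$ is a non-zero divisor. For $m\le 2n$, suppose $d_{m-n+1}(t_2)=\alpha t_1$ with $\alpha\in H^{m-n+1}(BG;\bbF_2)$. As a boundary, $\alpha t_1$ vanishes at $E_{m-n+2}^{m-n+1,n}$, and since no further differential affects this position at pages in $[m-n+2,n+1]$, it still vanishes at $E_{n+1}^{m-n+1,n}$. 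Applying the Leibniz rule gives $d_{n+1}(\alpha t_1)=\alpha\mu_1 \in E_{n+1}^{m+2,0}$, a target position that is likewise untouched by earlier differentials, so $E_{n+1}^{m+2,0}=H^{m+2}(BG;\bbF_2)$. Because $\alpha t_1=0$ at $E_{n+1}$, we deduce $\alpha\mu_1=0$ in $H^{m+2}(BG;\bbF_2)$, and the non-zero divisor hypothesis forces $\alpha=0$. Hence $t_2$ is transgressive, and we set $\mu_2\coloneqq d_{m+1}(t_2)$.

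With transgression in hand, the Leibniz rule determines the remaining nontrivial differentials: $d_{n+1}(\beta t_1)=\beta\mu_1$, $d_{n+1}(\gamma t_1 t_2)=\gamma\mu_1 t_2$, and $d_{m+1}(\delta t_2)=\delta\mu_2$. The non-zero divisor property makes both $d_{n+1}$'s injective, so rows $n$ and $n+m$ vanish at $E_{n+2}$, while row $0$ becomes $S$ and row $m$ becomes $S\cdot t_2$. No differential acts between $E_{n+2}$ and $E_{m+1}$, and then $d_{m+1}$ is multiplication by $\mu_2$, yielding $E_\infty^{*,0}\cong S/\mu_2 S$ and $E_\infty^{*,m}\cong \Ann_S(\mu_2)\cdot t_2$, after which the spectral sequence collapses.

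The filtration of $H^i_G(X;\bbF_2)$ therefore has only two nontrivial subquotients in total degree $i$: the sub-module $E_\infty^{i,0}\cong (S/\mu_2 S)_i$, included via the edge homomorphism from row $0$, which is exactly the map induced by $\pi^*$ (it factors through $S$ since $\pi^*(\mu_1)=0$, and further through $S/\mu_2 S$ since $\pi^*(\mu_2)=0$), and the top quotient $E_\infty^{i-m,m}\cong (\Ann_S(\mu_2))_{i-m}$. This produces the asserted short exact sequence. The crucial technical device is the transgression argument, which converts the boundary relation $\alpha t_1 = 0$ in row $n$ into a multiplicative relation $\alpha\mu_1=0$ in row $0$, at which point the non-zero divisor hypothesis can be applied; the remaining computation of $E_\infty$ and the filtration extraction are then a straightforward consequence of the four-row structure of the $E_2$-page.
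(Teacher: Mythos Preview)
Your proof is correct and follows essentially the same approach as the paper's: both arguments split the transgression check into the cases $m>2n$ (row $n$ is already killed by the injective $d_{n+1}$) and $m\le 2n$ (the boundary $\alpha t_1$ is zero on later pages, so the Leibniz rule forces $\alpha\mu_1=0$), and then read off the two-line $E_\infty$-page and the filtration short exact sequence. Your write-up is somewhat more explicit about the edge homomorphism and about why $E_{n+1}^{*,0}$ is still all of $H^*(BG;\bbF_2)$, but the underlying argument is the same as the paper's.
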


\begin{proof}  
Consider the case where $m\leq 2n$. Write $d_{m-n+1} (t_2)$ in the form $t_1 \gamma$ for some  $\gamma \in H^* (BG; \bbF_2)$. Then $d_{n+1} (t_1 \gamma)=0$ because $t_1\gamma$ is 
in the image of $d_{m-n+1}$. This gives that 
$0=d_{n+1} (t_1 \gamma) = \mu_1\gamma$.
Since $\mu_1$ is a non-zero divisor we get $\gamma=0$, hence $d_{m-n+1} (t_2)=0$.
Now consider the case where $m >2n$. In this case $d_{m-n+1}(t_2)$ lies in the kernel of the map $d_{n+1}\colon E_{n+1} ^{*, n} \to E_{n+1} ^{*, 0}$, which is defined by multiplication with $\mu_1$.
Since $\mu_1$ is a non-zero divisor, we have $\ker d_{n+1}=0$, hence $d_{m-n+1} (t_2)=0$.
Therefore $t_2$ survives to $E_{m+1}$, i.e., it is transgressive in both cases.

By the multiplicative structure of the spectral sequence, we have $d_{n+1} (t_1t_2)= t_2 \mu_1 \neq 0$. 
This gives that $E_{m+1}$ only has two nonzero lines which are at $j=0$ and $j=m$ and both of them are 
isomorphic to $S$. 
The differential between these two nonzero lines is defined by the multiplication 
with $d_{m+1} (t_2) =\mu_2$. Hence we have $E_{\infty} ^{*, 0} \cong S/\mu_2S$ and 
$E_{\infty} ^{*, m} \cong \Ann _S (\mu_2)$. Since the $E_{\infty}$-page has 
only two nonzero horizontal lines at $j=0$ and $j=m$, for every $i \geq 0$, 
there is a short exact sequence $$0 \to E ^{i, 0} _{\infty} \to H^i _G (X; \bbF_2 ) \to E_{\infty} ^{i-m, m} \to 0.$$
Combining with the above observations, this gives the short exact sequence given in the lemma.
\end{proof}

In the rest of the section we assume $G=A_4$. By \cref{thm:AdemMilgram},
the mod-$2$ cohomology algebra of $A_4$ is 
$$H^*(BA_4; \bbF_2) \cong H^*(B(\bbZ/2)^2; \bbF_2)^{C_3}\cong \bbF_2 [u,v,w]/\langle u^3+v^2+vw+w^2\rangle,$$
where $\deg (u)=2$ and $\deg(v)=\deg(w)=3$. From the first isomorphism it is clear
that $H^*(BA_4, \bbF_2)$ has no zero divisors. 
 
\begin{proposition}\label{pro:ForA_4F_2} Let $G=A_4$ and $X$ be a finite, free $G$-CW-complex such that 
$$H^* (X;\bbF_2) \cong H^* (S^n\times S^m ; \bbF_2)$$ for some $n, m\geq 1$ with $n<m$.
Then the following holds:
\begin{enumerate}
\item The homomorphism $$\pi^*\colon H^* (BG; \bbF_2) \to H^* _{G} (X; \bbF_2)$$ is surjective and 
its kernel is the ideal $J\coloneqq\langle \mu_1, \mu_2 \rangle $ generated by the $k$-invariants $\mu_1$ and $\mu_2$.
\item $H^* (BG, \bbF_2)/J$ is finite over $\bbF_2$.
\item The ideal $J$ is closed under Steenrod operations.
\end{enumerate}
\end{proposition}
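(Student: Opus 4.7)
The plan is to combine Lemmas~\ref{lem:mu1} and~\ref{lem:mu2} with the Cohen-Macaulay property of $H^*(BA_4;\bbF_2)$, and then exploit naturality of the Steenrod operations. First, since $G=A_4$ contains a subgroup $P\cong (\bbZ/2)^2$ and $P$ acts freely on $X$ (because $G$ does), Lemma~\ref{lem:mu1} gives $\mu_1\neq 0$. As $H^*(BA_4;\bbF_2)$ is an integral domain, $\mu_1$ is a non-zero divisor, so the hypothesis of Lemma~\ref{lem:mu2} is satisfied. That lemma then produces a $k$-invariant $\mu_2\in H^{m+1}(BG;\bbF_2)$ and, writing $S=H^*(BG;\bbF_2)/\langle \mu_1\rangle$, a short exact sequence
\[
0\to (S/\mu_2 S)_i \to H^i_G(X;\bbF_2)\to (\Ann_S(\mu_2))_{i-m}\to 0,
\]
in which the left-hand map is induced by $\pi^*$.

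Next I would exploit finiteness. Since $X$ is a finite CW-complex and $G$ acts freely, $X/G$ is a finite CW-complex, so $H^*_G(X;\bbF_2)\cong H^*(X/G;\bbF_2)$ is a finite-dimensional $\bbF_2$-vector space. The short exact sequence then forces $S/\mu_2 S$ to be finite-dimensional, which establishes~(2) and in particular shows $\mu_2\notin \langle\mu_1\rangle$, so $\{\mu_1,\mu_2\}$ is a system of parameters in $H^*(BG;\bbF_2)$. Since $H^*(BA_4;\bbF_2)$ is Cohen-Macaulay of Krull dimension $2$ by the Cohen-Macaulay lemma of Section~\ref{sect:invariantTheory}, this system of parameters is a regular sequence, so $\mu_2$ is a non-zero divisor in $S$ and $\Ann_S(\mu_2)=0$. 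The short exact sequence collapses to an isomorphism $H^*_G(X;\bbF_2)\cong H^*(BG;\bbF_2)/\langle \mu_1,\mu_2\rangle$, giving surjectivity of $\pi^*$ with kernel exactly $J$, which proves~(1).

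For~(3), the map $\pi^*$ is induced by the continuous map $EG\times_G X\to BG$ and therefore commutes with the Steenrod operations by naturality; hence $J=\ker\pi^*$ is closed under Steenrod operations. Essentially all the work is bundled into Lemmas~\ref{lem:mu1} and~\ref{lem:mu2}; the remaining step is the routine Cohen-Macaulay bookkeeping used to kill $\Ann_S(\mu_2)$, and this is the only place where I expect any friction.
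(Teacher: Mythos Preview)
Your proof is correct and follows essentially the same route as the paper: invoke Lemmas~\ref{lem:mu1} and~\ref{lem:mu2}, use finiteness of $X/G$ to get finiteness of $S/\mu_2 S$, use the Cohen--Macaulay property to kill $\Ann_S(\mu_2)$, and then read off surjectivity, the kernel, and Steenrod closedness. The paper cites \cref{lem:Coprime} rather than the Cohen--Macaulay lemma directly for the regular-sequence step, but this is the same argument.
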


\begin{proof}   
Consider the Serre spectral sequence in mod-$2$ coefficients. By \cref{lem:mu1}, the $k$-invariant $\mu_1$ is nonzero.
The ring $H^*(BG; \bbF_2)$ has no zero-divisors, hence we can apply \cref{lem:mu2} to conclude that 
the generator $t_2$ is transgressive.
So the $k$-invariant $\mu_2$ is defined and as in the proof of \cref{lem:mu2}, the $E_{m+1}$-page 
has only two nonzero lines at $j=0$ and $j=m$.
Since $S/\mu_2S= H^* (BG; R) /J$ where $J=\langle \mu_1, \mu_2 \rangle$, 
we conclude that
the kernel of $\pi^*$ is the ideal $J$. The cohomology ring $H_G ^* (X; \bbF_2)$ is
finite-dimensional. This gives that $H^* (BG; \bbF_2)/J$ is finite-dimensional, hence finite. 
The homomorphism $\pi^*$ is induced by a continuous map between 
topological spaces, thus its kernel $J$ is closed under Steenrod operations. 

Since $H^* (BG; \bbF_2)/J$ 
is finite, we conclude that $\mu_1, \mu_2$ is a regular sequence by \cref{lem:Coprime}. Thus $\mu_2$ is a non-zero divisor in $S$.
This gives that $ (\Ann _S (\mu_2) ) _{i-m}=0$ for $S=H^* (BG; \bbF_2)/\langle \mu_1\rangle$. 
Thus $E^{i,j} _{\infty}=0$ for all $j>0$. Hence $\pi^*$ is surjective.
\end{proof}

\begin{remark} \cref{pro:ForA_4F_2} and the following theorem hold more generally for $X$ with four-dimensional cohomology; see \cite[Remark~4.6]{ruepingstephan2024}. Moreover, the assertion from \eqref{lem:mu1} and \eqref{lem:mu2} that $J$ is a parameter ideal follows from the algebraic result \cite[Proposition~4.3]{ruepingstephan2024}.
\end{remark}

We obtain the following:

\begin{theorem}\label{thm:ObstructionskInvariants}  
Let $G=A_4$ and $X$ be a finite, free $G$-CW-complex such that
$$H^* (X;\bbF_2) \cong H^* (S^n\times S^m ; \bbF_2)$$ for some $n, m\geq 1$ with $n<m$.
The ideal $J$ generated by
the $k$-invariants is equal to one of the ideals listed in \cref{thm:Intro-classification_steenrodclosedparameterideals}.    
\end{theorem}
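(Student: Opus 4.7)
The proof is essentially an assembly of results already established. The plan is to verify that the ideal $J = \langle \mu_1, \mu_2 \rangle$ satisfies the three hypotheses required to invoke the classification theorem \ref{thm:Intro-classification_steenrodclosedparameterideals}, namely that it is (i) an ideal of $H^*(BA_4;\bbF_2)$, (ii) a parameter ideal, and (iii) closed under Steenrod operations.

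First, I would invoke Proposition \ref{pro:ForA_4F_2} applied to the $G=A_4$ action on $X$. This immediately gives two of the three needed facts: the ideal $J = \langle \mu_1, \mu_2 \rangle$ is Steenrod closed, and the quotient ring $H^*(BA_4;\bbF_2)/J$ is finite over $\bbF_2$. Note that the hypotheses of \cref{pro:ForA_4F_2} are exactly the hypotheses of the present theorem.

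Next, I need to confirm that $J$ is a parameter ideal in the sense of \cref{def:ParameterIdeal}. Recall from the discussion after \cref{def:ParameterIdeal} that $H^*(BA_4;\bbF_2)$ has Krull dimension $2$. Since $\mu_1 \in H^{n+1}(BA_4;\bbF_2)$ and $\mu_2 \in H^{m+1}(BA_4;\bbF_2)$ are homogeneous elements of positive degree (as $n,m \geq 1$), and since $H^*(BA_4;\bbF_2)/J$ is finite over $\bbF_2$, the pair $\{\mu_1,\mu_2\}$ is a homogeneous system of parameters and hence $J$ is a parameter ideal.

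Finally, applying the classification in \cref{thm:Intro-classification_steenrodclosedparameterideals} concludes the proof: every Steenrod closed parameter ideal in $H^*(BA_4;\bbF_2)$ is either fibered, twisted, or mixed as listed there. There is no real obstacle; all the work has been done in the preceding sections.
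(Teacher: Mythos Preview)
Your proof is correct and follows essentially the same approach as the paper: invoke \cref{pro:ForA_4F_2} to see that $J$ is a Steenrod closed parameter ideal, then apply the classification \cref{thm:Intro-classification_steenrodclosedparameterideals}. Your version is slightly more explicit in spelling out why finiteness of the quotient makes $J$ a parameter ideal, but otherwise the arguments coincide.
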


\begin{proof}
By \cref{pro:ForA_4F_2}, the ideal $J$ is a parameter ideal closed 
under Steenrod operations. Hence the result follows from \cref{thm:Intro-classification_steenrodclosedparameterideals}.
\end{proof}

When the $G$-CW-complex $X$ has the integral cohomology of a product of two spheres, the 
possibilities for the ideal $J$ are more restricted. To see this we use the commutativity of Steenrod
operations with transgressions.
 
 \begin{proposition}\label{pro:Steenrod} Let $G=A_4$ and $X$ be a finite, free $G$-CW-complex
such that $$H^* (X;\bbF_2) \cong H^* (S^n\times S^m ; \bbF_2)$$ for some $n, m\geq 1$ with $n<m$.
Let $t_1$ and $t_2$ denote the generators of
$H^* (X ; \bbF_2)$ in dimensions $n$ and $m$, and let $\mu_1$ and 
$\mu_2$ in $H^*(BA_4; \bbF_2)$ be the $k$-invariants for the $A_4$-action on $X$.  
If $\Sq^{m-n} (t_1)=t_2 \lambda$, then  $\Sq^{m-n} (\mu_1)=\mu_2 \lambda$ modulo $\langle \mu_1 \rangle$.
\end{proposition}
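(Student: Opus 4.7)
The plan is to invoke Kudo's transgression theorem in the mod-$2$ Serre spectral sequence of the Borel fibration $X \to EG\times_G X \to BG$, which says that the Steenrod squares commute with transgressions.

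First, by the definition of $\mu_1$, the class $t_1 \in H^n(X;\bbF_2)$ is transgressive with $d_{n+1}(t_1)=\mu_1$. Since $G$ acts trivially on $H^*(X;\bbF_2)$ (each of the relevant groups being one-dimensional over $\bbF_2$), Kudo's theorem (see e.g.\ \cite{mccleary2001user}) applies and yields that $\Sq^{m-n}(t_1)$ is again transgressive, with transgression $\Sq^{m-n}(\mu_1)$ viewed in $E_{m+1}^{m+1,0}$. The only differentials reaching the bottom row before $d_{m+1}$ are the $d_{n+1}$-differentials emanating from the row $t=n$, and their images lie in $\mu_1 \cdot H^{m-n}(BG;\bbF_2) \subseteq \langle\mu_1\rangle$; hence $\Sq^{m-n}(\mu_1)$ is a well-defined element of $H^{m+1}(BG;\bbF_2)/\langle\mu_1\rangle$.

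Next, I evaluate the same transgression using the given identity $\Sq^{m-n}(t_1)=t_2\lambda$. Since $|\Sq^{m-n}(t_1)|=m=|t_2|$, the coefficient $\lambda$ has degree zero and therefore lies in $\bbF_2$. By \cref{lem:mu2}, the class $t_2$ is itself transgressive with $d_{m+1}(t_2)=\mu_2$ modulo $\langle \mu_1\rangle$, so multiplying by the constant $\lambda$ gives $d_{m+1}(\lambda t_2)=\lambda \mu_2$ modulo $\langle \mu_1\rangle$.

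Finally, equating the two calculations of $d_{m+1}(\Sq^{m-n}(t_1))$ yields $\Sq^{m-n}(\mu_1)\equiv \lambda\mu_2 \pmod{\langle\mu_1\rangle}$, which is the desired conclusion. The main obstacle I expect is the careful bookkeeping of the indeterminacy in the transgression target; everything else reduces to the naturality and multiplicative behaviour of the spectral sequence together with Kudo's theorem.
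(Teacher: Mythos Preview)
Your proposal is correct and follows essentially the same approach as the paper: both invoke the compatibility of Steenrod operations with transgressions in the Serre spectral sequence (the paper cites \cite[Corollary~6.9]{mccleary2001user}, which is Kudo's theorem) and then equate $d_{m+1}(\Sq^{m-n}(t_1))$ computed two ways. Your version is slightly more explicit in noting that $\lambda\in\bbF_2$ for degree reasons and in tracking the indeterminacy modulo $\langle\mu_1\rangle$, and you make the appeal to \cref{lem:mu2} for the transgressiveness of $t_2$ explicit, but these are exactly the ingredients the paper's short proof relies on.
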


\begin{proof} 
By \cite[Corollary~6.9]{mccleary2001user}, the transgressions in the Serre spectral sequence  
commute with the Steenrod operations. This means that if $x \in H^n (X ; \bbF_2)$ 
survives to the $E_{n+1}$-page, then for every $i\geq 0$, $\Sq^i (x)$ survives to the $E_{n+i+1}$-page
and the equality $$d_{n+i+1} ( \Sq^i (x))=\Sq^i (d_{n+1} (x))$$ holds on the $E_{n+i+1}$-page. 
Using this, we can conclude that if $\Sq^{m-n} (t_1)=t_2 \lambda$, then  
$$
\Sq^{m-n} (\mu_1) =\Sq ^{m-n} ( d_{n+1} (t_1))= d_{m+1} ( \Sq ^{m-n} (t_1) )= d_{m+1} ( t_2 \lambda) \\
=\mu_2 \lambda$$
modulo $\langle \mu_1 \rangle$. This completes the proof. 
\end{proof} 

As a consequence we obtain the following theorem. 
 
\begin{theorem}\label{thm:ObstructionskInvariants_integral} 
Let $G=A_4$ and $X$ be a finite, free $G$-CW-complex such that
$$H^* (X;\bbZ) \cong H^* (S^n\times S^m ; \bbZ)$$ for some $n, m\geq 1$ with $n<m$.
Let $J$ be the ideal generated by
the $k$-invariants such that
$H^* (X/G ; \bbF_2) \cong H^* (BA_4; \bbF_2) /J$.
Then the ideal $J$ is equal to one of the ideals listed in \cref{thm:Intro-classification_steenrodclosedparameterideals}
except the twisted ones listed in \eqref{item:twisted}.
\end{theorem}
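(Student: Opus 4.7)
The plan is to derive this refinement of Theorem \ref{thm:ObstructionskInvariants} by ruling out the twisted ideals using the fact that $\Sq^1$ vanishes on the mod-$2$ reduction of an integral cohomology class, together with Proposition \ref{pro:Steenrod}. By Theorem \ref{thm:ObstructionskInvariants}, the ideal $J$ generated by the $k$-invariants is one of the three families listed in Theorem \ref{thm:Intro-classification_steenrodclosedparameterideals}, so it suffices to show that $J$ cannot be a twisted ideal.

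Suppose for contradiction that $J = \langle x_n, \Sq^1(x_n)\rangle$ is twisted for some $n\geq 2$. Since $|x_n| = 2^{n+1}-2$ and $|\Sq^1(x_n)| = 2^{n+1}-1$ differ by $1$, and since by Lemma \ref{lem:lowergeneratorunique} the parameter of smaller degree is determined by $J$, we must have $\mu_1 = x_n$ and $m+1 = (n+1)+1$, i.e., $m-n = 1$. The class $\mu_2$ is only defined modulo $\langle\mu_1\rangle$, but any valid choice of $\mu_2$ completing $\mu_1 = x_n$ to a generating set of $J$ satisfies $\mu_2 \equiv \Sq^1(\mu_1) \pmod{\mu_1}$ (since any other class of the correct degree lying in $J$ has the form $\Sq^1(x_n) + \beta x_n$).

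Since $X$ has integral cohomology of $S^n\times S^m$, the class $t_1 \in H^n(X;\bbF_2)$ is the mod-$2$ reduction of an integral generator of $H^n(X;\bbZ)$. Writing $\Sq^1$ as the composite $\rho\circ\beta$ of the integral Bockstein followed by reduction mod~$2$, the class $\beta(t_1) \in H^{n+1}(X;\bbZ)$ vanishes because $t_1$ lifts integrally; hence $\Sq^1(t_1) = 0$. In particular, the unique scalar $\lambda\in\bbF_2$ with $\Sq^{m-n}(t_1) = \Sq^1(t_1) = \lambda\cdot t_2$ is $\lambda = 0$. By Proposition \ref{pro:Steenrod}, we conclude that
\[\Sq^1(\mu_1) \equiv \lambda\cdot\mu_2 \equiv 0 \pmod{\langle\mu_1\rangle}.\]
Combining this with $\mu_2 \equiv \Sq^1(\mu_1)\pmod{\mu_1}$ forces $\mu_2\in\langle\mu_1\rangle$, contradicting the fact that $\{\mu_1,\mu_2\}$ is a regular sequence by Lemma \ref{lem:Coprime}. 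Therefore $J$ cannot be twisted, which completes the proof.

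The only subtle point is the identification $\mu_2 \equiv \Sq^1(\mu_1)\pmod{\mu_1}$ in the twisted case, which requires comparing the coset representative of $\mu_2$ coming from the Serre spectral sequence with the canonical generator $\Sq^1(x_n)$ of the ideal; the rest of the argument is a direct consequence of the commutation of transgressions with Steenrod operations.
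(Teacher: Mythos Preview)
Your proof is correct and follows essentially the same approach as the paper: both use that $t_1$ lifts integrally to deduce $\Sq^1(t_1)=0$, then invoke the commutation of Steenrod operations with transgression (Proposition~\ref{pro:Steenrod}) to obtain $\Sq^1(\mu_1)\equiv 0\pmod{\langle\mu_1\rangle}$, which is incompatible with $J$ being twisted. The only cosmetic difference is that the paper establishes this congruence directly and then observes it excludes twisted ideals, whereas you first assume $J$ is twisted to pin down $m-n=1$ and thereby apply Proposition~\ref{pro:Steenrod} verbatim with $\Sq^{m-n}=\Sq^1$; as a presentational note, you overload the symbol $n$ for both the sphere dimension and the index of $x_n$, which should be disambiguated.
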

 
\begin{proof} By the universal coefficient theorem for cohomology, the cohomology ring $H^* (X; \bbF_2)$
satisfies the conditions of \cref{thm:ObstructionskInvariants}, so we have   $H^* (X/G ; \bbF_2) \cong H^* (BG; \bbF_2) /J$,
where the ideal $J$ must be equal to one of the ideals listed in \cref{thm:Intro-classification_steenrodclosedparameterideals}.  
Consider the homomorphism  $m_2^*\colon H^* (X; \bbZ) \to H^* (X; \bbF_2)$ induced by the mod-$2$ reduction map $m_2\colon \bbZ \to \bbF_2$. 
Since the generator $t_1$ is in the 
image of the mod-$2$ reduction map, $\beta_0 (t_1)=0$ where $\beta_0$ is the connecting homomorphism 
for $0 \to \mathbb Z \to \mathbb Z \to \bbF_2\to 0$. The Bockstein homomorphism is $\beta =m_2^* \circ \beta_0$, 
hence we have $\beta (t_1)=0$.
Since $\beta (t_1)=\Sq^1 (t_1)$, we can apply \cref{pro:Steenrod} and conclude that $\Sq ^1( \mu_1)=0$ mod $\langle \mu_1 \rangle$.
This shows that the ideal $J=\langle \mu_1, \mu _2 \rangle$ can not be twisted.
\end{proof} 

Removing the degrees of the parameters for the twisted ideals from \cref{cor:degreesofparameters} yields the list in the following result.
\begin{corollary}\label{cor:DimensionsFreeAction}
Let $G=A_4$ and $X$ be a finite, free $G$-CW-complex such that $$H^* (X;\bbZ) \cong H^* (S^n\times S^m ; \bbZ)$$ for some $n, m\geq 1$.

Then the unordered pair $(n+1,m+1)$ must be one of the following: 
\begin{enumerate}
\item $(3k,2l)$ where $k$ is not larger than the highest power of $2$ dividing $l$, or
\item $(3i+2^{s+r+1}-2^{s+1},2^{s+r+1}-2^{s})$ for $s\geq 1$, $r\geq 1$ and $0\leq i <2^{s-1}$.
\end{enumerate}
\end{corollary}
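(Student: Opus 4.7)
The plan is to combine the integral obstruction from \cref{thm:ObstructionskInvariants_integral} with the degree enumeration in \cref{cor:degreesofparameters}, being careful to exclude the twisted family which is incompatible with the integral cohomology hypothesis.

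First I would dispose of the degenerate case $n=m$: the universal coefficient theorem converts the integral isomorphism $H^*(X;\bbZ)\cong H^*(S^n\times S^n;\bbZ)$ into the mod-$2$ isomorphism $H^*(X;\bbF_2)\cong H^*(S^n\times S^n;\bbF_2)$, and then \cref{thm:nofreeactionequidim} rules out any finite free $A_4$-CW-complex of this form. So I may assume $n\neq m$, and after relabeling, $n<m$.

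Next I would apply \cref{thm:ObstructionskInvariants_integral}: the kernel $J$ of the map $H^*(BA_4;\bbF_2)\to H^*(X/A_4;\bbF_2)$ is a Steenrod closed parameter ideal of either fibered or mixed type (but not twisted), and is generated by the two $k$-invariants, which by construction lie in degrees $n+1$ and $m+1$ of $H^*(BA_4;\bbF_2)$. Hence the unordered pair $(n+1,m+1)$ coincides with the unordered pair of degrees of a homogeneous system of parameters for $J$.

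Finally I would appeal to \cref{cor:degreesofparameters}, which lists all possible unordered parameter-degree pairs for Steenrod closed parameter ideals in $H^*(BA_4)$. Its family (1) is precisely the fibered case and reproduces the first family of the claim verbatim. Its family (2), indexed by $m\geq 0$, $n\geq 1$ and $0\leq i<2^{m-1}$, accounts for the twisted case at $m=0$ (where $i=0$ is forced and one obtains the pair of consecutive integers $(2^{n+1}-2,\,2^{n+1}-1)$) and for the mixed case when $m\geq 1$. Discarding the twisted family thus amounts to imposing $m\geq 1$, and after relabeling $(m,n)$ as $(s,r)$, this recovers the second family of the claim. The only hurdle is notational: one must check that the single restriction $m\geq 1$ in \cref{cor:degreesofparameters} removes precisely the twisted ideals and no others, which is immediate from inspecting the degrees of $x_n$ and $y_n$ in \cref{def:TwistedPairsSequence}.
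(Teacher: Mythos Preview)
Your argument is correct and follows the same route as the paper, which simply says that removing the twisted degrees from \cref{cor:degreesofparameters} yields the stated list. The one point where you are more careful than the paper is in explicitly disposing of the equal-dimension case $n=m$ via \cref{thm:nofreeactionequidim}; the paper's one-line justification invokes \cref{thm:ObstructionskInvariants_integral}, whose hypothesis requires $n<m$, so your extra step is a genuine (if minor) improvement in rigor rather than a deviation in method.
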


It follows from \cref{lem:rlogr} that the number of pairs $(n,m)$ (as above) with $n,m\le r$ is in $O(r\log r)$. Since the number of pairs $(n,m)$ with $n,m\leq r$ grows like $r^2$, we obtain the following result:

\begin{corollary}\label{cor:percentage} For $r\geq 1$, the percentage of those pairs $(n,m)$ with $n,m\leq r$ such that there exists a finite, free $A_4$-CW-complex $X\simeq S^n\times S^m$ tends to zero as $r\to \infty$.
\end{corollary}

Another consequence of \cref{pro:Steenrod} is a slight generalization of a result by Blaszczyk \cite[Proposition~4.3]{blaszczyk2013free}.

\begin{proposition}\label{prop:generalizations1sn} The group $G=A_4$ can not act freely on a finite CW-complex $X$
with $H^* (X ; \bbF_2) \cong H^* (S^1 \times S^m ; \bbF_2)$ for any $m\geq 1$.   
\end{proposition}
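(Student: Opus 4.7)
The plan is to split into two cases according to whether $m=1$ or $m\geq 2$, then derive a contradiction in each case using previously established results.

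For $m=1$, we are in the equidimensional case $H^*(X;\bbF_2)\cong H^*(S^1\times S^1;\bbF_2)$, which is precisely ruled out by \cref{thm:nofreeactionequidim}. So the nontrivial case is $m\geq 2$, where we have $n=1<m$ and \cref{thm:ObstructionskInvariants} applies: a hypothetical free $A_4$-action provides a Steenrod closed parameter ideal $J=\langle \mu_1,\mu_2\rangle\subset H^*(BA_4;\bbF_2)$ with $|\mu_1|=2$ and $|\mu_2|=m+1$.

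The first substantive step is to observe that the classification \cref{thm:Intro-classification_steenrodclosedparameterideals} leaves only one candidate for $J$ when one generator has degree $2$. Indeed, the twisted ideals $\langle x_n,\Sq^1(x_n)\rangle$ for $n\ge 2$ have parameters of degree $\ge 6$, and every mixed ideal has both generators of degree $\ge 4$ since $|x_n^{2^m}|=2^m(2^{n+1}-2)\ge 4$ for $n,m\ge 1$. Among the fibered ideals $\langle v^k,u^l\rangle$, the only one containing a generator of degree $2$ is $\langle v,u\rangle$. Since $H^2(BA_4;\bbF_2)=\bbF_2\cdot u$, we must have $\mu_1=u$, and since $H^1(BA_4;\bbF_2)=0$, the class $\mu_2\in H^3(BA_4;\bbF_2)$ is uniquely determined modulo $\langle u\rangle$ to satisfy $\mu_2\equiv v$; in particular $m=2$.

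The final step uses \cref{pro:Steenrod} to rule out even this last case. In $H^*(S^1\times S^m;\bbF_2)\cong \bbF_2[t_1,t_2]/(t_1^2,t_2^2)$ we have $\Sq^1(t_1)=t_1^2=0$, so writing $\Sq^{m-n}(t_1)=t_2\lambda$ forces $\lambda=0$. \cref{pro:Steenrod} then gives $\Sq^1(\mu_1)\equiv 0\pmod{\langle\mu_1\rangle}$, i.e. $\Sq^1(u)=v\in\langle u\rangle$. But $v\notin\langle u\rangle$ since the degree-$3$ part of $\langle u\rangle$ equals $u\cdot H^1(BA_4;\bbF_2)=0$. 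This contradiction completes the proof.

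The main (minor) obstacle is the bookkeeping in step two: one must verify systematically that no twisted or mixed ideal from the classification has a parameter of degree $2$, which amounts to reading off the degree formulas $|x_n|=2^{n+1}-2$ and $|v^ix_n^{2^m}|,|x_{n+1}^{2^{m-1}}|\ge 4$ for the relevant parameter ranges. Everything else is a direct application of \cref{thm:nofreeactionequidim}, \cref{thm:ObstructionskInvariants}, and \cref{pro:Steenrod}.
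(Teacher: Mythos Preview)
Your proof is correct, but it takes an unnecessary detour through the full classification. The paper's proof is shorter and avoids \cref{thm:Intro-classification_steenrodclosedparameterideals} entirely: for $m\geq 2$ one simply notes that $H^2(BA_4;\bbF_2)=\bbF_2\cdot u$ forces $\mu_1=u$, and then the Kudo transgression principle (the mechanism behind \cref{pro:Steenrod}) with $\Sq^1$ gives $\Sq^1(u)\equiv 0\pmod{\langle u\rangle}$ directly from $\Sq^1(t_1)=t_1^2=0$, a contradiction since $\Sq^1(u)=v\notin\langle u\rangle$. This works uniformly for all $m\geq 2$ without ever determining $m$.

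By contrast, you first invoke the classification to force $J=\langle u,v\rangle$ and hence $m=2$, and only then apply \cref{pro:Steenrod} (whose literal statement is for $\Sq^{m-n}$, so you needed $m-n=1$). Your route is logically sound and the degree-checking of twisted and mixed ideals is fine, but it costs you the classification theorem as a black box. The paper's route shows that this proposition is really a direct consequence of the transgression argument alone, independent of the main classification; this is worth noting because the result generalizes Blaszczyk's and one would like it to stand on its own.
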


\begin{proof} The case $m=1$ holds by \cref{thm:nofreeactionequidim}. If $m\geq 2$, then the $k$-invariant $\mu_1$ must be equal to $u\in H^2 (BA_4; \bbF_2)$. Since $\Sq^1(t_1)=t_1^2=0$, we must have $\Sq^1(u)=0$ modulo the ideal $\langle u \rangle$. However $\Sq^1(u)=v$ is not in the ideal $\langle u \rangle$.
\end{proof}
 
Extending the above arguments we can say that a parameter ideal $\langle \mu_1, \mu_2 \rangle$ 
with $\Sq^{m-n}(\mu_1)=\mu_2$ can not be realized by a finite, free $A_4$-CW-complex $X$ homotopy equivalent to $S^n \times S^m$.

\begin{corollary}\label{cor:NonRealizablePairs}
Let $G=A_4$ and $\langle x, Sq^1(x) \rangle$ be a twisted ideal. For each $r\geq 0$, 
the ideals of the form $\langle x^{2^r} ,\Sq^1(x)^{2^r}\rangle$  
are not realizable by a finite, free $G$-CW-complex $X$ homotopy equivalent to $S^n \times S^m$.
\end{corollary}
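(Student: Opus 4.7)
The plan is to mimic the proof of \cref{thm:ObstructionskInvariants_integral}, replacing the input $\Sq^1(t_1)=0$ by the stronger vanishing $\Sq^{m-n}(t_1)=0$ that is now available because $X$ is homotopy equivalent to a product of spheres rather than merely having the integral cohomology of one. The essential idea, foreshadowed in the paragraph preceding the corollary, is that realizability forces $\Sq^{m-n}(\mu_1)\in\langle\mu_1\rangle$, while by the very definition of our ideal, $\Sq^{m-n}(\mu_1)=\mu_2$.

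First I would set up notation. If such a free $A_4$-action exists, then $n\neq m$ by \cref{thm:nofreeactionequidim}, so take $n<m$. By \cref{pro:ForA_4F_2} the realized ideal equals $\langle\mu_1,\mu_2\rangle$ for the two $k$-invariants, with $|\mu_1|=n+1<m+1=|\mu_2|$. Since the generators $x^{2^r}$ and $\Sq^1(x)^{2^r}$ of the given ideal have distinct degrees $2^r|x|$ and $2^r(|x|+1)$, \cref{lem:lowergeneratorunique} identifies the lower-degree parameter uniquely as $\mu_1=x^{2^r}$, so that $m-n=2^r$.

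Next I would pin down the coefficient $\lambda$ appearing in \cref{pro:Steenrod}. The homotopy equivalence $X\simeq S^n\times S^m$ induces an isomorphism of mod-$2$ cohomology algebras over the Steenrod algebra, so the generator $t_1\in H^n(X;\bbF_2)$ corresponds to $s_n\otimes 1$, and $\Sq^i(s_n)\in H^{n+i}(S^n;\bbF_2)=0$ for every $i>0$. Taking $i=m-n$ gives $\Sq^{m-n}(t_1)=0$, so one may take $\lambda=0$ in the proposition, which then yields $\Sq^{m-n}(\mu_1)\in\langle\mu_1\rangle$.

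Finally I would derive the contradiction. The standard identity $\Sq^{2^s}(y^{2^s})=(\Sq^1 y)^{2^s}$, applied with $s=r$ and $y=x$, gives
\[
\Sq^{m-n}(\mu_1)\;=\;\Sq^{2^r}(x^{2^r})\;=\;(\Sq^1 x)^{2^r}\;=\;\mu_2
\]
in $H^*(BA_4;\bbF_2)$, so $\mu_2\in\langle\mu_1\rangle$. But $\{\mu_1,\mu_2\}$ is a system of parameters, hence $\mu_1$ and $\mu_2$ are coprime by \cref{lem:Coprime}, and since $\mu_1$ has positive degree it is not a unit, ruling out $\mu_1\mid\mu_2$. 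The only non-formal step is the identification $\lambda=0$; this is precisely where the homotopy-equivalence hypothesis enters decisively, and it is what extends the $r=0$ case of \cref{thm:ObstructionskInvariants_integral} to all $r\ge 0$ uniformly.
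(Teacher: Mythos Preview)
Your proof is correct and follows essentially the same route as the paper's, only spelled out in full detail: the paper's one-line proof cites \cref{pro:Steenrod} together with the identity $\Sq^{2^r}(x^{2^r})=(\Sq^1 x)^{2^r}$, relying on the preceding paragraph for the observation that $\Sq^{m-n}(t_1)=0$ when $X\simeq S^n\times S^m$. One small imprecision: you write $(\Sq^1 x)^{2^r}=\mu_2$ in $H^*(BA_4;\bbF_2)$, but $\mu_2$ is only determined modulo $\langle\mu_1\rangle$; however this does not affect the conclusion, since $(\Sq^1 x)^{2^r}\in\langle x^{2^r}\rangle$ already contradicts the coprimality of $x^{2^r}$ and $(\Sq^1 x)^{2^r}$ directly.
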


\begin{proof} The statement follows from \cref{pro:Steenrod} since
$\Sq^{2^r} (x^{2^r} )= \Sq^1(x)^{2^r}$.
\end{proof}
In particular by \cref{cor:NonRealizablePairs}, the fibered pairs $(v^{2^r},u^{2^r})$ are not realizable 
by a finite, free $A_4$-CW-complex $X$ homotopy equivalent to $S^n \times S^m$.

\newpage
\appendix
\section{Table of Steenrod closed parameter ideals}\label{sec:appendix}
The following table visualizes the degrees of parameters for fibered, twisted and mixed Steenrod closed parameter ideals.
\tikzstyle{twisted}=[diamond,draw,minimum size=10,scale=0.6]
\tikzstyle{mixed}=[circle,draw,minimum size=5,scale=0.8]
\tikzstyle{fibered}=[rectangle,draw,minimum size=5]
\tikzstyle{notrealizable}=[fill=gray!255]
\tikzstyle{realizable}=[fill=gray!10]
\tikzstyle{realizableunclear}=[fill=gray!48]
\begin{figure}[H]
\centering
\begin{tikzpicture}
\newcommand{\drawdot}[3]
  {\node[#1,scale=0.6]  (#1#2#3) at ({max(.2*#2,.2*#3)},{min(.2*#2,.2*#3)}) {};}
\newcommand{\drawcoord}[1]
  {\node  (coord#1) at (0,.2*#1) {#1};
  \node  (coord#1) at (.2*#1,0) {#1};
  \draw [dotted] (.2,.2*#1) -- (.2*60,.2*#1);
  \draw [dotted] (.2*#1,.2) -- (.2*#1,60*.2);}
\newcommand{\fibered}[2]{\drawdot{fibered}{2*#1}{3*#2}}
\newcommand{\twisted}[1]{\drawdot{twisted}{2^#1-1}{2^#1-2}}
\newcommand{\mixed}[3]{\drawdot{mixed}{3*#1+(2^#3-2)*2^#2}{(2^#3-1)*2^#2}}
\drawdot{fibered,realizable}{2}{3}
\drawdot{fibered,realizable}{4}{3}
\drawdot{fibered,realizable}{4}{6}
\drawdot{fibered,realizable}{6}{3}
\drawdot{fibered,realizable}{8}{3}
\drawdot{fibered,realizable}{8}{6}
\drawdot{fibered,realizable}{8}{9}
\drawdot{fibered,realizable}{8}{12}
\drawdot{fibered,realizable}{10}{3}
\drawdot{fibered,realizable}{12}{3}
\drawdot{fibered,realizable}{12}{6}
\drawdot{fibered,realizable}{14}{3}
\drawdot{fibered,realizable}{16}{3}
\drawdot{fibered,realizable}{16}{6}
\drawdot{fibered,realizable}{16}{9}
\drawdot{fibered,realizable}{16}{12}
\drawdot{fibered,realizable}{16}{15}
\drawdot{fibered,realizable}{16}{18}
\drawdot{fibered,realizable}{16}{21}
\drawdot{fibered,realizable}{16}{24}
\drawdot{fibered,realizable}{18}{3}
\drawdot{fibered,realizable}{20}{3}
\drawdot{fibered,realizable}{20}{6}
\drawdot{fibered,realizable}{22}{3}
\drawdot{fibered,realizable}{24}{3}
\drawdot{fibered,realizable}{24}{6}
\drawdot{fibered,realizable}{24}{9}
\drawdot{fibered,realizable}{24}{12}
\drawdot{fibered,realizable}{26}{3}
\drawdot{fibered,realizable}{28}{3}
\drawdot{fibered,realizable}{28}{6}
\drawdot{fibered,realizable}{30}{3}
\drawdot{fibered,realizable}{32}{3}
\drawdot{fibered,realizable}{32}{6}
\drawdot{fibered,realizable}{32}{9}
\drawdot{fibered,realizable}{32}{12}
\drawdot{fibered,realizable}{32}{15}
\drawdot{fibered,realizable}{32}{18}
\drawdot{fibered,realizable}{32}{21}
\drawdot{fibered,realizable}{32}{24}
\drawdot{fibered,realizable}{32}{27}
\drawdot{fibered,realizable}{32}{30}
\drawdot{fibered,realizable}{32}{33}
\drawdot{fibered,realizable}{32}{36}
\drawdot{fibered,realizable}{32}{39}
\drawdot{fibered,realizable}{32}{42}
\drawdot{fibered,realizable}{32}{45}
\drawdot{fibered,realizable}{32}{48}
\drawdot{fibered,realizable}{34}{3}
\drawdot{fibered,realizable}{36}{3}
\drawdot{fibered,realizable}{36}{6}
\drawdot{fibered,realizable}{38}{3}
\drawdot{fibered,realizable}{40}{3}
\drawdot{fibered,realizable}{40}{6}
\drawdot{fibered,realizable}{40}{9}
\drawdot{fibered,realizable}{40}{12}
\drawdot{fibered,realizable}{42}{3}
\drawdot{fibered,realizable}{44}{3}
\drawdot{fibered,realizable}{44}{6}
\drawdot{fibered,realizable}{46}{3}
\drawdot{fibered,realizable}{48}{3}
\drawdot{fibered,realizable}{48}{6}
\drawdot{fibered,realizable}{48}{9}
\drawdot{fibered,realizable}{48}{12}
\drawdot{fibered,realizable}{48}{15}
\drawdot{fibered,realizable}{48}{18}
\drawdot{fibered,realizable}{48}{21}
\drawdot{fibered,realizable}{48}{24}
\drawdot{fibered,realizable}{50}{3}
\drawdot{fibered,realizable}{52}{3}
\drawdot{fibered,realizable}{52}{6}
\drawdot{fibered,realizable}{54}{3}
\drawdot{fibered,realizable}{56}{3}
\drawdot{fibered,realizable}{56}{6}
\drawdot{fibered,realizable}{56}{9}
\drawdot{fibered,realizable}{56}{12}
\drawdot{fibered,realizable}{58}{3}
\drawdot{fibered,realizable}{60}{3}
\drawdot{fibered,realizable}{60}{6}
\drawdot{twisted,realizable}{2}{3}
\drawdot{twisted,realizable}{6}{7}
\drawdot{twisted,realizable}{14}{15}
\drawdot{twisted,realizable}{30}{31}
\drawdot{mixed,realizable}{11}{12}
\drawdot{mixed,realizable}{19}{24}
\drawdot{mixed,realizable}{22}{24}
\drawdot{mixed,realizable}{25}{24}
\drawdot{mixed,realizable}{35}{48}
\drawdot{mixed,realizable}{38}{48}
\drawdot{mixed,realizable}{41}{48}
\drawdot{mixed,realizable}{44}{48}
\drawdot{mixed,realizable}{47}{48}
\drawdot{mixed,realizable}{50}{48}
\drawdot{mixed,realizable}{53}{48}
\drawdot{mixed,realizable}{12}{14}
\drawdot{mixed,realizable}{24}{28}
\drawdot{mixed,realizable}{27}{28}
\drawdot{mixed,realizable}{48}{56}
\drawdot{mixed,realizable}{51}{56}
\drawdot{mixed,realizable}{54}{56}
\drawdot{mixed,realizable}{57}{56}
\drawdot{mixed,realizable}{28}{30}
\drawdot{mixed,realizable}{56}{60}
\drawdot{mixed,realizable}{59}{60}
\node  (coord0) at (0,0) {0};
\drawcoord{5}
\drawcoord{10}
\drawcoord{15}
\drawcoord{20}
\drawcoord{25}
\drawcoord{30}
\drawcoord{35}
\drawcoord{40}
\drawcoord{45}
\drawcoord{50}
\drawcoord{55}
\drawcoord{60}
\matrix[draw,below left,fill=white,shift={(-3,-0.3)}] at (current bounding box.north) {
  \node [fibered,label={right:fibered}] {}; \\
  \node [twisted,label={right:twisted}] {}; \\
  \node [mixed,label={right:mixed}] {}; \\
};
\end{tikzpicture}
\caption{Steenrod closed parameter ideals with parameters of degrees at most $60$.} \label{fig:firstIdeals2}
\end{figure}

\providecommand{\bysame}{\leavevmode\hbox to3em{\hrulefill}\thinspace}
\providecommand{\MR}{\relax\ifhmode\unskip\space\fi MR }
\providecommand{\MRhref}[2]{%
  \href{http://www.ams.org/mathscinet-getitem?mr=#1}{#2}
}

\end{document}